\documentclass[11pt]{article}
\usepackage[margin=1.4in]{geometry}
\usepackage{graphicx} 
\usepackage{amsfonts, amssymb, amsthm, amsmath}
\usepackage{mathtools}
\usepackage{bm}
\usepackage{physics} 
\usepackage[hidelinks]{hyperref}
\usepackage{mathtools}
\usepackage{xcolor}
\usepackage{todonotes}
\usepackage{float}
\usepackage{placeins}
\usepackage{etoolbox}
\usepackage{authblk}

\newtheorem{thm}{Theorem}
\newtheorem*{thm*}{Theorem}
\newtheorem{prop}{Proposition}[section]
\newtheorem{lemma}{Lemma}[subsection]

\theoremstyle{definition}

\theoremstyle{remark}

\newcommand{\inner}[2]{\langle {#1}, {#2} \rangle}
\newcommand{\R}{\mathbb{R}}

\newcommand{\T}{\mathbb{T}}

\BeforeBeginEnvironment{thm}{\FloatBarrier}
\AfterEndEnvironment{thm}{\FloatBarrier}

\allowdisplaybreaks

\title{Mixing and enhanced dissipation in a time-translating shear flow}
\author[1]{Johannes Benthaus}
\author[2]{Giuseppe Maria Coclite}
\author[1]{Camilla Nobili}
\affil[1]{School of Mathematics and Physics, University of Surrey} 
\affil[2]{Dipartimento di Meccanica, Matematica e Management, Politecnico di Bari}

\begin{document}

\maketitle

\begin{abstract}
Motivated in part by the work of Vanneste and Byatt-Smith, we study mixing and enhanced dissipation for the advection-diffusion equation with velocity field \[\mathbf{u}(x,y,t)=(\sin(y-ct),0),\] a shear flow whose profile translates rigidly with speed $c$. This is a prototypical example of a flow whose critical points move in time. We quantify how the decay properties depend on the relation between translation speed $c$ and diffusivity $\nu$.
We first analyse the inviscid transport problem and establish time-averaged $H^{-1}$ mixing estimates for $t\lesssim c^{-1}$, yielding decay rates faster than stationary estimates.
Building on these estimates, we prove enhanced dissipation for moderate translation speeds $c=c_0\nu^\ell$ with $\ell\in(1/3,3/4)$. In this regime we obtain decay at rate
$\nu^{(1+2\ell)/5}$, which interpolates continuously between the sharp rates
$\nu^{1/2}$ for stationary shear flows with simple critical points and $\nu^{1/3}$ for monotone flows. This quantifies how increasing translation speed progressively weakens the influence of the critical points. Comparing the inviscid mixing and enhanced dissipation timescales heuristically explains the lower endpoint $\ell=1/3$.
For $c\gg 1$, we show that solutions remain close to those of the heat equation on fixed time intervals, such that the rapid translation averages out advection and weakens mixing.
The mixing estimate relies on a refined stationary phase analysis exploiting cancellations generated by the motion of the critical points. The enhanced dissipation result requires an adaptation of the hypocoercivity framework for stationary shear flows to the non-autonomous setting. The translating flow prevents the commutator hierarchy from closing in the standard way, which we overcome by constructing an extended energy functional. The large-$c$ analysis exploits the averaging effect of the rapid translation in this regime.
\end{abstract}

\newpage

\section{Introduction}
We introduce the passive-scalar problem on the set $\Omega = \mathbb{T}^2 \times [0,\infty)$. Let
the concentration $\Theta = \Theta(x,y,t) : \Omega \to \R$ and incompressible velocity field 
$\bm{u} = \bm{u}(x,y,t) : \Omega \to \R^2$ solve the Cauchy problem for the advection--diffusion equation
\begin{equation}\label{adv}
  \begin{array}{rrl}
    \partial_t \Theta + \bm{u}\cdot\nabla \Theta - \nu \Delta \Theta &=& 0,\\
    \nabla\cdot \bm{u} &=& 0,\\
    \Theta(x,y,0) &=& \Theta_0(x,y),
  \end{array}
\end{equation}
where $\nu \ll 1$ is the molecular diffusivity and $\Theta_0$ is the initial datum, which we assume to be mean free in $x$, i.e.
\begin{align}
\label{mean_free}
\int_\T \Theta_0(x, y) \, dx = 0.
\end{align}
In this paper we investigate enhanced dissipation and the associated mixing mechanisms for a prototypical example within the class of time-dependent shear flows,
\begin{equation}
\label{shear_flow}
  \bm{u}(x,y,t) = (v(y,t), 0)^{\mathsf T}.
\end{equation}

Enhanced dissipation is a physical phenomenon intrinsically connected
with mixing. The mechanism relies on advection generating increasingly fine spatial gradients, 
on which diffusion acts more efficiently, leading to a decay rate faster than that produced by diffusion alone.
This interplay between advection and diffusion has been widely studied
in the fluid-dynamics literature, and we therefore restrict ourselves
here to highlighting a few contributions most relevant to our work.
Batchelor \cite{batchelor1959} developed the foundational theory of
small-scale scalar fluctuations. Pierrehumbert
\cite{pierrehumbert1994tracer} subsequently identified, through numerical
simulations of alternating shear maps, what he termed \emph{strange
eigenmodes}: persistent spatial patterns that govern the long-time decay
of the scalar variance. The spectral mechanisms underlying such behaviour
were further investigated by Reddy \& Trefethen \cite{reddy1994}, who
emphasised the role of non-normality (arising from the
non-self-adjointness of the advection--diffusion operator) in transient
growth, and by Giona et al.~\cite{giona2004}, who analysed the
eigenvalue structure for steady flows. The existence of strange
eigenmodes was later established rigorously by Liu \& Haller
\cite{liu2004} using inertial manifold theory. For a more complete
overview we refer to the survey \cite{thiffeault2012} and references
therein.

The rigorous mathematical treatment of enhanced dissipation with the aim of obtaining quantitative decay rates for the whole time-evolution is more
recent.  For autonomous shear flows, the theory is by now
well-established, and we refer to the surveys \cite{cotizelati2024,mazzucato2026} for
a comprehensive overview. 
A key result, first established by Coti Zelati \& Bedrossian
\cite{Bedrossian2017} and later extended by Coti Zelati \& Gallay
\cite{cotizelatigallay}, shows that the enhanced dissipation timescale is
$\sim\nu^{m/(m+2)}$, which is known to be sharp
\cite{Zelati2021}. Here $m$ denotes the maximal order of degeneracy of
the critical points of the shear profile, the points where the velocity
gradient vanishes and the shear can no longer generate fine-scale
structure. The degeneracy of these critical points is thus the
determining factor for the enhanced dissipation rate.

Although physically relevant, the non-autonomous setting, that is when
$\mathbf{u}$ depends on time, has been explored far less, largely due
to the additional analytical difficulties it entails. We mention in
passing the related results in stochastic settings
\cite{Bedrossian2021,cooperman2025,Gess2025,Seis2025}, where the time
dependence arises through the randomness of the velocity field. In the
deterministic setting, Coble \& He \cite{Coble2024} considered a
perturbative regime in which the rate of change of the shear profile is
$\mathcal{O}(\nu^{3/4})$ and showed that the enhanced
dissipation rate of the stationary problem is preserved. In previous work \cite{benthaus2026}, some of the present authors studied the case of a separable shear flow profile $v(y,t) = \xi(t)\, w(y)$ and showed that the enhanced
dissipation timescale depends directly on the time integral of the
modulation function~$\xi(t)$, and need not match the stationary rate. In
particular, suitable choices of $\xi(t)$ can produce dissipation rates
that are strictly faster than those associated with the corresponding
stationary flow. Further, Elgindi et al.~\cite{elgindi2025} established
enhanced dissipation for a flow that alternates between two stationary
profiles. Notably, in each of these settings the critical points of the velocity profile either remain fixed in time or do not play a direct role in the mechanism driving the improved decay. 
While there exist constructions of time-dependent flows with moving
critical points that yield faster decay rates~\cite{he2025}, the
precise dependence of the enhanced dissipation rate on the translation
speed has not been determined.
In contrast, in the present work the motion of the critical points induced by the time translation is precisely the mechanism generating the enhanced mixing and dissipation, and we provide a quantitative description of this effect.

\medskip
In this paper we are interested in the vector field \eqref{shear_flow} with
\begin{equation}
\label{sine_flow}
  v(y,t) = \alpha \sin (y - c t), \qquad \alpha, c \in \R_+.
\end{equation}

The motivation for studying this flow stems from the work of Vanneste \& Byatt-Smith \cite{Vanneste2007},
 where asymptotic and numerical analysis show that a translating shear flow with 
 speed $c\sim\nu^{1/2}$ enters a distinguished regime in which the principal 
 eigenvalue retains an $\mathcal{O}(1)$ real part as $\nu\to0$, formally yielding an 
 $\mathcal{O}(1)$ long-time decay rate. 
Crucially, however, the same study demonstrates that this fast decay is not robust. 
First, it governs only the ultimate asymptotic regime and is preceded by a much 
slower transient of duration $\sim\nu^{-1/2}$.
Second, the advection--diffusion 
operator is strongly non-normal, meaning that its eigenfunctions are highly non-orthogonal 
and small perturbations---such as numerical round-off or weak physical noise---can excite
 transient structures that are not true eigenmodes. These so-called \emph{pseudomodes}, 
 which are approximate eigenfunctions with much smaller decay rates, dominate the dynamics 
 and reduce the effective decay to $\sim\nu^{2/5}$.

Taken together, these observations reveal a significant gap between the asymptotic and numerical
 picture on the one hand and the existing rigorous quantitative theory on the other, 
 thereby motivating several questions. 
We begin with the inviscid problem, asking how much stronger
the mixing mechanism becomes when the critical points translate 
through the domain. This is motivated by the connection
between mixing and enhanced dissipation established by Coti Zelati
\emph{et al.}~\cite{cotizelati2019}, which shows that quantitative
mixing in the inviscid problem implies enhanced dissipation in the
viscous setting. As we shall see, a comparison between the inviscid mixing timescale and the enhanced dissipation timescale heuristically explains the range of translation speeds for which decay beyond the stationary rate can occur.
 We then turn to the viscous dynamics at moderate translation speeds, asking in particular whether 
 enhanced dissipation---or even decay faster than in the stationary setting---can persist beyond 
 the perturbative regime suggested by the asymptotic and spectral analysis of \cite{Vanneste2007},
  namely for translation speeds larger than $c \gtrsim \nu^{3/4}$. Finally, we consider the opposite 
  limit of very large translation speeds $c \gg 1$, where mixing is expected to deteriorate, 
  and investigate what decay properties, if any, remain in this regime.

The present paper provides quantitative results addressing each of these regimes. 
We establish three main theorems: the first concerns inviscid mixing, while the 
second and third quantify viscous decay in two distinct translation regimes.

Before presenting our new results, we recall that in problems of mixing and enhanced dissipation,
sharp estimates are often obtained in Fourier variables, where transport redistributes energy 
across modes while diffusion acts directly on frequencies. 
For this reason, two of our main results are formulated and proved after taking the Fourier 
transform in the horizontal direction.

Since the shear depends only on $y$, we expand $\Theta$ in Fourier modes in $x$, reducing the 
problem to a family of one–dimensional equations in $y$, one for each wavenumber $k\in\mathbb{Z}$.
 Our analysis is therefore carried out mode by mode. Writing $\widehat{\Theta}(k,y,t)$ for the
  $k$-th Fourier coefficient, the equation becomes
\begin{equation}
\label{adv_fourier}
  \partial_t \widehat{\Theta}(k,y,t) + i \alpha k \sin(y-ct) \widehat{\Theta}(k,y,t) + \nu k^2 \widehat{\Theta}(k,y,t) - \nu \partial_y^2 \widehat{\Theta}(k,y,t) = 0.
\end{equation}

By contrast, the third result concerns long-time behaviour in the rapidly translating regime, where the dynamics becomes diffusion-dominated. In that setting the Fourier decomposition is no longer essential, and the analysis is carried out directly in physical space.

 \paragraph{Time-averaged inviscid mixing.}

We aim to establish a quantitative, time-averaged mixing estimate for the inviscid 
transport problem ($\nu=0$) in \eqref{adv_fourier}. 
Since the profile $v(y,t)=\sin(y-ct)$ is periodic in time with period $2\pi/c$, 
solutions of the associated transport equation inherit this periodicity and therefore 
cannot exhibit mixing at a uniform rate for all times. Any mixing mechanism must
 therefore act on a bounded timescale. 
In particular, if the translation is to produce enhanced dissipation beyond the 
stationary rate, this mechanism must be quantitatively stronger than in the autonomous case.
\begin{figure}[H]
    \centering
    \includegraphics[width=0.9\textwidth]{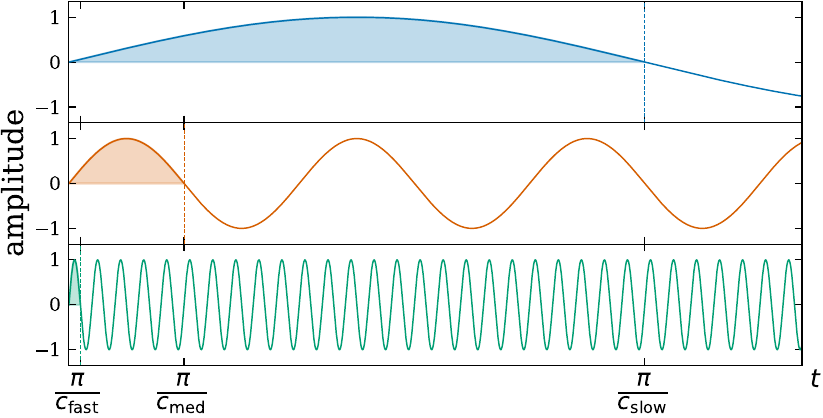}
    \caption{
    Illustration of the time–dependent shear $t \mapsto \sin(y-ct)$ at fixed spatial location $y$ for three different $c$. 
    The shaded regions corresponds to the first sign–coherent intervals
    during which phase gradients accumulate and mixing strengthens. 
    Once the shear changes sign, the dynamics begins to unwind previously generated gradients, 
    explaining why the mixing estimate is naturally restricted to times of order $c^{-1}$.
    }
    \label{fig:mixing_window}
\end{figure}
Motivated by this constraint, we introduce a time-averaged $H^{-1}$ mixing measure
    \begin{align}
    \label{mix:M:H_minus}
\biggl\|
    \frac{1}{T}\int_1^T \widehat{\Theta}(k,\cdot,t)\,dt
    \biggr\|_{H_y^{-1}}
:=\sup_{\eta\in H^1_y,\;\norm{\eta}_{H_{y}^1}=1}\abs{
    \frac{1}{T}\int_1^T\int_{\T}\widehat{\Theta}(k,y,t)\overline{\eta(y)}\,dy\, dt}.
\end{align}
and prove the following theorem for the Fourier coefficients 
    $\widehat{\Theta}(k,y,t)$ of $\Theta$:

    \begin{thm}
  \label{mix:thm:time-avg-mixing}
    Let $0<c\leq 1$ and $\widehat{\Theta}$ solve \eqref{adv_fourier} with $\nu=0$ and
    initial datum $\widehat{\Theta}_0\in H^1_y(\T)$. 
    Then there exists a
    constant $C>0$, independent of $c$ and $T$, such that for every
    $k\in\mathbb Z\setminus\{0\}$ and every $T\in (1,\frac{\pi}{c}]$,
    \begin{equation}\label{int:eq:time-avg-prop}
    \biggl\|
    \frac{1}{T}\int_1^T \widehat{\Theta}(k,\cdot,t)\,dt
    \biggr\|_{H_y^{-1}}
     \;\le\;
     C\,\frac{1}{T}
     \biggl(\frac{(\ln T)^2}{c\,|\alpha k|^2}\biggr)^{\!1/3}
     \,\|\widehat{\Theta}_0(k,\cdot)\|_{H_y^{1}}.
\end{equation}
\end{thm}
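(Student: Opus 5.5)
We work with the explicit inviscid solution. With $\nu=0$, \eqref{adv_fourier} integrates exactly:
\[
\widehat{\Theta}(k,y,t)=\widehat{\Theta}_0(k,y)\,e^{-i\alpha k\,\Phi(y,t)},\qquad
\Phi(y,t)=\int_0^t\sin(y-cs)\,ds=\frac{\cos(y-ct)-\cos y}{c}.
\]
Fix a test function $\eta\in H^1_y$ with $\|\eta\|_{H^1}=1$ and set $g=\widehat{\Theta}_0\bar\eta$. Since $\T$ is one-dimensional, $\|g\|_{W^{1,1}}\lesssim\|\widehat{\Theta}_0\|_{H^1}$ and $\|g\|_{L^\infty}\lesssim\|\widehat{\Theta}_0\|_{H^1}$. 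The quantity to bound is
\[
I=\frac1T\int_{\T}g(y)\int_1^T e^{-i\alpha k\Phi(y,t)}\,dt\,dy .
\]
The plan is to estimate the inner time integral pointwise in $y$, where it is an oscillatory integral in $t$. Its phase derivative is $\partial_t\Phi=\sin(y-ct)$, which vanishes only at $t$ with $y-ct\in\pi\mathbb Z$. For $t\in[1,T]$ with $T\le\pi/c$, the quantity $ct$ ranges over an interval of length at most $\pi$, so each $y$ meets at most one such critical time.

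\emph{Time integral, pointwise in $y$.} Write $\lambda=|\alpha k|$ and $\delta(y,t)=\mathrm{dist}(y-ct,\pi\mathbb Z)$. Split $[1,T]$ into the set where $\delta\le\epsilon$ and its complement.
\begin{itemize}
\item Near the critical time, the set $\{\delta\le\epsilon\}$ has time length $\lesssim\epsilon/c$. Bound this piece trivially by $\min(T,\epsilon/c)$, or by van der Corput with second derivative $|\partial_t^2\Phi|=c|\cos(y-ct)|\approx c$ there, giving $\lesssim(\lambda c)^{-1/2}$.
\item Away from it, integrate by parts once in $t$. This gives $\lesssim \lambda^{-1}\bigl(\delta^{-1}+\int|\partial_t(1/\sin)|\bigr)\lesssim(\lambda\min\delta)^{-1}$, with $\min\delta$ taken over the remaining interval. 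The non-stationary integral is therefore bounded by $\lambda^{-1}\min(\epsilon,\delta(y,1),\delta(y,T))^{-1}$.
\end{itemize}
The endpoint contributions at $t=1$ and $t=T$ are where the $y$-dependence enters. They are large only when $y-c$ or $y-cT$ lies near $\pi\mathbb Z$.

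\emph{Integration in $y$ and optimization.} Integrate the pointwise bound against $|g|\le\|g\|_{L^\infty}$. The endpoint terms $\lambda^{-1}\delta(y,1)^{-1}$ produce a logarithmically divergent $y$-integral. Cut it off at scale $\delta\gtrsim\epsilon$ and treat the region $\delta<\epsilon$ trivially, with measure $\epsilon$ times $T$. The resulting total has the schematic form
\[
T\,|I|\lesssim \Bigl(\frac{\epsilon}{c}\wedge T\Bigr)+\frac{\ln(1/\epsilon)}{\lambda\epsilon}\cdots .
\]
The phase $\Phi$ itself is stationary in $y$ at critical points of $\cos(y-ct)-\cos y$, i.e.\ at $y=ct/2+\pi/2+\pi\mathbb Z$, with $\partial_y^2\Phi\sim\sin(ct/2)/c\sim t$. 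Exploiting this, the $y$-integral, taken before or together with the $t$-integral, yields the $\int_1^T(\lambda t)^{-1/2}$-type and $\log$ gains. Balancing the cutoff $\epsilon$ against these terms is what produces the exponent $1/3$. Concretely, I would choose $\epsilon\sim(c\ln^2T/\lambda^{2})^{1/3}$-type scalings so that all contributions are $\lesssim (\ln^2T/(c\lambda^2))^{1/3}$. Dividing by $T$ then gives \eqref{int:eq:time-avg-prop}.

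\emph{Main obstacle.} The delicate step is handling the two-dimensional oscillatory integral in $(y,t)$ near the moving critical set $y=ct+\pi\mathbb Z$. There the naive one-variable stationary-phase bounds lose powers of $c$, so I expect to do the following:
\begin{itemize}
\item perform the $t$-integration by parts away from the critical curve;
\item near the curve, change variables to $\tau=y-ct$ and integrate by parts in $y$ instead, using $\partial_y\Phi=(\sin y-\sin(y-ct))/c=2\cos(y-ct/2)\sin(ct/2)/c$, which has size $\gtrsim t$ away from $y=ct/2+\pi/2$;
\item bound the derivative of $g$ through $\|g\|_{W^{1,1}}$, which is why $H^1$ data are needed.
\end{itemize}
The cancellation between the regions where the roles of $t$- and $y$-oscillation switch is where the $(\ln T)^2$ appears. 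Getting exactly the power $1/3$ with only logarithmic loss requires care in choosing the cutoff scales jointly in the two variables.
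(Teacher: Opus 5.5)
\textbf{Verdict: there is a genuine gap.} The tools you list under ``Main obstacle'' are the right ones: $t$-integration by parts away from the curve $y-ct\in\pi\mathbb Z$, $y$-integration by parts away from $y-ct/2\in\frac{\pi}{2}+\pi\mathbb Z$, and $W^{1,1}$ control of $g$. But the proposal never supplies the estimate that produces the exponent $1/3$, and the computation you actually carry out cannot produce it. Consider the $y$ whose critical time lies in $(1,T)$; this is a set of measure $\sim c(T-1)$, hence of order one for $T\sim 1/c$. For such $y$ the inner time integral has a nondegenerate stationary point with $|\partial_t^2\Phi|\approx c$. So any argument that bounds $|\int_1^T e^{-i\alpha k\Phi}\,dt|$ for each $y$ and then integrates against $|g|$ gives at best $T|I|\lesssim (c\lambda)^{-1/2}$. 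This is weaker than $(c\lambda^2)^{-1/3}$ whenever $\lambda>c$. Consistently, your concrete choice $\epsilon\sim(c\ln^2T/\lambda^2)^{1/3}$ makes the trivially bounded term $\epsilon/c\sim(\ln^2T/(c^2\lambda^2))^{1/3}$, a factor $c^{-1/3}$ above the target. Meanwhile $1/(\lambda\epsilon)$ exceeds the target once $\lambda\gg(\ln T)^4$, so the asserted balancing does not occur. Stationary phase in $y$ does not help as you describe either: $(\lambda t)^{-1/2}$ integrated over $[1,T]$ gives $(T/\lambda)^{1/2}$, which is only the stationary $T^{-1/2}$ rate.

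\textbf{What is missing} is a second, independent cutoff together with a measure bound on the doubly degenerate set. Let $\delta$ control $|\cos(y-ct/2)|$ and $\epsilon$ control $|\sin(y-ct)|$.

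\emph{Region $|\cos(y-ct/2)|\ge\delta$.} Integrate by parts in $y$ at each fixed $t$. Since $\sin(ct/2)\ge ct/\pi$ for $T\le\pi/c$, one has $|\partial_y\Phi|\ge 2t\delta/\pi$. The cost is therefore $\lesssim(\lambda t\delta)^{-1}\|\widehat{\Theta}_0\|_{H^1}$, and integrating over $t\in[1,T]$ gives $\ln T/(\lambda\delta)$. This is the source of the logarithm, not a cancellation between regions.

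\emph{Region $|\cos(y-ct/2)|<\delta$, $|\sin(y-ct)|\ge\epsilon$.} Integrate by parts in $t$ for each $y$, at cost $\lesssim\lambda^{-1}(\epsilon^{-1}+\delta\epsilon^{-2})$.

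\emph{Doubly degenerate region.} Only the set where both quantities are small is estimated trivially. The decisive fact is that the two critical curves move with different speeds, $c/2$ and $c$, and hence cross transversally. The change of variables $(u,v)=(ct/2-y,\,y-ct)$, with Jacobian $c/2$, shows that this set has space-time measure $\lesssim\delta\epsilon/c$. That is a product of two small parameters, rather than the $\epsilon/c$ in your bound.

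\emph{Optimisation.} Taking $\delta\sim\epsilon$ gives $T|I|\lesssim(\ln T/(\lambda\delta)+\delta^2/c)\|\widehat{\Theta}_0\|_{H^1}$ (for $\ln T\gtrsim 1$). Choosing $\delta\sim(c\ln T/\lambda)^{1/3}$ then gives the theorem.

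A minor point: once the $\epsilon$-neighbourhood of the critical time is removed, every boundary term of your $t$-integration by parts is at most $1/\epsilon$ uniformly in $y$. So the logarithmically divergent $y$-integral you describe does not arise.
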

  
  Estimate \eqref{int:eq:time-avg-prop} controls $H^{-1}-$norm of the \textit{time-averaged solution}, thereby capturing cancellations generated by the temporal oscillation of the shear. In contrast, stationary mixing estimates only control the time average of instantaneous norms. Since \[ \biggl\|
    \frac{1}{T}\int_1^T \widehat{\Theta}(k,\cdot,t)\,dt
    \biggr\|_{H_y^{-1}}\leq \frac{1}{T}\int_1^T\norm{\widehat{\Theta}(k,\cdot,t)}_{H^{-1}_y} dt \]
    such stationary estimates (expoiting that the shear is bounded in $x$ and $t$) would yield at best a $T^{-\frac 12}$ decay. The moving critical points of the shear therefore produce a genuinely stronger effect, leading to the almost 
    $T^{-1}$ decay (up to logarithms) in \eqref{int:eq:time-avg-prop}.

\medskip
   Finally, we remark that the mixing mechanism exploited here operates while the shear preserves a coherent sign.
For a fixed spatial location $y$, the shear $t \mapsto \sin(y-ct)$ maintains a constant sign only over time intervals of length $\pi/c$. During such intervals, phase gradients accumulate and mixing strengthens. Once the shear changes sign, the transport begins to unwind these gradients, so the mixing is no longer monotone. This explains why the estimate is naturally restricted to times $T \lesssim c^{-1}$.

    We prove this result via the method of stationary phase applied in both space and
    time, where a crucial step is the construction of a suitable space-time partition
    isolating the moving critical points.

    \begin{figure}
  \centering
  \includegraphics[width=0.9\textwidth]{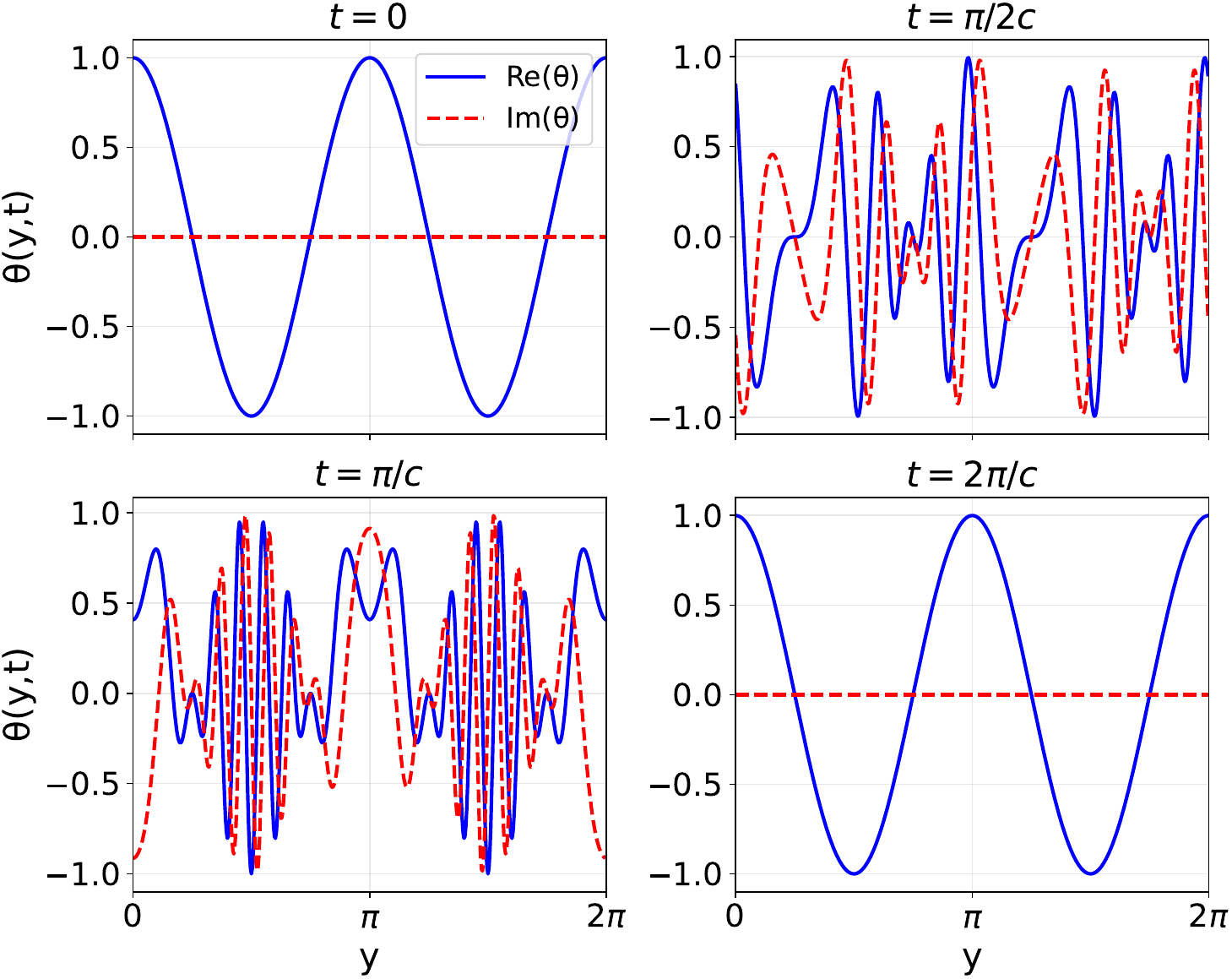}
  \caption{Snapshots of the inviscid solutions 
$\hat\Theta(k,y,t)$ first Fourier mode ($|k|=1$), 
with initial datum $\hat\Theta_0 = \cos(2y)$ and translation 
speed $c = 0.1$, at times 
$t = 0,\, \pi/2c,\, \pi/c,\, 2\pi/c$. 
The solution develops increasingly fine-scale oscillations 
within the time interval $[0, \pi/c]$, 
consistent with the mixing mechanism of 
Theorem~\ref{mix:thm:time-avg-mixing}. After one full period 
($t = 2\pi/c$), the solution returns to its initial profile.}
\label{fig:inviscid_snapshots}
\end{figure}
\FloatBarrier
    \paragraph{Intermediate translation speeds.}

    In the presence of viscosity, i.e.\ when $\nu\neq 0$, and for intermediate translation speeds, the analysis becomes more involved. The main difficulty lies in capturing the time dependence of the shear and quantifying the influence of the translation speed on the decay rate. We address this issue by adapting the hypocoercivity framework introduced by Villani~\cite{villani}. We prove

    \begin{thm}\label{hyp:thm:L2_phys}
    Let $\ell\in\big(\tfrac13,\tfrac34\big)$, $\beta_0\in(0,1)$ and assume $c = c_0\,\nu^\ell$. There exists $\mu_0\in(0,1)$ such that for all $k\in\mathbb Z\setminus\{0\}$ and $0<\nu\le \mu_0\,\alpha|k|$
    the solution $\widehat{\Theta}(k,y,t)$ to \eqref{adv_fourier}
    with initial datum $\widehat{\Theta}_0\in L^2(\T)$ satisfies
    \begin{equation}\label{hyp:L2_phys}
    \begin{aligned}
  \|\widehat{\Theta}(k,\cdot,t)\|_2^2
  \;\le\;&\;
  C_{\mathrm{ed}}\Bigl(1+\beta_0^{1/2}\Bigl(\frac{\alpha|k|}{\nu}\Bigr)^{\frac{1+2\ell}{5}}\Bigr)\times
  \\
  &\exp\!\Bigl(
    -\frac{\beta_0^{5/4}}{12C_s}\,c_0\,(\alpha|k|)^{\frac{3\ell-1}{5}}\,\nu^{\frac{1+2\ell}{5}}\,t
    -2\nu k^2 t
  \Bigr)\,
  \|\widehat{\Theta}_0(k,\cdot)\|_2^2,
    \end{aligned}
    \end{equation}
 $\forall t\ge0$. Here, $C_{\mathrm{ed}}>0$ and $C_s\ge 1$ are constants independent of $\nu, k$.
    \end{thm}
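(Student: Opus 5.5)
We plan to adapt the Bedrossian--Coti Zelati hypocoercivity scheme to the moving frame. First we remove the trivial $\nu k^2$ dissipation by setting $f=e^{\nu k^2 t}\widehat{\Theta}$. Next we change variables $z=y-ct$, which makes the shear stationary at the cost of a drift: $f$ solves
\begin{equation*}
\partial_t f - c\,\partial_z f + i\alpha k\sin z\, f - \nu\partial_z^2 f=0 .
\end{equation*}
The operator $L=-c\partial_z+i\alpha k\sin z$ is skew-adjoint. We then build the functional
\begin{equation*}
\Phi=\|f\|^2+a\|\partial_z f\|^2+2b\,\mathrm{Re}\langle i k\cos z\, f,\partial_z f\rangle+\gamma\|k\cos z\, f\|^2 ,
\end{equation*}
with $a,b,\gamma$ depending on $\nu$, $\alpha|k|$ and $c$. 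We will choose $b^2\le a\gamma$ so that $\Phi\simeq\|f\|^2+a\|\partial_zf\|^2+\gamma\|k\cos z f\|^2$. In the stationary case the key term is the cross term, which produces $-b\alpha k^2\|\cos z\, f\|^2$. Since $\cos z$ vanishes at the critical points $z=\pm\pi/2$, this only controls $f$ away from them. The drift $c\partial_z$ is the new ingredient. Commuting it with the multiplier $\cos z$ gives extra terms of the form $c\,b\,\mathrm{Re}\langle ik\sin z f,\partial_z f\rangle$ and $c\gamma k^2\mathrm{Re}\langle \sin z\cos z f,\partial_z f\rangle$. These do not close the standard hierarchy, which is the closing problem mentioned in the abstract. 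We will therefore add a further term to $\Phi$, namely $2\delta\,\mathrm{Re}\langle i k\sin z\, f,\partial_z f\rangle$ or $\varepsilon\|k \sin z\, f\|^2$. Its commutator with $i\alpha k\sin z$ vanishes, but its commutator with $c\partial_z$ produces $c\cos z$. Combined with the $\cos z$ terms, this gives coercivity of the form $\|f\|^2\lesssim\|\cos z f\|^2+\|\sin z f\|^2$, i.e.\ control near the critical points. This step is how the translation speed enters the rate.

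Concretely, we compute $\frac{d}{dt}\Phi$ term by term. From $\nu$ we get the dissipation $-2\nu\|\partial_zf\|^2-2a\nu\|\partial_z^2f\|^2-\ldots$. From the cross terms we get the good terms $-b\alpha k^2\|\cos z f\|^2$ and $-\delta c k\|\cos z f\|^2$ (up to signs fixed by the choice of $i$-phases). All remaining contributions are bad terms, and we bound them by Cauchy--Schwarz and Young, absorbing them into the good ones. For example, $b\nu|k|\,\|\partial_z f\|\,\|\partial_z^2 f\|$ is absorbed using $b^2\ll a\gamma$ and $b\nu\ll a\nu$. For the term $\|f\|^2$ we use a spectral-gap-type splitting near $\pm\pi/2$. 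On a neighbourhood of width $\sigma$ we use $\|f\|^2_{L^2(|z\mp\pi/2|<\sigma)}\lesssim \sigma\|f\|_\infty^2\lesssim\sigma\|f\|\|\partial_z f\|+\sigma\|f\|^2$. Away from the critical points we use $\|f\|^2\lesssim\sigma^{-2}\|\cos z f\|^2$. The translation term supplies the additional control that changes the optimal width $\sigma$ from its stationary value.

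Finally, we optimise the parameters. We take $a\sim\nu^{p}$, $b\sim\nu^{q}$, $\gamma\sim\nu^{r}$ and $\delta$ as suitable powers times powers of $\alpha|k|$ and $c_0$. Balancing $\nu a^{-1}$, $b\alpha k^2\sigma^2$ and $c\delta\sigma^2$ with $c=c_0\nu^\ell$, we expect to obtain $\frac{d}{dt}\Phi\le -\lambda\Phi$ with $\lambda\sim \beta_0^{5/4}c_0(\alpha|k|)^{(3\ell-1)/5}\nu^{(1+2\ell)/5}$. The constraint $\ell\in(1/3,3/4)$ should come out of the two requirements: the new exponents must make $b^2\le a\gamma$ admissible, and the $c$-generated bad terms must be absorbable. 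The free smallness parameter $\beta_0$ and the constant $C_s$ will be set through this absorption. To reach $L^2$ data we use a preliminary smoothing step. On $[0,t_0]$ with $t_0\sim\lambda^{-1}\wedge\nu^{-1/3}$, the basic energy estimate gives $\int_0^{t_0}\nu\|\partial_zf\|^2\le\|f_0\|^2$, so some time $t_*\le t_0$ has $a\|\partial_zf(t_*)\|^2\lesssim (a/\nu t_0)\|f_0\|^2$. This produces the prefactor $1+\beta_0^{1/2}(\alpha|k|/\nu)^{(1+2\ell)/5}$. We then undo the substitutions; the change of variables $z=y-ct$ preserves $L^2$ norms.

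The main obstacle we expect is the closure step: finding the extra functional term whose time derivative both produces coercivity at the moving critical points and generates only commutator errors controllable by existing terms. The $c$-terms $c\,\delta\,\mathrm{Re}\langle\cos z f,\partial_zf\rangle$ and $c\gamma\|\cdot\|$ cascade, so the choice of the additional coefficient and its $\nu$-scaling is delicate. We would first try $\varepsilon\|k\sin z f\|^2$ combined with a cross term $\mathrm{Re}\langle k\sin z f, \partial_z f\rangle$, and check the exponent inequalities numerically before writing the estimates.
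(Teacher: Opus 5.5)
Your setup matches the paper's: removing $e^{-\nu k^2t}$, moving to the frame $z=y-ct$, the standard functional built from $\|f\|^2$, $\|\partial_zf\|^2$, $\mathrm{Re}\langle i\cos z\,f,\partial_zf\rangle$ and $\|\cos z\,f\|^2$, a spectral-gap splitting, and a burn-in time to reach $L^2$ data. The gap is that the step producing the improved rate is left open, and the two candidate terms you propose do not do what you claim. With $\partial_tf=c\partial_zf-i\alpha k\sin z\,f+\nu\partial_z^2f$ one computes
\[
\frac{d}{dt}\mathrm{Re}\langle i\sin z\,f,\partial_zf\rangle=-c\,\mathrm{Re}\langle i\cos z\,f,\partial_zf\rangle-\alpha k\,\mathrm{Re}\langle\sin z\cos z\,f,f\rangle+(\text{terms with a factor }\nu).
\]
So the commutator with $i\alpha k\sin z$ does not vanish. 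The drift gives a sign-indefinite cross term of $E_3$-type, not a good term $-\delta ck\|\cos z\,f\|^2$.

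For $\|\sin z\,f\|^2$, the drift contributes $-2c\,\mathrm{Re}\langle\sin z\cos z\,f,f\rangle$, which is again indefinite. Its only coercive $\|\sin z\,f\|^2$ contribution is the $O(\nu)$ term coming from $\partial_z^2(\sin^2 z)$. Your other candidate, $\mathrm{Re}\langle\sin z\,f,\partial_zf\rangle=-\tfrac12\int\cos z\,|f|^2$, does not help either.

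Even granting your claim, producing more $\cos z$-control cannot yield $\|f\|^2\lesssim\|\cos zf\|^2+\|\sin zf\|^2$, since the critical points are exactly where $\cos z$ vanishes. So nothing in your functional controls $\|\sin z\,f\|^2$ with a $c$-dependent coefficient. Any weight on $\|\sin z f\|^2$ in $\Phi$ then forces $\lambda\lesssim\nu$ in $\tfrac{d}{dt}\Phi\le-\lambda\Phi$, and you are back to the stationary $\nu^{1/2}$ mechanism. As written, the rate $\nu^{(1+2\ell)/5}$, the exponents and the range $\ell\in(1/3,3/4)$ are asserted rather than derived.

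The missing idea is the zeroth-order mixed term $E_6=\mathrm{Re}\langle\sin y\,\theta,\cos y\,\theta\rangle$. The shear drops out of all these weighted $L^2$ quantities, and $\partial_y(\sin y\cos y)=\cos^2y-\sin^2y$. Hence the drift acts as a closed cycle on the weights $\cos^2$, $\sin\cos$, $\sin^2$. In the paper's scaled variables:
\[
\tfrac12\tfrac{d}{d\tau}E_4=\varsigma E_6+O(\mu),\qquad \tfrac{d}{d\tau}E_6=-\varsigma E_4+\varsigma E_7+O(\mu),\qquad \tfrac12\tfrac{d}{d\tau}E_7=-\varsigma E_6+O(\mu).
\]
The paper puts $-2\beta_1E_6+\gamma_1E_7$ into $\Phi$, keeping coercivity through $\beta_1^2\le\gamma_0\gamma_1$. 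This yields the good term $-\beta_1\varsigma E_7$, at the cost of $+\beta_1\varsigma E_4$, which is absorbed by the $\beta_0E_4$ term from the standard cross term.

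That good term is what pays for the large weight $\gamma_1=\beta_0^{1/2}\mu^{-q}$ on $E_7$ in the coercivity bound. The resulting dissipation $\gamma_1\mu\|\sin y\,\partial_y\theta\|^2$, together with $\gamma_0\mu\|\cos y\,\partial_y\theta\|^2$ and $\sin^2+\cos^2=1$, upgrades the gradient dissipation to $\mu^{1-q}E_1$. The stationary spectral-gap lemma with $\sigma\sim\mu^{1-q}$ then gives $\mu^{(1-q)/2}E_0=\mu^pE_0$.

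The remaining errors, $B_6E_6$ and the drift term $\varsigma\,\mathrm{Re}\langle i\sin y\,\theta,\partial_y\theta\rangle$ from $\tfrac{d}{d\tau}E_3$, are absorbed under the constraints (C1)--(C3) plus the matching with the coercivity bound. The solution of these constraints is $p=\tfrac{1+2\ell}{5}$, $q=\tfrac{3-4\ell}{5}$, $\ell\in(\tfrac13,\tfrac34)$. Your burn-in step is fine once this is in place.
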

We first highlight several features of the decay rate in \eqref{hyp:L2_phys}.
In the admissible range of~$\ell$, the viscosity exponent is
$\tfrac{1+2\ell}{5}$, and therefore represents a linear interpolation between
the endpoints $\ell = \tfrac{1}{3}$, yielding a rate $\sim\nu^{1/3}$, and
$\ell = \tfrac{3}{4}$, yielding a rate $\sim\nu^{1/2}$. Both endpoints admit a natural interpretation in terms of the known sharp rates
for stationary shear flows. The enhanced dissipation rate for a stationary flow
with simple critical points (i.e.\ $c=0$ in~\eqref{sine_flow}) is known to
scale as $\nu^{1/2}$ (cf.~\cite{Bedrossian2017}), which is
sharp~\cite{Zelati2021}. Therefore, throughout the admissible range the rate is
strictly faster than in the stationary setting, reflecting the effect of moving
critical points. The upper endpoint $\ell = \frac{3}{4}$ coincides with the
endpoint of the perturbative regime identified in~\cite{Coble2024}, and the corresponding rates agree at this threshold. Our estimate therefore connects seamlessly to the
perturbative regime. At the lower endpoint $\ell \to \frac{1}{3}$, the rate approaches
$\nu^{1/3}$, which is the sharp rate for stationary monotone shear
flows~\cite{Zelati2021}. This suggests that sufficiently rapid
translation effectively weaken the degeneracy of the critical points.
Heuristically, the lower endpoint $\ell=1/3$ can be understood by comparing
the natural timescales of mixing and diffusion. Since $c\sim\nu^\ell$,
the shear profile $\sin(y-ct)$ (fixed $y$) crosses the axis on the timescale
$t\sim 1/c\sim\nu^{-\ell}$. This is also the maximal time window on which
the inviscid mixing estimate of Theorem~1 remains effective.

On the other hand, the enhanced dissipation rate obtained in
Theorem~2 corresponds to a decay timescale
$t\sim\nu^{-(1+2\ell)/5}$. Equating the two timescales
\[
\nu^{-(1+2\ell)/5}\sim\nu^{-\ell}
\]
yields $\ell=1/3$. This suggests that when $\ell<1/3$ the shear
reverses direction before sufficient gradients have accumulated to sustain
enhanced dissipation at this rate, so that the mixing mechanism can no
longer drive the decay at the scale $\nu^{(1+2\ell)/5}$. While this argument is purely heuristic, it
provides a natural explanation for the endpoint $\ell=1/3$ appearing
in our results. 

Moreover, numerical
computations for a range of~$\nu$ values confirm that our scaling agrees with
the numerically recovered rate with high confidence
(see Figure~\ref{fig:sweep}). The acceleration of decay compared to the
stationary setting is also consistent with the asymptotic eigenvalue analysis
of~\cite{Vanneste2007}.

In contrast to the stationary flow and the perturbative regime, the scaling in
the wavenumber~$k$ does differ from the scaling in~$\nu$. Whether this is a
genuine feature of the intermediate regime or an artifact of our method remains
an interesting open question. Additionally, the estimate~\eqref{hyp:L2_phys} carries a
logarithmic prefactor in front of the exponential. In the stationary setting,
such corrections can be removed via time-dependent weights in the hypocoercivity
method (cf.~\cite{Wei2019}). It remains open whether an analogous approach
applies in the present setting.

We now turn to the key analytical difficulty underlying
Theorem~\ref{hyp:thm:L2_phys}. In contrast to the stationary setting (cf.~\cite{Bedrossian2017}) or to time-modulated flows (cf.~\cite{benthaus2026}), the time dependence of the shear prevents the hypocoercive structure from closing at finite order. The commutator chain no longer terminates but instead produces a cyclic interaction between cosine- and sine-weighted components generated by the translation of the profile. Geometrically, this reflects the fact that the motion of the critical points continually redistributes spatial oscillations rather than allowing them to accumulate in a fixed direction. To capture this effect, we extend the energy functional by
introducing additional commutator levels, which allow us to track the transfer
of energy across these oscillating modes. The resulting estimate shows that when
the translation speed scales as $c \sim \nu^\ell$ with
$\ell \in (1/3, 3/4)$, the time dependence of the shear
produces a genuine mixing mechanism that enhances dissipation beyond the purely
diffusive scale.

\paragraph{Fast translation speeds.}
  In the regime of large $c$, we prove that solutions of
\eqref{adv} remain close in $L^2$ to the solution
$\Theta_{\mathrm{H}}$ of the heat equation with the same
initial datum, on any fixed time interval, with an error
that improves as $c$ increases.
    \begin{thm}\label{lc:thm:large_c}
    Let  $\Theta$ solve \eqref{adv} with initial datum $\Theta_0\in H^2(\T^2)$. For every $T_*>0$, $0 < \nu \le 1$ and $c > \frac{5}{2}$, the following estimate holds for all $t\in[0,T_*]$:
    \begin{equation}\label{int:eq:diff-estimate}
     \|\Theta(\cdot,t)-\Theta_{\rm{H}}(\cdot,t)\|_2^2
    \le C
   \Bigl(
    \frac{1}{c}+\frac{\nu}{c}\Bigr)
    \Bigl(\|\nabla^2 \Theta_0\|^2_2+\|\partial_x \Theta_0\|^2_2\Bigr)
    e^{C t},
    \end{equation}
    where $C>0$ is a constant  independent of $\nu, c.$ 
    \end{thm}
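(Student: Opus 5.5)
The plan is a first-order averaging (corrector) argument in physical space. Set $w=\Theta-\Theta_{\rm H}$. Since $\partial_t\Theta_{\rm H}=\nu\Delta\Theta_{\rm H}$ with the same initial datum, $w$ solves
\[
\partial_t w+\alpha\sin(y-ct)\,\partial_x w-\nu\Delta w=-\alpha\sin(y-ct)\,\partial_x\Theta_{\rm H},\qquad w(\cdot,0)=0 .
\]
Testing this equation with $w$ kills the transport term, because the field is divergence free. The only difficulty is therefore the forcing. It is $O(1)$ in size, but it oscillates in time with frequency $c$, so its time integral is $O(1/c)$.

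To exploit this, introduce the antiderivative $\sin(y-ct)=\partial_t\bigl(c^{-1}\cos(y-ct)\bigr)$ and define the corrector
\[
\phi=\frac{\alpha}{c}\cos(y-ct)\,\partial_x\Theta_{\rm H}.
\]
A direct computation gives
\[
\partial_t\phi=\alpha\sin(y-ct)\,\partial_x\Theta_{\rm H}+\frac{\alpha\nu}{c}\cos(y-ct)\,\partial_x\Delta\Theta_{\rm H}.
\]
Now set $z=w+\phi$. The leading forcing cancels, and
\[
\partial_t z+\alpha\sin(y-ct)\partial_x z-\nu\Delta z=\alpha\sin(y-ct)\partial_x\phi-\nu\Delta\phi+\frac{\alpha\nu}{c}\cos(y-ct)\partial_x\Delta\Theta_{\rm H},
\qquad z(0)=\phi(0).
\]
Every term on the right carries a factor $1/c$. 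They are bounded by
\[
\frac{C}{c}\Bigl(\|\partial_x^2\Theta_{\rm H}\|_2+\nu\|\nabla^2\Theta_{\rm H}\|_2+\nu\|\partial_x\Theta_{\rm H}\|_2+\nu\|\partial_x\Delta\Theta_{\rm H}\|_2\Bigr).
\]
The third-derivative term is the one that does not obviously fit the $H^2$ data. Do not estimate it in $L^2$; instead pair it with $z$ and integrate by parts once. After the integration by parts, the $\nu\,\nabla^2\Theta_{\rm H}$ factor is absorbed into the dissipation $\nu\|\nabla z\|_2^2$ via Young's inequality. The same trick handles $\nu\Delta\phi$: after integration by parts it produces only $\nu\|\nabla\phi\|_2\|\nabla z\|_2$, so we need only $\|\nabla\phi\|_2\lesssim c^{-1}(\|\nabla\partial_x\Theta_{\rm H}\|_2+\|\partial_x\Theta_{\rm H}\|_2)$. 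The heat equation does not increase any $H^s$ seminorm, so all these norms of $\Theta_{\rm H}$ are bounded by the corresponding norms of $\Theta_0$.

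The energy inequality then reads
\[
\tfrac12\tfrac{d}{dt}\|z\|_2^2+\tfrac{\nu}{2}\|\nabla z\|_2^2\le \|z\|_2^2+\frac{C}{c^2}(1+\nu)\bigl(\|\nabla^2\Theta_0\|_2^2+\|\partial_x\Theta_0\|_2^2\bigr).
\]
The initial value satisfies $\|z(0)\|_2^2=\|\phi(0)\|_2^2\lesssim c^{-2}\|\partial_x\Theta_0\|_2^2$. Gronwall's inequality gives $\|z(t)\|_2^2\le C c^{-2}(\cdots)e^{Ct}$. Finally,
\[
\|w\|_2^2\le 2\|z\|_2^2+2\|\phi\|_2^2 ,
\]
and $\|\phi\|_2^2\lesssim c^{-2}\|\partial_x\Theta_0\|_2^2$, which yields the claim.

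This route actually produces $c^{-2}$. That is consistent with the stated bound $C(\frac1c+\frac\nu c)$, because $c>5/2$ gives $c^{-2}\le c^{-1}$. The paper's specific constants, such as $c>5/2$, presumably come from absorbing terms with the prefactor $\alpha/c$ into the left-hand side. If needed, I would therefore cruder-bound the forcing by $c^{-1}$ times squared norms, using Young's inequality with weight $1/c$.

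The main obstacle is the bookkeeping of derivatives. The corrector introduces $\partial_x\Delta\Theta_{\rm H}$ and $\Delta\phi$, both of which need three derivatives, while the data lie only in $H^2$. The fix is to always integrate by parts onto $z$ and use the viscous term on $z$, so that only second derivatives of $\Theta_{\rm H}$ remain. This is exactly why the right-hand side of the theorem involves $\|\nabla^2\Theta_0\|_2$ and $\|\partial_x\Theta_0\|_2$, and why the viscous contribution appears as $\nu/c$.
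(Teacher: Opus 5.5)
Your proof is correct, and it takes a genuinely different route from the paper's; it is also sharper.

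The paper leaves the entire transport term in the forcing. It writes $\partial_t w-\nu\Delta w=-\sin(y-ct)\,\partial_x\Theta$, with the \emph{advected} solution $\Theta$ on the right, tests with $w$, and integrates by parts in time inside the space--time integral. It then substitutes the equations for $\partial_s\partial_x\Theta$ and $\partial_s w$ and bounds the resulting terms directly. For example, a term of the form $\frac1c\int_0^t\!\int\cos(y-cs)\sin(y-cs)\,|\partial_x\Theta|^2$ is simply bounded by $\frac1c\int_0^t\|\partial_x\Theta\|_2^2$, which is why only $O(1/c)$ survives. This route needs three further ingredients: a separate higher-order energy system for $\Psi=\|\partial_x\Theta\|_2^2+\|\partial_x^2\Theta\|_2^2+\|\partial_x\partial_y\Theta\|_2^2+\|\partial_y^2\Theta\|_2^2$ (the $\partial_y$-derivatives of $\Theta$ genuinely grow, since $\partial_y$ does not commute with the shear); the absorption of $\frac{5}{2c}\nu\int_0^t\|\nabla w\|_2^2$, which is where $c>5/2$ comes from; and an integral Gr\"onwall argument.

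You instead move $\sin(y-ct)\,\partial_x w$ to the left, where it is antisymmetric and drops out. The forcing then involves only $\Theta_{\rm H}$, whose Sobolev seminorms are non-increasing. Your corrector $\phi$ carries out the time integration by parts at the level of the equation, so every residual term is linear in $z$ with an $O(1/c)$ coefficient, and no bounds on derivatives of $\Theta$ itself are needed.

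What your route buys is the stronger estimate $\|w(t)\|_2^2\le Cc^{-2}(1+\nu)\bigl(\|\nabla^2\Theta_0\|_2^2+\|\partial_x\Theta_0\|_2^2\bigr)e^{Ct}$. It holds for every $c\ge1$ and therefore implies the theorem. Since all forcing terms are linear in $z$ or absorbed by $\nu\|\nabla z\|_2^2$, you could even replace $e^{Ct}$ by $(1+t)^2$. The paper's route has the modest advantage of keeping a pure heat operator for $w$ and needing no auxiliary corrector, but yours is both simpler and sharper.

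One point to add for full rigour is a justification of the energy identity for $z$ with merely $H^2$ data. This is routine: either note that the residual forcing lies in $L^2(0,T;H^{-1})$, or mollify $\Theta_0$ and pass to the limit.
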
 
For rapidly translating profiles, the shear oscillates on a time scale much 
shorter than the diffusive one. In particular, the mixing time-window 
$t\sim c^{-1}$ from Theorem~\ref{mix:thm:time-avg-mixing} shrinks to zero, so 
the mixing mechanism that drives enhanced dissipation in the intermediate 
regime can no longer act. As a result, the transport averages out over 
finite time horizons, and no enhanced dissipation mechanism can develop. This is reflected in the absence of any improved decay rate in 
\eqref{int:eq:diff-estimate}, as well as in the exponential factor $e^{Ct}$, 
which arises from a Gr\"onwall argument and limits the estimate to fixed 
time intervals.

Nevertheless, the estimate shows that the effect of advection on the energy remains small when the translation speed $c$. Indeed, in this case the prefactor $\tfrac{1}{c}$ implies that the solution of the advection–diffusion equation stays close, in $L^2$, to the solution of the corresponding heat equation on any fixed time interval. In this sense, rapidly oscillating transport acts only as a weak perturbation of diffusion, and the dynamics becomes asymptotically diffusion–dominated as $c\to\infty$.

\medskip
The remainder of this paper is organised as follows. In
Section~\ref{sec:mix} we establish the time-averaged mixing estimate for
the inviscid problem (Theorem \ref{mix:thm:time-avg-mixing}), using a
space-time stationary phase argument. Section~\ref{sec:hyp} contains the
proof of the enhanced dissipation result at intermediate translation speeds
(Theorem \ref{hyp:thm:L2_decay}). We first introduce the extended
hypocoercive energy functional and prove exponential decay of the functional
(Proposition \ref{hyp:prop:hypo_decay}). The $L^2$-estimate is then
recovered by combining this decay with a gradient bound obtained from the
energy balance over an initial transient layer. In Section~\ref{sec:lc} we
treat the large translation speed regime and prove
Theorem~\ref{lc:thm:large_c}. An integration by parts in time extracts the
smallness $c^{-1}$ from the rapidly oscillating advection. Closing the
resulting estimate requires controlling a system of higher-order energy balances and a Gr\"onwall argument. The Appendix collects auxiliary lemmas, detailed estimates
deferred from the main text, and the parameter optimisation for the
hypocoercive functional.

\begin{figure}
\centering
\includegraphics[width=0.85\textwidth]{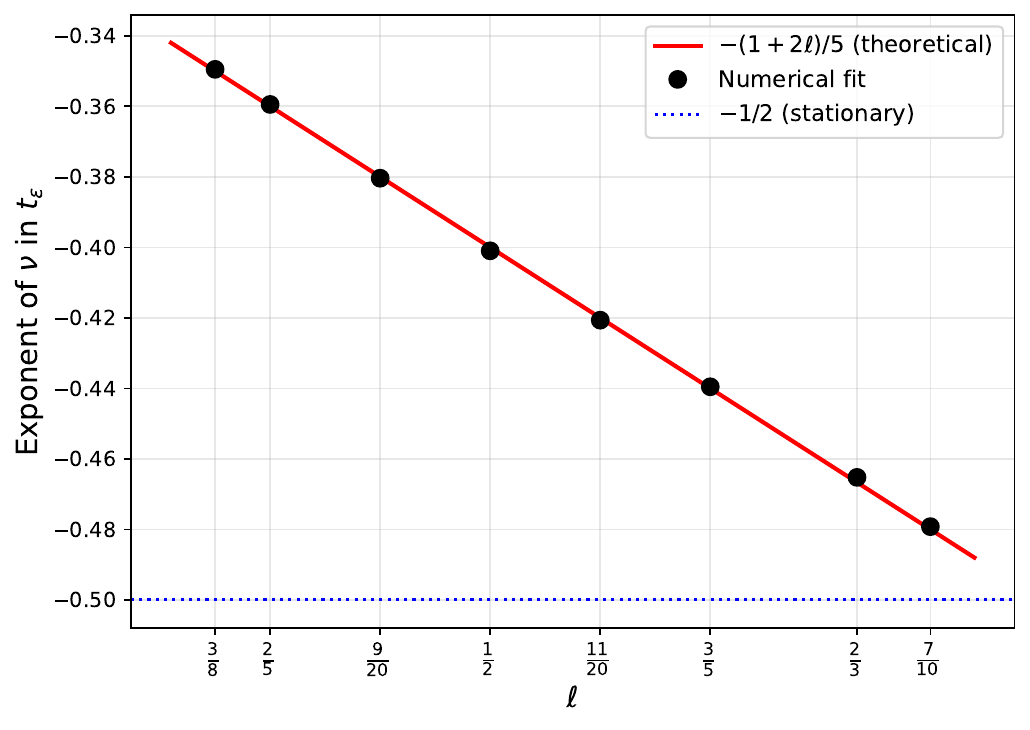}
\caption{Numerically fitted exponent of $\nu$ in the dissipation time
$t_\varepsilon$ (defined as the first time the norm drops below $\varepsilon = 0.1$) as a
function of~$\ell$, where $c = c_0\,\nu^\ell$. Exponents are extracted
via log--log regression of $t_\varepsilon$ against~$\nu$ over ten values
logarithmically spaced in $\nu \in [3\times 10^{-7}, 10^{-2}]$. The
solid line is the theoretical prediction $-(1+2\ell)/5$ from
Theorem~\ref{hyp:thm:L2_phys}, the dotted line the corresponding stationary exponent
$-1/2$. The fitted exponents agree with the predicted curve to within an average relative error of
$0.17\%$.}
\label{fig:sweep}
\end{figure}
\medskip
In the following, $C$ denotes a constant that is independent of the crucial system
parameters (like $c,k,\nu$ etc.) but might change value from line to line. On occasion
we write $A\lesssim B$ if $A\leq CB$ for such a constant $C$, and $A\sim B$ if
$A\lesssim B$ and $B\lesssim A$.

We denote by $\|\cdot\|_2$ the $L^2$-norm, by $\|\cdot\|_{H^s}$ the standard
Sobolev norms, and by $\langle\cdot,\cdot\rangle$ the $L^2$ inner product, taken
conjugate-linear in the second argument. Unless otherwise indicated, these refer to
$L^2(\T^2)$ in physical space and to $L^2_y(\T)$ after taking the Fourier
transform in~$x$.
\FloatBarrier
\section*{Acknowledgments}

J.B. warmly thanks the Department of Mechanics, Mathematics and Management of the Politecnico di Bari for the hospitality, and Dominic Stone for insightful discussions.

G. M. Coclite has been partially supported by the Project  funded  under  the  National  Recovery  and  Resilience  Plan  (NRRP),  Mission  4, Component  2,  Investment  1.4 (Call  for  tender  No.  3138  of  16/12/2021), of  Italian  Ministry  of  University and Research funded by the European Union (NextGenerationEU Award,  No.  CN000023,  Concession  Decree  No.  1033  of  17/06/2022)  adopted  by  the  Italian Ministry of University and Research (CUP D93C22000410001), Centro Nazionale per la Mobilit\`a Sostenibile.  He has also been supported by the 
Italian Ministry of Education, University and Research under the  Programme ``Department of Excellence'' Legge 232/2016 (CUP D93C23000100001), and by the Research Project of National Relevance ``Evolution problems involving interacting scales'' granted by the Italian Ministry of Education, University and Research (MIUR PRIN 2022, project code 2022M9BKBC, CUP D53D23005880006). He warmly thanks the Department of Mathematics of the University of Surrey for the hospitality.

C.N. was partially supported by the LMS Scheme 4 Grant (Ref. No.~4243642436). C.N. thanks Steffen Pottel for helpful comments and insightful discussions on mixing.

\bigskip
\section{Mixing estimates for the inviscid problem}
\label{sec:mix}

In this section we prove Theorem~\ref{mix:thm:time-avg-mixing}.
To this end, we consider the inviscid counterpart of~\eqref{adv}, obtained by setting $\nu=0$. This yields the transport equation
\begin{equation}\label{mix:transport}
\partial_t \Theta(x,y,t) + \bm{u}\cdot \nabla \Theta(x,y,t)=0 .
\end{equation}
We recall from \eqref{shear_flow}–\eqref{sine_flow} that the velocity field is given by
\[
\bm{u}(x,y,t) = (v(y,t), 0)^{\mathsf T},
\qquad \text{where} \qquad
v(y,t) = \alpha \sin (y - c t).
\]
Taking the Fourier transform of \eqref{mix:transport} in the
$x$-variable decouples the Fourier modes $k\in\mathbb{Z}$.
Consequently, the Fourier coefficients
$\widehat{\Theta}(k,y,t)$ evolve independently according to
the mode-by-mode equation
\begin{equation}
\label{mix:transport_four}
  \partial_t \widehat{\Theta}(k,y,t)
  + + i \alpha k\, \sin(y-ct)\, \widehat{\Theta}(k,y,t) = 0. 
\end{equation}
Since $\alpha$ enters only through the product $\alpha$, we set $\alpha=1$ throughout this section; the factor $\alpha$ is restored in the final estimate. The solution then reads
\begin{align*}
  \psi(y,t) := \int_0^t \sin(y - c\tau)\,d\tau, \qquad
\widehat{\Theta}(k,y,t) = \widehat{\Theta}_0(k,y)\,e^{-ik\,\psi(y,t)}.
\end{align*}

\subsection{Proof of Theorem \ref{mix:thm:time-avg-mixing}}
We recall that for a fixed wave number $k\in\mathbb Z\setminus\{0\}$, we have solutions to \eqref{mix:transport_four} that can be expressed as
\[
   \widehat{\Theta}(k,y,t)=\widehat{\Theta}_{0}(k,y)\,e^{-ik \psi(y,t)},
\]
where
    \begin{align}
    \psi(y,t) := \int_0^t \sin(y - c\tau)\,d\tau=\frac{\cos(y-ct)-\cos(y)}{c}.
\end{align}
Let $\eta\in H^1_y(\mathbb T)$ with $\|\eta\|_{H^1_y}=1$ and set
\[
   \phi(y):=\widehat{\Theta}_{0}(k,y)\,\overline{\eta(y)}.
\]
Then the goal is to find an upper bound for the following quantity
\begin{align}\label{mix:qty}
 \frac{1}{T}\int_{t_0}^T \int_{\mathbb T}
         e^{-ik \psi(y,t)}\phi(y)\,dy\, dt,
\end{align}
where  $t_0$ will be determined later.\\
We estimate \eqref{mix:qty} by a non-stationary phase argument, based on integration by parts in $y$ and $t$.
Away from the region where $|\partial_y\psi|$ or $|\partial_t\psi|$ is small we integrate by parts in the corresponding variable,
while the remaining ``critical'' region is controlled by a measure bound.
In particular, we first identify the sets on which integration by parts is effective, and then define the critical sets accordingly.

\paragraph{Step 1: Critical sets.}
We start by computing derivatives as
\begin{align*}
    \partial_y\psi(y,t)=\frac{\sin(y)-\sin(y-ct)}{c}= \frac{2}{c}\sin(\frac{ct}{2})\cos(\frac{ct}{2}-y)
\end{align*}
and
\begin{align*}
     \partial_t \psi(y,t)={}&\sin(y-ct).
\end{align*}
From this we proceed to find their zeros. \\
\\
The identity $\partial_y \psi(y,t)=0$ holds, with $n\in \mathbb{Z}$, at
\begin{equation*}
\sin(\frac{ct}{2})=0 \quad\text{or}\quad \cos(\frac{ct}{2}-y)=0,
    \end{equation*}
that is
\begin{equation*}
t=\frac{2n\pi}{c} \quad\text{or}\quad y=\frac{ct}{2}-n\pi-\frac{\pi}{2},
    \end{equation*}
where we note that the first condition in time only. At these times the derivative vanishes independently of $y$ and hence we need to impose
\begin{align}
\label{mix:T_bound}
    T\leq \frac{2\pi}{c}.
\end{align}
This leaves the condition for $\partial_y \psi(y,t)=0$ as
\begin{align}\label{mix:py_zero}
    y=\frac{ct}{2}-n\pi-\frac{\pi}{2}.
\end{align}
$\partial_t \psi(y,t)=0$ holds at
\begin{equation}
\label{mix:pt_zero}
\sin(y-ct)=0,
\end{equation}
that is
\begin{equation}
 y=n\pi+ct.
\end{equation}
Now we introduce regions on which at least one derivative is bounded from below. Let $\delta \ll 1, \varepsilon \ll 1$ be small parameters. From \eqref{mix:py_zero} we define
\begin{align}
\label{mix:set_D}
  \mathcal{D}
  :=\left\{(y,t):\,\left|\cos(\frac{ct}{2}-y)\right|\ge\delta\right\}
\end{align}
and from \eqref{mix:pt_zero} we define
\begin{align}
\label{mix:set_E}
\mathcal{E}
  :=\left\{(y,t):\,\left|\cos(\frac{ct}{2}-y)\right|<\delta,\;|\sin(y-ct)|\ge\varepsilon\right\},
\end{align}
where the first condition in the definition excludes points already covered by $\mathcal{D}$.\\
Lastly, we have to consider a critical region $\mathcal C$ where no derivative is bounded from below, defined via
\begin{align}
\label{mix:set_C}
\mathcal C:=\left\{(y,t):\,
          \left|\cos(\frac{ct}{2}-y)\right|<\delta,\;
          |\sin(y-ct)|<\varepsilon\right\}.
\end{align}
Then the spcace-time strip $\T\cross [t_0,T]$ is partitioned by these sets as
\[
\T\cross [t_0,T]=\mathcal{D} \sqcup \mathcal{E} \sqcup \mathcal C.
\]

\paragraph{Step 2: Region-by-region estimates.}
We split the integral \eqref{mix:qty} into the regions identified above.
We now split \eqref{mix:qty} as:
\begin{align}
    \frac{1}{T}\int_{t_0}^T \int_{\mathbb T}
         e^{-ik \psi(y,t)}\phi(y)\,dy\, dt
             &= \underbrace{\frac{1}{T}\iint_{\mathcal{D}}   e^{-ik \psi(y,t)}\phi(y)\,dy \, dt}_{I_{\mathcal{D}}} \label{mix:term:D} \\
             &+ \underbrace{\frac{1}{T}\iint_{\mathcal{E}}  e^{-ik \psi(y,t)}\phi(y)\, dy \, dt}_{I_{\mathcal{E}}} \label{mix:term:E} \\
             &+ \underbrace{\frac{1}{T}\iint_{\mathcal{C}} e^{-ik \psi(y,t)}\phi(y)\,dy\, dt. }_{I_\mathcal{C}}\label{mix:term:C}
\end{align}
\emph{Estimate of $I_{\mathcal{D}}$}.\\
We write \eqref{mix:term:D} as
\begin{align*}
    I_{\mathcal{D}}=\frac{1}{T}\iint_{\mathcal{D}}   e^{-ik \psi(y,t)}\phi(y)\,dy \, dt={}&\frac{1}{T} \int_{t_0}^T\left( \sum_{j}\int_{y_j^-(t)}^{y_j^+(t)} e^{-ik \psi(y,t)}\phi(y)\, dy \right)\, dt,
\end{align*}
where the integration limits $y_j^\pm(t)$ can be directly computed from the definition of $\mathcal{D}$ (cf. \eqref{mix:set_D}) as
\begin{align*}
y_j^\pm(t) = \frac{ct}{2} + j\pi \pm \arccos(\delta).
\end{align*}
Therefore we can perform an integration by parts in $y$ on these intervals.
For this we use $\partial_y e^{-ik \psi} = -ik\,\partial_y \psi(y,t)\,e^{-ik \psi}$ to write
\[
   e^{-ik \psi(y,t)}
   = \frac{1}{-ik\,\partial_y \psi(y,t)}\,\partial_y\bigl(e^{-ik \psi(y,t)}\bigr).
\]
Hence, we get for the inner spatial integral of $I_{\mathcal{D}}$ that
\begin{align}
\label{mix:IG_Y_IBP}
   \sum_j &\int_{y_j^-}^{y_j^+} e^{-ik \psi(y,t)}\phi(y)\,dy\notag \\
   &= \sum_j \int_{y_j^-}^{y_j^+}
        \frac{\phi(y)}{-ik\,\partial_y \psi(y,t)}\,
        \partial_y\bigl(e^{-ik \psi(y,t)}\bigr)\,dy \notag \\
   &= \frac{1}{ik} \sum_j \left( \int_{y_j^-}^{y_j^+}
        e^{-ik \psi(y,t)}\,
        \partial_y\Bigl(\frac{\phi(y)}{\partial_y \psi(y,t)}\Bigr)\,dy
        +\left[\frac{\phi(y) e^{-ik \psi(y,t)}}{\partial_y \psi(y,t)}\right]_{y_j^-}^{y_j^+} \right).
\end{align}
Now we  use the fact that we have an explicit expression for $\psi(y,t)$ and are hence able to bound these terms directly. As the estimates are standard but lengthy, we refer to Appendix \ref{mix:Appendix-A} for a proof. From there we get via \eqref{mix:A:final}, that 
\begin{align}
\label{mix:Gy_est_pre}
    \abs{I_{\mathcal{D}}}\leq \frac{1}{T}\int_{t_0}^T\abs{\sum_j \int_{y_j^-}^{y_j^+} e^{-ik \psi(y,t)}\phi(y)\,dy\,  }dt \lesssim\frac{1}{T}\int_{t_0}^T \frac{1}{|k|t\delta}\,\|\widehat{\Theta}_0\|_{H^1_y}dt.
\end{align}
Note that the computations of Appendix \ref{mix:Appendix-A} force us to slightly restrict $\eqref{mix:T_bound}$ to $T\leq\frac{\pi}{c}$. If we further choose $t_0=1$ we get
\begin{align}
\label{mix:Gy_est}
     \abs{I_{\mathcal{D}}}\lesssim\frac{1}{T} \frac{\ln(T)}{|k|\delta}\,\|\widehat{\Theta}_0\|_{H^1_y}.
\end{align}
\emph{Estimate of $I_{\mathcal{E}}$}\\
We write \eqref{mix:term:E} as
\begin{align*}
  I_{\mathcal{E}}=\frac{1}{T}\iint_{\mathcal{E}}  e^{-ik \psi(y,t)}\phi(y)\, dy \, dt
  &=\frac{1}{T}\int_{\T} \left( \sum_j \int_{t_{m,n}^-(y)}^{t_{m,n}^+(y)} e^{-ik \psi(y,t)}\phi(y)\, dt \right)\, dy.
\end{align*}
For a fixed connected component of $\mathcal{E}$ indexed by $m,n \in \mathbb{Z}$, the integration interval $[t_{m,n}^-(y), t_{m,n}^+(y)]$ is explicitly defined by the two inequalities defining $\mathcal{E}$. We note that the endpoints can be defined via indices $m,n\in\mathbb Z$ such that:
\begin{align}
  t_{m,n}^-(y)
  &= \max\left\{
      \frac{2}{c}\bigl(y+\arccos\delta+m\pi\bigr),\;
      \frac{1}{c}\bigl(y-(n+1)\pi+\arcsin\varepsilon\bigr)
    \right\},\\
  t_{m,n}^+(y)
  &= \min\left\{
      \frac{2}{c}\bigl(y+\pi-\arccos\delta+m\pi\bigr),\;
      \frac{1}{c}\bigl(y-n\pi-\arcsin\varepsilon\bigr)
    \right\}.
\end{align}

On these intervals we have $|\partial_t \psi(y,t)|\geq \varepsilon$ and hence we are able to integrate by parts in time. Further, note that the second derivative is bounded from above as
\[
\partial_t^2\psi(y,t)=-c\cos(y-ct) \ \implies \abs{\partial_t^2\psi(y,t)}\leq c.
\]
Hence we get for the inner time integral of $I_{\mathcal{E}}$ that
\begin{align}
\label{mix:Gy_IBP}
  \sum_j &\int_{t_j^-}^{t_j^+} e^{-ik \psi(y,t)}\phi(y)\, dt\notag \\
  ={}& \frac{\phi(y)}{ik} \sum_j \int_{t_j^-}^{t_j^+}
        \frac{1}{\partial_t\psi(y,t)}\,
        \partial_t\bigl(e^{-ik\psi(y,t)}\bigr)\,dt\notag \\
  = &\frac{\phi(y)}{ik} \sum_j \left(
     \left[\frac{e^{-ik\psi(y,t)}}{\partial_t\psi(y,t)}\right]_{t_j^-}^{t_j^+}
     - \int_{t_j^-}^{t_j^+}
        \frac{\partial_t^2\psi(y,t)}{(\partial_t\psi(y,t))^2}
        e^{-ik\psi(y,t)}\,dt \right).
\end{align}
In order to estimate the integral term, note that the intervals $[t_{m,n}^-(y), t_{m,n}^+(y)]$ are guaranteed to be contained in a region where $|\cos(\frac{ct}{2}-y)| < \delta$ (cf. \eqref{mix:set_E}). The endpoints of such a region are given by
\[
   t^\pm(y) = \frac{2}{c}\left(y+\frac{\pi}{2}\right) \pm \frac{2}{c}\arcsin(\delta).
\]
Consequently, for all $y\in \mathcal{E}$, we get
\[
 |t_{m,n}^+(y) - t_{m,n}^-(y)|
   \le |t^+(y) - t^-(y)|
   = \frac{4}{c}\arcsin(\delta)
   \lesssim \frac{\delta}{c},
\]
where we used that $\delta \ll 1$ and therefore approximated $\arcsin(\delta)\sim \delta.$\\
Then, by recalling that $|\partial_t \psi(y,t)|\geq \varepsilon$ and
 $\abs{\partial_t^2\psi(y,t)}\leq c$, we  find
  \begin{align*}
  \sum_j \Bigl|\int_{t_j^-}^{t_j^+}
        \frac{\partial_t^2\psi(y,t)}{(\partial_t\psi(y,t))^2}
        e^{-ik\psi(y,t)}\,dt\Bigr|
  &\le \frac{c}{\varepsilon^2}\,\sum_j \Bigl|\int_{t_j^-}^{t_j^+} 1\, dt\Bigr|
   \lesssim \frac{c}{\varepsilon^2} \frac{\delta}{c}=\frac{\delta}{\varepsilon^2}.
\end{align*}
Thus \eqref{mix:Gy_IBP} can be estimated as
\[
  \Bigl|\sum_j \int_{t_j^-}^{t_j^+} e^{-ik\psi(y,t)}\phi(y)\,dt\Bigr|
 \lesssim
  \frac{|\phi(y)|}{|k|}\left(\frac{1}{\varepsilon}
                    + \frac{\delta}{\varepsilon^2}\right).
\]
We then can get an estimate for $I_{\mathcal{E}}$ as
\begin{align*}
  |I_{\mathcal{E}}|
  &= \left|\frac{1}{T}\int_{\T} \left( \sum_j \int_{t_j^-}^{t_j^+} e^{-ik \psi(y,t)}\phi(y)\, dt \right)\, dy\right| \\
  &\lesssim \frac{1}{|k|\,T}\left(\frac{1}{\varepsilon}
                    + \frac{\delta}{\varepsilon^2}\right)
        \int_{\T}|\phi(y)|\,dy \\
  &\lesssim \frac{1}{|k|\,T}\left(\frac{1}{\varepsilon}
                    + \frac{\delta}{\varepsilon^2}\right)
        \|\phi\|_2
   \lesssim
   \frac{1}{|k|\,T}\left(\frac{1}{\varepsilon}
                    + \frac{\delta}{\varepsilon^2}\right)
   \|\widehat{\Theta}_0\|_{H^1_y},
\end{align*}
where we used $\|\phi\|_{2}\leq \|\widehat{\Theta}_0\|_\infty\|\eta\|_{2}\lesssim\|\widehat{\Theta}_0\|_{H^1_y}\|\eta\|_{H^1_y}$ and
$\|\eta\|_{H^1}=1$.
We conclude that
\begin{align}
\label{mix:Gt_est}
    |I_{\mathcal{E}}|\lesssim
   \frac{1}{|k|\,T}\left(\frac{1}{\varepsilon}
                    + \frac{\delta}{\varepsilon^2}\right)
   \|\widehat{\Theta}_0\|_{H^1_y}.
\end{align}
\\
\emph{Estimate of $I_{\mathcal C}$}.\\
On $\mathcal{C}$ we experience double-degeneracies of the derivatives and hence neither integration by parts is available. Therefore, we simply bound the double integral by the regions measure. We get
\begin{align*}
  I_\mathcal{C}= \frac{1}{T}\iint_{\mathcal{C}} e^{-ik \psi(y,t)}\phi\,dy\, dt\leq \frac{1}{T} \|\phi\|_{L^\infty_y} \cdot \operatorname{meas}(\mathcal{C}).
\end{align*}
To compute the measure, we use the change of variables $u = \frac{ct}{2} - y$ and $v = y - ct$. The Jacobian determinant is calculated as
\[
    \det \frac{\partial(u,v)}{\partial(y,t)} = \det \begin{pmatrix} -1 & c/2 \\ 1 & -c \end{pmatrix} = \frac{c}{2},
\]
which yields the volume element $dy\,dt = \frac{2}{c}\,du\,dv$. 

The region $\mathcal{C}$ is then defined by $|\cos u| < \delta$ and $|\sin v| < \varepsilon$. These inequalities correspond to the intervals
\[
    u \in (\arccos \delta, \arccos(-\delta)) \quad \text{and} \quad v \in (\arcsin(-\varepsilon), \arcsin \varepsilon).
\]
Using these explicit bounds, we compute the measure as:
\begin{align*}
    \operatorname{meas}(\mathcal{C}) 
    &= \frac{2}{c} \left( \int_{\arccos \delta}^{\arccos(-\delta)} du \right) \left( \int_{\arcsin(-\varepsilon)}^{\arcsin \varepsilon} dv \right) \\
    &= \frac{2}{c} \Bigl[ \arccos(-\delta) - \arccos(\delta) \Bigr] \Bigl[ \arcsin(\varepsilon) - \arcsin(-\varepsilon) \Bigr].
\end{align*}
For small $\delta, \varepsilon \ll 1$, we use the linear approximations $\arccos(\pm \delta) \approx \frac{\pi}{2} \mp \delta$ and $\arcsin(\pm \varepsilon) \approx \pm \varepsilon$ to find
\[
   \operatorname{meas}(\mathcal{C}) \approx \frac{2}{c} (2\delta)(2\varepsilon) \lesssim\frac{\delta\varepsilon}{c}.
\]
Using again the embedding $H^1(\T)\hookrightarrow L^\infty(\T)$ we then find that 
\begin{align}\label{mix:C_est}
     |I_\mathcal{C}|\leq \frac{1}{T} \cdot \frac{\delta \varepsilon}{c} \|\widehat{\Theta}_0\|_{H^1_y}.
\end{align}
\paragraph{Step 3: Optimisation.}
From \eqref{mix:Gy_est}, \eqref{mix:Gt_est} and \eqref{mix:C_est}, we can find an estimate of \eqref{mix:qty} as
\begin{multline}
\label{mix:pre_meas}
     \abs{\frac{1}{T}\int_{t_0}^T \int_{\mathbb T}
         e^{-ik \psi(y,t)}\phi(y)\,dy\, dt}\\
         \lesssim \frac{1}{T}\qty[\frac{\ln(T)}{|k|\delta}+\frac{1}{|k|}\left(\frac{1}{\varepsilon}
                    + \frac{\delta}{\varepsilon^2}\right)
   +
   \frac{\delta\varepsilon}{c}]\,\|\widehat{\Theta}_0\|_{H^1_y}.
\end{multline}
We now choose $\delta \sim \epsilon$ to balance all contributions. Then \eqref{mix:pre_meas} simplifies to
\begin{align*}
    \label{mix:post_meas}
     \abs{\frac{1}{T}\int_{t_0}^T \int_{\mathbb T}
         e^{-ik \psi(y,t)}\phi(y)\,dy\, dt}
    \lesssim \frac{1}{T}\qty[\frac{\ln(T)}{|k|\delta}
   +\frac{\delta^2}{c}]\,\|\widehat{\Theta}_0\|_{H^1_y}.
\end{align*}
Taking the supremum over all $\eta\in H^1_y(\mathbb{T})$ with $\|\eta\|_{H^1_y}=1$ now yields
\begin{equation}\label{mix:eq:A-bound}
  \biggl\|\frac{1}{T}\int_1^T \widehat{\Theta}(k,\cdot,t)\,dt\biggr\|_{H^{-1}_y}
  \lesssim
  \frac{1}{T}\qty[\frac{\ln(T)}{|k|\delta}
   +\frac{\delta^2}{c}]\|\widehat{\Theta}_0(k,\cdot)\|_{H^1_y}.
\end{equation}
We now minimise the right-hand side of \eqref{mix:eq:A-bound} with respect to
$\delta>0$. Setting
\[
  F(\delta)
  := \frac{\ln T}{|k|\,T\,\delta} + \frac{\delta^2}{c\,T},
\]
we have
\[
  F'(\delta)
  = -\frac{\ln T}{|k|\,T}\,\frac{1}{\delta^2}
    + \frac{2\delta}{c\,T},
\]
so that $F'(\delta)=0$ gives
\[
  \delta^3
  = \frac{c\,\ln T}{2|k|},
  \qquad
  \delta \sim \left(\frac{c\,\ln T}{|k|}\right)^{1/3}.
\]
Using this choice of $\delta$ into \eqref{mix:eq:A-bound} yields the final expression
\begin{equation}\label{mix:eq:A-final}
  \biggl\|\frac{1}{T}\int_1^T \widehat{\Theta}(k,\cdot,t)\,dt\biggr\|_{H^{-1}_y}
  \;\lesssim\;
  \frac{1}{T}\left(\frac{(\ln T)^2}{c\,|k|^2}\right)^{1/3}\|\widehat{\Theta}_0(k,\cdot)\|_{H^1_y},
  \qquad T> 1.
\end{equation}
This concludes the proof.

\section{Intermediate translation speeds}
\label{sec:hyp}

In this section we aim to prove Theorem \ref{hyp:thm:L2_decay}.
We take the Fourier transform of \eqref{adv} in the \(x\)-variable decouples the Fourier modes \(k\in\mathbb Z\). Consequently the Fourier coefficients \(\widehat{\Theta}(k,y,t)\) evolve independently according to the mode-by-mode equation
\begin{equation*}
  \partial_t \widehat{\Theta}(k,y,t) + i \alpha k \sin(y-ct) \widehat{\Theta}(k,y,t) + \nu k^2 \widehat{\Theta}(k,y,t) - \nu \partial_y^2 \widehat{\Theta}(k,y,t) = 0.
\end{equation*}
The mean-free condition \eqref{mean_free} implies that the mode \(k=0\) vanishes.
For the purpose of the proof we temporarily pass to the moving frame 
\begin{equation}
\label{hyp:cov}
  (y,t)\mapsto (y+ct,\;\tau),\qquad \tau:=\alpha \abs{k}\,t,
\end{equation}
 where the shear becomes stationary. 
All estimates are derived in this coordinate system and then translated 
back to the original variables when stating the result.

We define
\begin{equation}
\label{hyp:remove_exp}
  \theta(k,y,\tau)\;:=\;e^{\nu k^2 t}\,\widehat{\Theta}\bigl(k,\,y+ct,\,t\bigr),
\end{equation}
where, in a slight abuse of notation, we continue to write $y$ for the spatial variable in the moving frame.
Further, as outlined in the Introduction, we assume that
\begin{equation}
\label{hyp:c_original}
c =  c_0\,\nu^{\ell}.
\end{equation}
Now we introduce the parameter rescaling
\[
  \varsigma=\frac{c}{\alpha k},\qquad \mu=\frac{\nu}{\alpha k},
\]
then $\theta(k,y,\tau)$ solves
\begin{equation}\label{hyp:eq}
  \partial_\tau\theta(k,y,\tau) - \varsigma\partial_y\theta(k,y,\tau) + i\sin(y)\theta(k,y,\tau) = \mu\partial_y^2\theta(k,y,\tau).
\end{equation}
with
\begin{equation}
\label{hyp:c_0}
  \varsigma = \varsigma_{k}\,\mu^{\ell},
  \qquad
  \varsigma_{k}:= c_0(\alpha|k|)^{\ell-1}.
\end{equation}
We note that the rescaling preserves the power law as $\varsigma \sim \mu^\ell$. We now can state the following Proposition.
\begin{prop}\label{hyp:thm:L2_decay}
Let $\ell\in\big(\frac13,\frac34\big)$ and $\beta_0\in(0,1)$. There exists a threshold $\mu_0\in(0,1)$ such that for all $k\in\mathbb Z\setminus\{0\}$ and $0<\mu\le\mu_0$, assuming the parameter scaling
\[
  \varsigma = c_0(\alpha|k|)^{\ell-1}\mu^\ell,
\]
the solution $\theta(k,y,\tau)$ to \eqref{hyp:eq} with initial datum $\theta_0\in L^2(\T)$ satisfies
\begin{equation}\label{hyp:L2_decay}
  \|\theta(k,\cdot,\tau)\|_2^2
  \;\le\;
  C_{\mathrm{ed}}\Bigl(1+\beta_0^{1/2}\mu^{-\frac{1+2\ell}{5}}\Bigr)
  \exp\!\Bigl(-\frac{\beta_0^{5/4}}{12C_s}\,\varsigma_k\,\mu^{\frac{1+2\ell}{5}}\,\tau\Bigr)\,
  \|\theta_0(k,\cdot)\|_2^2
\end{equation}
for all $\tau\ge0$. Here, $C_{\mathrm{ed}}>0$ and $C_s\ge 1$ are constants independent of $\mu, k, \tau$, and we denoted $\varsigma_k \coloneqq c_0(\alpha|k|)^{\ell-1}$.
\end{prop}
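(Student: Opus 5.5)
We would prove Proposition~\ref{hyp:thm:L2_decay} by a Villani-type hypocoercivity argument in the moving frame, where \eqref{hyp:eq} has the stationary shear $\sin y$ and an extra skew-adjoint transport term $-\varsigma\partial_y$. The basic energy identity
\[
\frac{d}{d\tau}\|\theta\|_2^2=-2\mu\|\partial_y\theta\|_2^2
\]
only gives dissipation in $\partial_y\theta$. We recover control of $\theta$ through commutators. Write $A=\partial_y$ and $B=i\sin y-\varsigma\partial_y$. Then
\[
[A,B]=i\cos y,\qquad [A,[A,B]]=-i\sin y .
\]
In the stationary case (cf.~\cite{Bedrossian2017}) the chain closes with the functional
\[
\|\theta\|^2+a\|\partial_y\theta\|^2+b\,\mathrm{Re}\langle i\cos y\,\theta,\partial_y\theta\rangle+d\|\cos y\,\theta\|^2 .
\]
Here, however, commuting with $-\varsigma\partial_y$ turns $\cos y$ weights into $\varsigma\sin y$ weights and back. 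We therefore plan to work with an extended functional
\[
\Phi=\|\theta\|^2+a\|\partial_y\theta\|^2+b\,\mathrm{Re}\langle i\cos y\,\theta,\partial_y\theta\rangle+d\|\cos y\,\theta\|^2+e\,\mathrm{Re}\langle i\sin y\,\theta,\partial_y\theta\rangle+f\|\sin y\,\theta\|^2,
\]
possibly with one more cross term $\mathrm{Re}\langle \sin y\,\theta,\cos y\,\theta\rangle$-type. The coefficients are chosen as powers $a=\alpha_0\mu^{p_1}$, $b=\beta\mu^{p_2}$, and so on.

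\textbf{Step 1 (differential inequality).} We compute $\frac{d}{d\tau}$ of each term. The $b$-term produces the good term $-b\|\cos y\,\theta\|^2$. The $e$-term, through $\varsigma$, produces $-e\varsigma\|\cos y\,\theta\|^2$-like and $-e\|\sin y\,\theta\|^2$-like contributions. The idea is that the translation supplies coercivity on the set where $\cos y$ vanishes (the critical points). We control all error terms with Cauchy--Schwarz, Young and the constraint $b^2\le ad$ (together with its analogue for $e,f$), so that $\Phi\sim\|\theta\|^2+a\|\partial_y\theta\|^2$.

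\textbf{Step 2 (spectral gap replacement).} Next we use the elementary inequality
\[
\|\theta\|^2\lesssim \sigma^{-1}\|\partial_y\theta\|^2\cdot(\text{small})+\ldots
\]
More concretely, we split $\T$ into neighbourhoods of the zeros of $\cos y$ of width $r$ and their complement:
\[
\|\theta\|^2\le r\|\theta\|_\infty^2+r^{-2}\|\cos y\,\theta\|^2 .
\]
Combining this with the $\varsigma$-generated $\sin$-weighted dissipation gives $\frac{d}{d\tau}\Phi\le -\lambda\Phi$. Optimising the exponents under the scaling $\varsigma=\varsigma_k\mu^\ell$, as in the appendix optimisation the paper mentions, should produce
\[
\lambda\sim \beta_0^{5/4}\varsigma_k\mu^{(1+2\ell)/5}.
\]
The window $\ell\in(1/3,3/4)$ is exactly where every error term is dominated: $\ell<3/4$ makes the translation terms beat the stationary $\mu^{1/2}$ mechanism, and $\ell>1/3$ keeps the $\varsigma^2$-type errors below the gain.

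\textbf{Step 3 (back to $L^2$).} $\Phi(0)$ involves $a\|\partial_y\theta_0\|^2$, which is not controlled by $L^2$ data. We handle this with a transient layer. From the energy identity,
\[
\int_0^{\tau_0}\mu\|\partial_y\theta\|^2\le\tfrac12\|\theta_0\|^2 ,
\]
so there is a $\tau_*\le\tau_0$ with $a\|\partial_y\theta(\tau_*)\|^2\lesssim \frac{a}{\mu\tau_0}\|\theta_0\|^2$. Choosing $\tau_0\sim\lambda^{-1}$ gives the prefactor $1+\beta_0^{1/2}\mu^{-(1+2\ell)/5}$. We then apply the decay of $\Phi$ from $\tau_*$, and use monotonicity of $\|\theta\|_2$ on $[0,\tau_*]$, to obtain \eqref{hyp:L2_decay}.

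\textbf{Main obstacle.} The hardest part is Step 1: closing the cyclic $\cos/\sin$ hierarchy. Each new commutator level is generated with a factor $\varsigma$, and the cross terms between levels carry no favourable sign. My first attempt will be the six-term functional above with geometric scalings chosen so that each level's errors are absorbed by the next level's good term. If that fails to close, I will add the mixed $\langle\cos y\,\theta,\sin y\,\theta\rangle$ term to cancel the residual indefinite contribution.
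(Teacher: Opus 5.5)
\textbf{Verdict: genuine gap.} Your outer structure matches the paper, but the mechanism in Step~1 fails and a second essential ingredient is missing.

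\textbf{What is fine.} The splitting argument in Step~2 is essentially the Bedrossian--Coti Zelati inequality $\sigma^{1/2}\|\theta\|_2^2\lesssim C_s(\sigma\|\partial_y\theta\|_2^2+\|\cos y\,\theta\|_2^2)$ that the paper quotes. Step~3 is the paper's argument: choose $\tau_*$ in an initial layer via the energy identity, let $\Phi$ decay afterwards, and use monotonicity of $\|\theta\|_2$ before $\tau_*$.

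\textbf{The gap in Step~1.} Step~1 carries the whole result. The level you add, $e\,\mathrm{Re}\langle i\sin y\,\theta,\partial_y\theta\rangle$, does not produce the good terms you claim. For any real weight $w$,
\[
\frac{d}{d\tau}\mathrm{Re}\langle iw\theta,\partial_y\theta\rangle=-\int_{\T} w\cos y\,|\theta|^2\,dy-\varsigma\,\mathrm{Re}\langle iw'\theta,\partial_y\theta\rangle+(\mu\text{-terms}),
\]
because $[\partial_y,i\sin y]=i\cos y$ regardless of $w$. This is coercive only for $w\propto\cos y$. For $w=\sin y$ both terms, $-\int\sin y\cos y\,|\theta|^2$ and $-\varsigma\,\mathrm{Re}\langle i\cos y\,\theta,\partial_y\theta\rangle$, are indefinite. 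So neither $-e\|\sin y\,\theta\|_2^2$ nor $-e\varsigma\|\cos y\,\theta\|_2^2$ appears.

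The error is that you iterated commutators with $A$. In Villani's chain the next level is $[[A,B],B]=[i\cos y,-\varsigma\partial_y]=-i\varsigma\sin y$. Its cross term with $[A,B]$ is $\mathrm{Re}\langle i\cos y\,\theta,-i\varsigma\sin y\,\theta\rangle=-\varsigma\int\sin y\cos y\,|\theta|^2$. That is the zeroth-order term you keep only as a fallback, and it is in fact the engine of the proof. With $E_6=\int\sin y\cos y\,|\theta|^2$, the paper puts $-2\beta_1E_6$ into $\Phi$, and the transport gives
\[
\frac{d}{d\tau}(-E_6)=\varsigma\|\cos y\,\theta\|_2^2-\varsigma\|\sin y\,\theta\|_2^2+(\mu\text{-terms}).
\]
The $\cos$-part is absorbed by the good term $\beta_0\|\cos y\,\theta\|_2^2$ coming from the $\mathrm{Re}\langle i\cos y\,\theta,\partial_y\theta\rangle$ level. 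What remains is damping $\beta_1\varsigma\|\sin y\,\theta\|_2^2$, exactly at the critical points.

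\textbf{The missing ingredient: large weights.} Your claim that $\Phi\sim\|\theta\|_2^2+a\|\partial_y\theta\|_2^2$ uniformly in $\mu$ is incompatible with the target rate. A scaling count shows this. If the weights on $\|\cos y\,\theta\|_2^2$ and $\|\sin y\,\theta\|_2^2$ are $O(1)$, positivity of $\Phi$ forces the $E_6$ coefficient to be $O(1)$. Then the translation damping is at most $\sim\varsigma\sim\mu^{\ell}$. The $\partial_y\theta$-dissipation stays $\sim\mu$, so the spectral-gap route with $b^2\lesssim ad$ and $a^2\lesssim\mu b$ gives at most $\mu^{2/3}$. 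Both rates are slower than $\mu^{(1+2\ell)/5}$ for $\ell>\frac13$.

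The paper instead takes weights $\gamma_0\sim\mu^{-p}$ on $\|\cos y\,\theta\|_2^2$, $\gamma_1\sim\mu^{-q}$ on $\|\sin y\,\theta\|_2^2$, and $\beta_1\sim\mu^{-(p+q)/2}$, with $p>q$. This has three consequences.
\begin{itemize}
\item The diffusive parts of $\frac{d}{d\tau}\|\cos y\,\theta\|_2^2$ and $\frac{d}{d\tau}\|\sin y\,\theta\|_2^2$ combine through $\sin^2y+\cos^2y=1$ into $\mu\gamma_0\|\cos y\,\partial_y\theta\|_2^2+\mu\gamma_1\|\sin y\,\partial_y\theta\|_2^2\gtrsim\mu^{1-q}\|\partial_y\theta\|_2^2$.
\item The spectral gap with $\sigma\sim\mu^{1-q}$ then gives the rate $\mu^{(1-q)/2}$, while $\beta_1\varsigma/\gamma_1\sim\beta_0/\gamma_0\sim\mu^p$.
\item Matching these rates and closing the critical error $B_6E_6$, where $B_6\approx\varsigma(\gamma_0-\gamma_1)$, pins $p=\frac{1-q}{2}$ and $4\ell+5q=3$. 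Hence $p=\frac{1+2\ell}{5}$ and $q=\frac{3-4\ell}{5}$, and the condition $0<q<p$ is exactly $\ell\in(\frac13,\frac34)$.
\end{itemize}

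The large weight $\gamma_0$ is also where the prefactor $\mu^{-(1+2\ell)/5}$ comes from, via $\Phi(\tau_0)\lesssim\gamma_0\|\theta_0\|_2^2$. Under your uniform equivalence, Step~3 would give an $O(1)$ prefactor. This shows that the rate in Step~2 was asserted rather than derived.
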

By undoing the change of variables and parameter rescalings in \eqref{hyp:cov}--\eqref{hyp:c_0} Theorem \ref{hyp:thm:L2_phys} follows directly from this Proposition.

Before we can move to the proof of this theorem, we need to introduce the energy functional 
\begin{align}\label{hyp:functional}
	\Phi \coloneqq \frac{1}{2}\bigl[E_0(\tau) + \alpha_0  E_1(\tau) + 2\beta_0  E_3(\tau) + \gamma_0   E_4(\tau)- 2\beta_1   E_6(\tau)+ \gamma_1   E_7(\tau)\bigr],
\end{align}
where 
\begin{equation}\label{hyp:Es}
\begin{alignedat}{2}
  E_0(\tau) &= \|\theta(k,\cdot,\tau)\|_2^{2},              &\qquad
  E_1(\tau) &= \|\partial_y\theta(k,\cdot,\tau)\|_2^{2},    \\[5pt]
  E_2(\tau) &= \|\partial_y^2\theta(k,\cdot,\tau)\|_2^{2},  &\qquad
  E_3(\tau) &= \Re\langle i\partial_y v\,\theta(k,\cdot,\tau),\, \partial_y\theta(k,\cdot,\tau)\rangle, \\[5pt]
  E_4(\tau) &= \|\partial_y v\,\theta(k,\cdot,\tau)\|_2^{2}, &\qquad
  E_6(\tau) &= \Re\langle v\,\theta(k,\cdot,\tau),\, \partial_y v\,\theta(k,\cdot,\tau)\rangle, \\[5pt]
  E_7(\tau) &= \|v\,\theta(k,\cdot,\tau)\|_2^{2}.
\end{alignedat}
\end{equation}

and we defined
\begin{equation}
    \label{v:def}
     v(y)=\sin(y).
\end{equation}
We also record a structural consequence of our choice of parameters, which will be used repeatedly in the remainder of this section. In view of the parameter choice \eqref{hyp:param_choice}, the coefficients $\alpha_0,\beta_0,\gamma_0,\beta_1,\gamma_1$ satisfy the inequalities
\begin{align}
	\label{hyp:param_ineq}
    \frac{\beta_0^2}{\alpha_0}\leq \frac{\gamma_0}{16},
    \qquad
    \beta_1^2\leq \gamma_0 \gamma_1.
\end{align}
These relations guaranteed coercivity of the energy functional $\Phi$, in the sense that
\begin{equation}\label{hyp:coercivity}
\begin{multlined}
\frac{1}{8}\Bigl(4E_0+3\alpha_0E_1+2\gamma_0E_4+3\gamma_1E_7\Bigr)
\;\leq\; \Phi \\
\leq\;
\frac{1}{8}\Bigl(4E_0+5\alpha_0E_1+6\gamma_0E_4+5\gamma_1E_7\Bigr).
\end{multlined}
\end{equation}

Indeed, by Young's inequality and \eqref{hyp:param_ineq},
\begin{align*}
	2\beta_0 |E_3|
	\leq \frac{\alpha_0}{4}E_1+\frac{4\beta_0^2}{\alpha_0}E_4
	\leq \frac{\alpha_0}{4}E_1+\frac{\gamma_0}{4}E_4.
\end{align*}
Similarly, using \eqref{hyp:param_choice} and another application of Young's inequality leads to
\begin{align*}
	2\beta_1 |E_6|
	= 2 \frac{\beta_0^{3/4}}{\mu^{p/2}}\frac{\beta_0^{1/4}}{\mu^{q/2}}\,|E_6|
	\leq 4\frac{\beta_0^{3/2}}{\mu^{p}} E_4+\frac{\beta_0^{1/2}}{4\mu^{q}} E_7
	\leq \frac{\gamma_0}{4} E_4+\frac{\gamma_1}{4} E_7,
\end{align*}
where in the last step we used again \eqref{hyp:param_ineq}. From this \eqref{hyp:coercivity} follows immediately.
\\
\\
Now with this established, we can introduce the following Proposition, capturing the decay of the energy functional. This is fundamental for the proof of Theorem \ref{hyp:thm:L2_decay}.
\begin{prop}\label{hyp:prop:hypo_decay}
Let $\mu\ll 1$, $\Phi$ as defined in \eqref{hyp:functional},
\[
c=\varsigma_k\,\mu^{\ell}
\]
for some $\ell\in\big(\frac13,\frac34\big)$ and $\varsigma_k>0$ defined in \eqref{hyp:c_0}.
Further, choose the coefficients in \eqref{hyp:functional} as
\begin{align}\label{hyp:param_choice}
\alpha_0=\beta_0^{1/2} \mu^{p},\quad
\gamma_0=16\frac{\beta_0^{3/2}}{\mu^{p}},\quad
\beta_1=\frac{\beta_0}{\mu^{\frac{p+q}{2}}},\quad
\gamma_1=\frac{\beta_0^{1/2}}{\mu^{q}},
\end{align}
where \(\beta_0\in(0,1)\) is a fixed constant and 
\begin{equation}
\label{hyp:p_q_choice}
    p=\frac{1+2\ell}{5},\qquad q=\frac{3-4\ell}{5}.
\end{equation}
Then there exist a constant $\mu_0\in(0,1)$ such that for all $0<\mu\le\mu_0$,
\[
\dv{}{\tau}\Phi(\tau)+  \frac{\beta_0^{5/4}}{12C_{s}}\varsigma_k\mu^{\frac{1+2\ell}{5}}\Phi(\tau)\leq 0,
\]
where $C_{s} \ge 1$ is a constant independent of $\mu, \varsigma$.
\end{prop}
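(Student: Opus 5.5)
We will prove Proposition~\ref{hyp:prop:hypo_decay} by differentiating each piece of $\Phi$ along \eqref{hyp:eq}, collecting the good (negative definite) terms, and absorbing all remaining error terms with Young's inequality, using the parameter relations \eqref{hyp:param_ineq} and the choice \eqref{hyp:param_choice}--\eqref{hyp:p_q_choice}.

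\textbf{Step 1: Time derivatives of the individual energies.} We write $L=\varsigma\partial_y-i v+\mu\partial_y^2$, so that \eqref{hyp:eq} reads $\partial_\tau\theta=L\theta$. Since $v$ is real and $\partial_y$ is skew, we get
\[
\tfrac12\dot E_0=-\mu E_1,\qquad \tfrac12\dot E_1=-\mu E_2-\Re\langle i v'\theta,\partial_y\theta\rangle=-\mu E_2-E_3 .
\]
Thus the transport term in $E_1$ produces exactly $E_3$; this is why $E_3$ enters $\Phi$. For $E_3$, the multiplication part gives the key good term $-\|v'\theta\|^2=-E_4$. The diffusion part gives cross terms bounded by $\mu\|v''\theta\|\|\partial_y^2\theta\|$ and $\mu\|v'\partial_y\theta\|\|\partial_y^2\theta\|$. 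The translation part $\varsigma\partial_y$ gives $\varsigma\,\Re\langle i v''\theta,\partial_y\theta\rangle$, and with $v''=-v$ this equals $-\varsigma\Re\langle i v\theta,\partial_y\theta\rangle$. This term is the non-closing commutator.

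For $E_4$ and $E_7$, the multiplication part drops out because it is a pure phase. We obtain $-2\mu\|v'\partial_y\theta\|^2$ plus lower-order terms, and the translation terms $\varsigma\langle (v'^2)'\theta,\theta\rangle=-2\varsigma E_6$ and $+2\varsigma E_6$ respectively. This is the cosine/sine cycle. For $E_6=\tfrac12\langle (v^2)'\theta,\theta\rangle$, translation gives $\varsigma\langle (vv')'\theta,\theta\rangle=\varsigma(E_4-E_7)$, which is sign-indefinite. The decisive gain is therefore $-2\beta_1\cdot\tfrac12\dot E_6\ni -\beta_1\varsigma E_4$, together with the term $+\beta_1\varsigma E_7$, which must be dominated.

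\textbf{Step 2: Assemble the dissipation.} The aim is to show
\[
\dot\Phi\le -\tfrac12\bigl(\mu E_1+\alpha_0\mu E_2+\beta_0E_4+\beta_1\varsigma E_4+\gamma_1\mu\|v'\partial_y\theta\|^2\bigr)+\text{errors}.
\]
We then recover $E_7$ and $E_0$ spectrally. Since $v^2+v'^2=1$, we have $E_0=E_4+E_7$. We also use the interpolation estimate
\[
\|v\theta\|^2\lesssim \|v'\theta\|^{2/3}\|\partial_y\theta\|^{2/3}\|\theta\|^{2/3}+\dots,
\]
which holds because $\sin$ is small only near its zeros, in the spirit of the stationary spectral-gap lemma. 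Via Young's inequality this yields
\[
\mu^{p}E_0\lesssim \beta_0E_4+\mu E_1\cdot(\text{weight}) .
\]
The exponents $p=(1+2\ell)/5$ and $q=(3-4\ell)/5$ are exactly those for which, with $\varsigma\sim\varsigma_k\mu^\ell$, the terms $\beta_1\varsigma$, $\gamma_0\mu$, $\gamma_1\mu^{\cdot}$ and $\alpha_0$ all balance at the rate $\mu^{p}$. We will verify this by tracking powers. In particular, the condition $\ell\in(1/3,3/4)$ is needed for the following:
\begin{itemize}
\item $q>0$, so that $\gamma_1\gg1$;
\item $\varsigma\mu^{-p}\to$ the correct order;
\item the errors $\beta_1\varsigma E_7$ and $\gamma_1\varsigma E_6$ remain smaller than the good terms when $\mu\le\mu_0$.
\end{itemize}
Finally, $\Phi\lesssim E_0+\alpha_0E_1+\gamma_0E_4+\gamma_1E_7$ by the coercivity estimate \eqref{hyp:coercivity}. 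Since $\gamma_1E_7\le\gamma_1E_0$, the factor $\mu^{-q}$ enters the constant, which produces the combined rate $\varsigma_k\mu^{p}\beta_0^{5/4}/C_s$.

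\textbf{Main obstacle.} The hard step is closing the cycle $E_4\to E_6\to E_7\to E_6$ created by translation. The derivative $\dot E_6$ produces $+\varsigma E_7$ with the wrong sign. The derivatives $\dot E_4$ and $\dot E_7$ produce $\pm\varsigma E_6$; these are bounded by $\varsigma\sqrt{E_4E_7}$ and absorbed using $\beta_1^2\le\gamma_0\gamma_1$. We will first try to control $\beta_1\varsigma E_7$ by splitting $E_7=E_0-E_4$. The $E_0$ part is then absorbed by the enhanced-dissipation term $\mu^{p}E_0$ obtained from the interpolation of Step~2. This requires $\beta_1\varsigma\ll\mu^{p}$ up to $\beta_0$ powers, i.e.\ $\varsigma_k\mu^{\ell-(p+q)/2}\lesssim\beta_0^{1/4}\mu^{p}$. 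This bound is where the exact exponents and the small factor $\beta_0^{5/4}/(12C_s)$ are determined. If this is too lossy, the fallback is to use $\gamma_1\mu\|v'\partial_y\theta\|^2$ together with a Poincaré-type bound near the zeros of $v'$.
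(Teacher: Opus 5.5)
There is a genuine gap, and it starts with a sign error in your Step 1. For any real weight $w$, the translation term contributes
\[
\frac{d}{d\tau}\int_{\T} w|\theta|^2\,dy \;\ni\; 2\varsigma\,\Re\int_{\T} w\,\bar\theta\,\partial_y\theta\,dy=\varsigma\int_{\T} w\,\partial_y|\theta|^2\,dy=-\varsigma\int_{\T} w'|\theta|^2\,dy .
\]
You dropped this minus sign in all three cases. The correct contributions are $\tfrac12\dot E_4\ni+\varsigma E_6$, $\tfrac12\dot E_7\ni-\varsigma E_6$ and $\dot E_6\ni-\varsigma E_4+\varsigma E_7$ (Lemma~\ref{hyp:L:energy}). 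So the term $-2\beta_1E_6$ in $\Phi$ contributes $+\beta_1\varsigma E_4-\beta_1\varsigma E_7$ to $\frac{d}{d\tau}\Phi$. The $E_4$ part is the harmless error: it is absorbed by $\beta_0E_4$, since $\beta_1\varsigma=\beta_0\varsigma_k\mu^{p-q}\le\beta_0/4$ for small $\mu$ because $p>q$. The term $-\beta_1\varsigma E_7$ is the decisive \emph{gain}. You have these roles reversed, and the ``main obstacle'' you describe is an artifact of the sign.

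This is not cosmetic, because the argument cannot close without that gain. Since $\ell-\tfrac{p+q}{2}=\tfrac{6\ell-2}{5}=p-q$, the gain has coefficient $\beta_1\varsigma=\beta_0\varsigma_k\mu^{p-q}$. This is exactly the size $\mu^{p}\gamma_1=\beta_0^{1/2}\mu^{p-q}$ needed to produce $\lambda\gamma_1E_7$ with $\lambda\sim\mu^p$.

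Your substitutes for it fail:
\begin{itemize}
\item Your requirement $\varsigma_k\mu^{\ell-(p+q)/2}\lesssim\beta_0^{1/4}\mu^{p}$ is violated by a factor $\mu^{-q}\to\infty$ for every $\ell<3/4$.
\item Bounding $\gamma_1E_7\le\gamma_1E_0$ does not put $\mu^{-q}$ into a constant. It degrades the rate to $\mu^{p+q}$.
\item The cross term $\varsigma(\gamma_0-\gamma_1)E_6$ from $\dot E_4,\dot E_7$ is also absorbed by this same $E_7$-dissipation, via Young against $\beta_0E_4$ and $\beta_1\varsigma E_7$. That is where the constraint $2\ell+q\ge 3p$ comes from. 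The coercivity relation $\beta_1^2\le\gamma_0\gamma_1$ plays no role there.
\end{itemize}

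A second ingredient is missing for the rate itself. The balances for $E_4$ and $E_7$ produce $\gamma_0\mu\|v'\partial_y\theta\|_2^2$ and $\gamma_1\mu\|v\partial_y\theta\|_2^2$, not $\gamma_1\mu\|v'\partial_y\theta\|_2^2$. Because $v^2+v'^2=1$, these combine into a term $\gtrsim\beta_0^{3/2}\mu^{1-q}E_1$. Only with this enlarged $E_1$-weight does Lemma~\ref{hyp:L:spectral_gap}, applied with $\sigma\sim\beta_0^{1/2}\mu^{1-q}$, give $\beta_0^{5/4}\mu^{(1-q)/2}E_0=\beta_0^{5/4}\mu^{p}E_0$. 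With only $\mu E_1$, as in your Step 2, the lemma gives $\mu^{1/2}E_0$. That is the stationary rate, and it is too weak because $p<1/2$. Once the weight is right, your ad hoc interpolation inequality is unnecessary: the spectral-gap lemma is the appropriate tool.
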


\subsection{Proof of Theorem \ref{hyp:thm:L2_decay}}
The proof proceeds in three steps.
\paragraph{Step 1: Gradient Control}
Recall the $L^2$-energy balance from Lemma~\ref{hyp:L:energy}:
\[
  \frac12\frac{d}{dt}E_0(\tau) = -\mu E_1(\tau),
  \qquad E_0(\tau) = \|\theta(k,\cdot,\tau)\|_2^2, \quad E_1(\tau) = \|\partial_y\theta(k,\cdot,\tau)\|_2^2.
\]
We now define the timescale:
\begin{equation}\label{hyp:def:Tburn}
  T_\mu := \mu^{-(1-2p)}.
\end{equation}
Note that from Proposition \ref{hyp:prop:hypo_decay} we have the admissible range $\ell \in (1/3, 3/4)$ and by \eqref{hyp:p_q_choice} this implies $p \in (1/3, 1/2)$, which ensures $1-2p>0$.
Integrating the above identity over $(0, T_\mu)$ we find
\[
  \int_{0}^{T_\mu} E_1(s)\,ds = \frac{E_0(0) - E_0(T_\mu)}{2\mu} \le \frac{E_0(0)}{2\mu}.
\]
By the Mean Value Theorem, there exists a time $\tau_0 \in (0, T_\mu)$ such that:
\begin{equation}\label{hyp:E1_at_t0}
  E_1(\tau_0) \le \frac{1}{T_\mu}\int_{0}^{T_\mu} E_1(s)\,ds \le \frac{E_0(0)}{2\mu T_\mu} = \frac12 \mu^{-2p} E_0(0).
\end{equation}

\paragraph{Step 2: Bounding the functional at $\tau_0$.}
Using the upper coercivity bound \eqref{hyp:coercivity} at $\tau_0$ gives
\[
  \Phi(\tau_0) \le \frac{1}{8}\Bigl(4E_0(\tau_0) + 5\alpha_0E_1(\tau_0) + 6\gamma_0E_4(\tau_0) + 5\gamma_1E_7(\tau_0)\Bigr).
\]
Now for $v(y) = \sin y$, we have $\|\partial_y v\|_{L^\infty_y} = \|v\|_{L^\infty_y} = 1$, implying $E_4(\tau) \le E_0(\tau)$ and $E_7(\tau) \le E_0(\tau)$.
Further, since $E_0(\tau)$ is non-increasing we have $E_0(\tau_0) \le E_0(0)$.
These observations together with \eqref{hyp:E1_at_t0} and the parameter choices \eqref{hyp:param_choice} yield
\[
  \alpha_0 E_1(\tau_0) \le (\beta_0^{1/2}\mu^p) \cdot (\frac{1}{2}\mu^{-2p} E_0(0)) = \frac{1}{2}\beta_0^{1/2}\mu^{-p} E_0(0),
\]
\[
  \gamma_0 E_4(\tau_0) \le 16\beta_0^{3/2}\mu^{-p} E_0(0), \qquad
  \gamma_1 E_7(\tau_0) \le \beta_0^{1/2}\mu^{-q} E_0(0).
\]
By \eqref{hyp:param_choice} in Proposition \ref{hyp:prop:hypo_decay} we have $p \ge q$ for $\ell \ge 1/3$ and we therefore can bound $\mu^{-q} \le \mu^{-p}$ for $\mu\in(0,1]$.
Moreover, $\beta_0\in(0,1]$ is a fixed constant. Therefore there exists $C_1>0$ such that
\begin{equation}\label{hyp:Phi_t0_bound}
  \Phi(\tau_0) \le C_1 \Bigl(1 + \beta_0^{1/2}\mu^{-p}\Bigr) E_0(0).
\end{equation}

\paragraph{Step 3: Global in time estimate.}
Define
\begin{equation}\label{hyp:def:lambda_mu}
  \lambda_\mu := \frac{1}{12C_s}\,\beta_0^{5/4}\,\varsigma_k\,\mu^{p}.
\end{equation}
By Proposition~\ref{hyp:prop:hypo_decay}, for all $\tau\ge \tau_0$,
\begin{equation}\label{hyp:eq:Phi_decay}
  \Phi(\tau)\le e^{-\lambda_\mu(\tau-\tau_0)}\,\Phi(\tau_0).
\end{equation}
Using the lower coercivity bound in \eqref{hyp:coercivity}, namely $E_0(\tau)\le 2\Phi(\tau)$, it follows that
\begin{equation}\label{hyp:eq:E0_from_Phi}
  E_0(\tau)\le 2e^{-\lambda_\mu(\tau-\tau_0)}\Phi(\tau_0)
  = 2e^{\lambda_\mu\tau_0}e^{-\lambda_\mu\tau}\Phi(\tau_0),
  \qquad \tau\ge\tau_0.
\end{equation}

A uniform bound on $e^{\lambda_\mu\tau_0}$ is obtained as follows. Since $\tau_0\le T_\mu$,
\[
  \lambda_\mu\tau_0 \le \lambda_\mu T_\mu
  = \frac{1}{12C_s}\beta_0^{5/4}\varsigma_k\,\mu^{p}\mu^{-(1-2p)}
  = \frac{1}{12C_s}\beta_0^{5/4}\varsigma_k\,\mu^{3p-1}.
\]
Because $p\in(1/3,1/2)$, one has $3p-1>0$, and therefore $\mu^{3p-1}\le 1$ for $\mu\in(0,1]$.
Using $\varsigma_k\le \varsigma_1=c_0\alpha^{\ell-1}$ from \eqref{hyp:c_0}, define
\begin{equation}\label{hyp:def:C2}
  C_2 := \exp\!\Bigl(\frac{1}{12C_s}\beta_0^{5/4}c_0\alpha^{\ell-1}\Bigr).
\end{equation}
Then $e^{\lambda_\mu\tau_0}\le C_2$.

Substituting $e^{\lambda_\mu\tau_0}\le C_2$ into \eqref{hyp:eq:E0_from_Phi} and using \eqref{hyp:Phi_t0_bound} yields, for all $\tau\ge\tau_0$,
\[
  E_0(\tau)
  \le 2C_2 e^{-\lambda_\mu\tau}\Phi(\tau_0)
  \le 2C_1C_2\Bigl(1+\beta_0^{1/2}\mu^{-p}\Bigr)e^{-\lambda_\mu\tau}E_0(0).
\]

For $\tau\in[0,\tau_0]$, monotonicity of $E_0$ implies $E_0(\tau)\le E_0(0)$.
Since $\tau\le\tau_0$, one has $e^{\lambda_\mu(\tau_0-\tau)}\ge 1$ and hence
\[
  E_0(\tau)\le E_0(0)
  \le e^{\lambda_\mu(\tau_0-\tau)}E_0(0)
  = e^{\lambda_\mu\tau_0}e^{-\lambda_\mu\tau}E_0(0)
  \le C_2 e^{-\lambda_\mu\tau}E_0(0).
\]

Now define
\begin{equation}\label{hyp:def:Ced}
  C_{\mathrm{ed}} := 2C_2\max\{C_1,1\},
\end{equation}
then, for all $\tau\ge 0$,
\[
  E_0(\tau)
  \le C_{\mathrm{ed}}\Bigl(1+\beta_0^{1/2}\mu^{-p}\Bigr)e^{-\lambda_\mu\tau}E_0(0).
\]
Recall that $E_0(\tau)=\|\theta(k,\cdot,\tau)\|_2^2$ and $E_0(0)=\|\theta_0\|_2^2$. Then by invoking the definitions \eqref{hyp:p_q_choice} and \eqref{hyp:def:lambda_mu} we obtain \eqref{hyp:L2_decay}.

\subsubsection*{Proof of Proposition \ref{hyp:prop:hypo_decay} - Hypocoercivity}
We organise the proof in three steps. First, we differentiate the functional \eqref{hyp:functional} in time and derive a system of energy identities; the resulting error terms are then estimated individually. Second, we apply the spectral estimate of Lemma~\ref{hyp:L:spectral_gap} to recover a form compatible with the coercivity inequality \eqref{hyp:coercivity}. Finally, we choose the auxiliary parameters to satisfy the constraints introduced along the argument and to optimize the resulting bound.\\

\paragraph{Step 1: Time-derivative of $\Phi$.}
We start by recalling the following energy balances provided by Lemma \ref{hyp:L:energy} in the Appendix.
\begin{enumerate}
		\item $\displaystyle\frac{1}{2}\dv{}{\tau}E_0 = -\mu E_1,$
		\item $\displaystyle\frac{1}{2}\dv{}{\tau}E_1 = -\mu E_2 - E_3,$
		\item $\displaystyle\dv{}{\tau}E_3 = -E_4 - 2\mu\Re\inner{i\partial_y v\partial_y \theta}{\partial_y^2 \theta} - \mu\Re\inner{i\partial_y^2 v\theta }{\partial_y^2 \theta}-\varsigma\Re \inner{i\partial_{y}^2 v \theta}{\partial_{y} \theta},$
		\item $\displaystyle\frac{1}{2}\dv{}{\tau}E_4 = -\mu \norm{\partial_y v\partial_y \theta}_2^2 - 2\mu \Re \inner{\partial_y v\partial_y^2 v \theta}{\partial_y \theta}+\varsigma E_6,$
        \item $\displaystyle\dv{}{\tau} E_6= -\varsigma E_4+\varsigma E_7 -4 \mu E_6-2 \mu \inner{v \partial_y \theta}{\partial_y v \partial_y \theta},  $
        \item $\displaystyle \frac{1}{2}\dv{}{\tau}E_7=-\varsigma E_6-\mu \norm{v \partial_y \theta}_2^{2}+\mu E_4 -\mu E_7$.
	\end{enumerate}
We now take the energy functional $\Phi$ defined in \eqref{hyp:functional} and compute it's time derivative.
\begin{align*}
     \dv{}{\tau}\Phi&+\mu E_1+\mu \alpha_0 E_2 + \beta_0  E_4\\
        ={}& -\alpha_0 E_3 -2\mu \beta_0 \Re\inner{i\partial_y v \partial_y\theta}{\partial_{y}^2 \theta}-\mu \beta_0\Re\inner{i \partial_{y}^2v \theta}{\partial_{y}^2 \theta}\\
        &-\mu\gamma_0\norm{\partial_y v\partial_{y} \theta}_2^{2}-2\mu\gamma_0 \Re \inner{\partial_y v\partial_{y}^2v \theta}{\partial_{y} \theta}\\
        &-\varsigma\beta_0\Re \inner{i\partial_{y}^2 v \theta}{\partial_{y} \theta}+\varsigma \gamma_0E_6\\
        &+\beta_1 \varsigma E_4-\beta_1 
        \varsigma E_7 + \beta_1 4 \mu E_6+\beta_1 2 \mu \inner{v \partial_y \theta}{\partial_y v \partial_y \theta}\\
        &-\varsigma\gamma_1 E_6-\gamma_1\mu \norm{v \partial_y \theta}_2^{2}+\gamma_1\mu E_4 -\gamma_1\mu E_7.
\end{align*}
We can simplify this by collecting terms as
\begin{equation}
\label{hyp:Phi_original}
\begin{aligned}
     \dv{}{\tau}\Phi&+\mu E_1+\mu \alpha_0 E_2 + (\beta_0-B_4)  E_4+B_7 E_7\\
        ={}& -\alpha_0 E_3 -2\mu \beta_0 \Re\inner{i\partial_y v \partial_y\theta}{\partial_{y}^2 \theta}-\mu \beta_0\Re\inner{i \partial_{y}^2v \theta}{\partial_{y}^2 \theta}\\
        &-\mu\gamma_0\norm{\partial_y v\partial_{y} \theta}_2^{2}-2\mu\gamma_0 \Re \inner{\partial_y v\partial_{y}^2v \theta}{\partial_{y} \theta}\\
        &-\varsigma\beta_0\Re \inner{i\partial_{y}^2 v \theta}{\partial_{y} \theta}+B_6E_6\\
        &+\beta_1 2 \mu \inner{v \partial_y \theta}{\partial_y v \partial_y \theta}
        -\gamma_1\mu \norm{v \partial_y \theta}_2^{2},
\end{aligned}
\end{equation}
where we defined
\begin{align*}
    B_4\coloneqq\beta_1 \varsigma +\gamma_1 \mu,\quad
  B_7\coloneqq\beta_1 \varsigma + \gamma_1 \mu, \quad
  B_6\coloneqq \varsigma \gamma_0-\varsigma\gamma_1+4\beta_1  \mu.
\end{align*}
\emph{Fixing the parameter regime.}\\
We proceed by simplifying $B_4$, $B_6$, and $B_7$ for the range $0<\ell <1$.
Recall that by \eqref{hyp:param_choice} and \eqref{hyp:c_0} we can write
\begin{subequations}
\begin{align}
  B_4 &= \beta_1 \varsigma +\gamma_1 \mu
       = \beta_0\varsigma_k\mu^{\frac{2\ell-p-q}{2}}
         +\beta_0^{1/2}\mu^{1-q}, \label{hyp:B4} \\[0.3em]
  B_7 &= \beta_1 \varsigma + \gamma_1 \mu
       = \beta_0 \varsigma_k\mu^{\frac{2\ell-p-q}{2}}
         +\beta_0^{1/2}\mu^{1-q}, \label{hyp:B7} \\[0.3em]
  B_6 &= \varsigma \gamma_0-\varsigma\gamma_1+4\beta_1  \mu
       = \beta_0^{3/2}\varsigma_k\mu^{\ell-p}
         -\beta_0^{1/2}\varsigma_k\mu^{\ell-q}
         +4\beta_0\mu^{\frac{2-p-q}{2}} \label{hyp:B6}
\end{align}
\end{subequations}

As $\mu \ll 1$, we need to keep all exponents of $\mu$ non-negative to avoid blow-up. For this we recall that $0< p,q <1$ and hence require
\begin{align}
\label{hyp:pq_y_fix}
    \ell-p\geq 0 \iff p\leq \ell,\quad \ell-q\geq 0\iff q\leq \ell.
\end{align}
Further, since the sign in front of $B_4$ is negative in \eqref{hyp:Phi_original} we need to bound it.
Since $\beta_0$ is a constant and $\varsigma_k$ is bound by a constant as $\varsigma_k\leq \varsigma_1$ (cf. \eqref{hyp:c_0}),
we can choose a sufficiently small $\mu_0\in(0,1)$ so that for all $0<\mu\le \mu_0$,
\[
\mu^{\frac{2\ell-p-q}{2}}\le \frac{1}{8 \varsigma_1}\,
\]
and
\[
\mu^{1-q}\le \frac{\beta_0^{1/2}}{8}.
\]
Hence
\begin{equation}
\label{hyp:B4_bound}
B_4=\beta_0 \varsigma_k\mu^{\frac{2\ell-p-q}{2}}+\beta_0^{1/2}\mu^{1-q}
\le \frac14\,\beta_0.
\end{equation}
We then can apply \eqref{hyp:B4_bound} to \eqref{hyp:Phi_original} to get
\begin{equation}
\label{hyp:Phi_cleaned}
\begin{aligned}
     \dv{}{\tau}\Phi&+\mu E_1+\mu \alpha_0 E_2 + \frac{3}{4}\beta_0 E_4+ B_7 E_7\\
        \leq{}& -\alpha_0 E_3 -2\mu \beta_0 \Re\inner{i\partial_y v \partial_y\theta}{\partial_{y}^2 \theta}-\mu \beta_0\Re\inner{i \partial_{y}^2v \theta}{\partial_{y}^2 \theta}\\
        &-\mu\gamma_0\norm{\partial_y v\partial_{y} \theta}_2^{2}-2\mu\gamma_0 \Re \inner{\partial_y v\partial_{y}^2v \theta}{\partial_{y} \theta}\\
        &-\varsigma\beta_0\Re \inner{i\partial_{y}^2 v \theta}{\partial_{y} \theta}-B_6 E_6\\
        &+\beta_1 2 \mu \inner{v \partial_y \theta}{\partial_y v \partial_y \theta}-\gamma_1\mu \norm{v \partial_y \theta}_2^{2}.
\end{aligned}
\end{equation}
\paragraph{Step 2: Control of errors.}
From this simplified form, we next turn to the control of the error terms. We define as error terms all contributions that do not appear in the coercivity estimate \eqref{hyp:coercivity}, i.e.\ all terms other than $E_0, E_1, E_4,$ and $E_7$. We now estimate each such error term by the corresponding coercive norms.
\\
\\
\emph{Control standard error terms.}\\
We first note that the contributions in the first two lines on the right-hand side of \eqref{hyp:Phi_cleaned} are classical, in the sense that they appear in the established literature on autonomous systems (cf. for example \cite{Bedrossian2017, cotizelatigallay}). Therefore they can be controlled by standard Young-type estimates.
 Briefly, these are
\begin{align*}
		\lvert \alpha_0 E_3 \rvert
		&\leq \frac{1}{2}\frac{\alpha_0^2}{\beta_0}E_1+\frac{1}{2}\beta_0 E_4, \\
		\lvert 2\mu \beta_0 \Re\inner{i \partial_y v\partial_y \theta}{\partial_y^2\theta} \rvert
		&\leq \frac{\mu}{2}\alpha_0 E_2+2\mu  \frac{\beta_0^2}{\alpha_0}  \norm{\partial_y v \partial_y \theta}_2^{2} , \\
		\lvert \mu \beta_0 \Re \inner{i \partial_y^2v\theta}{\partial_y^2\theta} \rvert
		&\leq \frac{\mu}{2}\alpha_0 E_2+\frac{\mu }{2}\frac{\beta_0^2}{\alpha_0}\norm{\partial_y^2v\theta}_2^{2} , \\
		\lvert 2\mu\gamma_0\Re\inner{\partial_y v\partial_y^2 v \theta}{\partial_y\theta} \rvert &\leq \frac{\mu }{2}\gamma_0  \lVert \partial_y v \partial_y \theta \rVert_2^2 +2 \gamma_0 \mu \norm{\partial_y^2v\theta}_2^{2} .
	\end{align*}
    Together with \eqref{hyp:param_ineq} this gives us
    \begin{equation}
\begin{aligned}
\label{hyp:Phi_post_standard}
   \dv{}{\tau}\Phi&+\frac{\mu}{2}E_1+\frac{\beta_0}{4}E_4+B_7 E_7+\frac{\mu\gamma_0}{2}\norm{\partial_y v \partial_{y} \theta}_2^{2}+\gamma_1\mu \norm{v \partial_y \theta}_2^{2}\\
    \leq{} &\mu   \gamma_0 E_0
    -\varsigma\beta_0\Re \inner{i\partial_{y}^2 v \theta}{\partial_{y} \theta}-B_6 E_6 +\beta_1 2 \mu \inner{v \partial_y \theta}{\partial_y v \partial_y \theta},
\end{aligned}
\end{equation}
where we used that $\norm{\partial_y^2 v}^2_{L^\infty_y}\leq 1$. Note that the additional norms on the left-hand side can be neglected, as they are strictly positive.
 We proceed to control the three remaining inner products on the right-hand side. \\
 \\
 \emph{Control non-standard error terms.}\\
These are produced either by the cross-stream transport term in \eqref{hyp:eq} or from derivatives of $E_6, E_7$. Hence, they do not appear in the classical computations and we refer to them as non-standard.\\
\\
1. Control of $\beta_1 2 \mu \inner{v \partial_y \theta}{\partial_y v \partial_y \theta}$:\\
By employing Young's inequality and using our choice of $\beta_1$ we find 
\begin{align*}
    2\beta_1  \mu \inner{v \partial_y \theta}{\partial_y v \partial_y \theta}={}&2\frac{\beta_0^{\frac{1}{4}}}{\mu^{\frac{p}{2}}}\frac{\beta_0^{\frac{3}{4}}}{\mu^{\frac{q}{2}}} \mu \inner{v \partial_y \theta}{\partial_y v \partial_y \theta}\\
    \leq{}& \mu \frac{\beta_0^{\frac{1}{2}}}{\mu^q} \norm{v \partial_y \theta}_2^{2}+\mu \frac{\beta_0^{\frac{3}{2}}}{\mu^p} \norm{\partial_y v \partial_y \theta}_2^{2}\\
    \leq{}&\frac{\gamma_1\mu }{2}\norm{v \partial_y \theta}_2^{2}+\frac{\gamma_0\mu}{8} \norm{\partial_y v \partial_y \theta}_2^{2}.
\end{align*}
Hence \eqref{hyp:Phi_post_standard} simplifies to
\begin{equation}
\label{hyp:Phi_post_standard_1}
\begin{aligned}
   \dv{}{\tau}\Phi&+\frac{\mu}{2}E_1+\frac{\beta_0}{4}E_4+B_7E_7+\frac{\mu\gamma_0}{4}\norm{\partial_y v \partial_{y} \theta}_2^{2}+\frac{\gamma_1\mu }{2} \norm{v \partial_y \theta}_2^{2}\\
    \leq{}&\mu   \gamma_0 E_0-\varsigma\beta_0\Re \inner{i\partial_{y}^2 v \theta}{\partial_{y} \theta}-B_6E_6.
\end{aligned}
\end{equation}
2. Control of $\varsigma\beta_0\Re \inner{i\partial_{y}^2 v \theta}{\partial_{y} \theta}$:\\
First we note that since $\partial_y^2 v =- v$ we have that
\begin{align*}
    c\beta_0\Re \inner{i\partial_{y}^2 v \theta}{\partial_{y} \theta}=-\varsigma\beta_0\Re \inner{i v \theta}{\partial_{y} \theta}.
\end{align*}
From there we again use Young's inequality and the fact that we have $c=\varsigma_k\mu^\ell$ in order to obtain
\begin{align*}
    \abs{\varsigma\beta_0\Re \inner{i v \theta}{\partial_{y} \theta}}&= \beta_0\varsigma_k\mu^\ell\abs{\Re \inner{i v \theta}{\partial_{y} \theta}}\\
    &=\beta_0\varsigma_k\mu^{\frac{2\ell-p-q}{4}}\mu^{\frac{2\ell+p+q}{4}}\abs{\Re \inner{i v \theta}{\partial_{y} \theta}}\\
    &\leq \frac{1}{2}\beta_0\varsigma_k\mu^{\frac{2\ell-p-q}{2}} E_7 +\frac{1}{2}\beta_0\varsigma_k\mu^{\frac{2\ell+p+q}{2}}E_1.
\end{align*}
Hence \eqref{hyp:Phi_post_standard_1} simplifies, by expanding $B_7$ according to \eqref{hyp:B7}, to
\begin{equation}
\label{hyp:Phi_post_standard_2}
\begin{aligned}
    \dv{}{\tau}\Phi&+\frac{\mu}{2}E_1+\frac{\beta_0}{4}E_4\\
    &+\qty(\frac{1}{2}\beta_0\varsigma_k\mu^{\frac{2\ell-p-q}{2}}+\beta_0^{1/2}\mu^{1-q})E_7+\frac{\mu\gamma_0}{4}\norm{\partial_y v \partial_{y} \theta}_2^{2}+\frac{\gamma_1\mu }{2} \norm{v \partial_y \theta}_2^{2}\\
    \leq{}&\mu   \gamma_0 E_0-B_6E_6+\frac{1}{2}\beta_0\varsigma_k\mu^{\frac{2\ell+p+q}{2}}E_1.
    \end{aligned}
\end{equation}
3. Control of $B_6E_6$:\\
We recall from \eqref{hyp:B6} that, $B_6=\beta_0^{3/2}\varsigma_k\mu^{\ell-p}-\beta_0^{1/2}\varsigma_k\mu^{\ell-q}+4\beta_0\mu^{\frac{2-p-q}{2}}$.\\
We now introduce the condition 
\begin{equation}
    \label{hyp:p_q_fix}
    p > q,
\end{equation}
as then, by noting that $\beta_0$ is a constant, we can again choose $\mu_0$ so that for all $0<\mu\le \mu_0$,
\[
\abs{B_6}=\abs{\beta_0^{3/2}\varsigma_k\mu^{\ell-p}-\beta_0^{1/2}\varsigma_k\mu^{\ell-q}
+4\beta_0\mu^{\frac{2-p-q}{2}}}\le 2\beta_0^{3/2} \varsigma_k\mu^{\ell-p}+4\beta_0\mu^{\frac{2-p-q}{2}}.
\]
Then once again by Young's inequality we find
\begin{align*}
    \abs{B_6 E_6}&\leq\qty(2\beta_0^{3/2} \varsigma_k\mu^{\ell-p}+4\beta_0\mu^{\frac{2-p-q}{2}})\abs{\inner{v \theta}{\partial_y v \theta}}\\
    &\leq\qty(\frac{\beta_0}{16}+\frac{\beta_0}{16}) E_4 + \qty(16\beta_0^{2}\varsigma_k^2\mu^{2(\ell-p)}+64\beta_0 \mu^{2-p-q})E_7.
\end{align*}
In order to absorb this estimate, we need to introduce the condition 
\begin{align}
\tag{C1}
\label{hyp:mu:E_6_condition}
    \mu^{2(\ell-p)}\leq \mu^{\frac{2\ell-p-q}{2}}
\end{align}
and note that
$
    \mu^{2-p-q}< \mu^{1-p}
$
as $p,q<1$.\\
Therefore, we obtain
\begin{align*}
    \abs{B_6 E_6}
    &\leq\frac{\beta_0}{8} E_4 + \qty(16\beta_0^{2}\varsigma_k^2\mu^{\frac{2\ell-p-q}{2}}+64\beta_0 \mu^{1-q})E_7.
\end{align*}
We now take $\beta_0$ small s.t.
\begin{align}
\label{hyp:beta_0_conditon_error}
64 \beta_0^{1/2} \le \frac12
\iff \beta_0 \le \frac{1}{128^2},
\end{align}
and note that $\varsigma_k^2\leq \varsigma_k$ (cf. \eqref{hyp:c_0}), this gives us the estimate
\begin{align*}
    \abs{B_6 E_6}
    &\leq\frac{\beta_0}{8} E_4 + \qty(\frac{\beta_0}{4}\varsigma_k\mu^{\frac{2\ell-p-q}{2}}+\frac{\beta_0^{1/2}}{2} \mu^{1-q})E_7.
\end{align*}
Then we can bound the $E_6$-term in \eqref{hyp:Phi_post_standard_2} to finally get
\begin{equation*}
\begin{aligned}
   \dv{}{\tau}\Phi&+\frac{\mu}{2}E_1+\frac{\beta_0}{8}E_4+\qty(\frac{1}{4}\beta_0\varsigma_k\mu^{\frac{2\ell-p-q}{2}}+\frac{1}{2}\beta_0^{1/2}\mu^{1-q}) E_7\\
   &+\frac{\mu\gamma_0}{4}\norm{\partial_y v \partial_{y} \theta}_2^{2}+\frac{\gamma_1\mu }{2} \norm{v \partial_y \theta}_2^{2}\\
    \leq{}&\mu   \gamma_0 E_0+\frac{1}{2}\beta_0^{3}\mu^{\frac{2\ell+p+q}{2}}E_1.
\end{aligned}
\end{equation*}
Which, by slightly relaxing the constant in front of $E_7$, we can simplify to
\begin{equation}
\label{hyp:Phi_post_non_standard}
\begin{aligned}
   \dv{}{\tau}\Phi&+\frac{\mu}{2}E_1+\frac{\beta_0}{8}E_4+\frac{1}{4}B_7E_7+\frac{\mu\gamma_0}{4}\norm{\partial_y v \partial_{y} \theta}_2^{2}+\frac{\gamma_1\mu }{2} \norm{v \partial_y \theta}_2^{2}\\
    \leq{}&\mu   \gamma_0 E_0+\frac{1}{2}\beta_0^{3}\mu^{\frac{2\ell+p+q}{2}}E_1.
\end{aligned}
\end{equation}
We have now successfully controlled all error terms. Therefore we now need to control norms that appear with the wrong sign, namely $E_0, E_1$ on the right-hand side. \\
\\
\emph{Flow specific optimisations.}\\
We will now leverage the fact that we have a specific expression for our flow as $v(y)=\sin(y)$. This allows us to create an $E_1$-term with an improved scaling in $\mu$.
We note that
\begin{align*}
    v^2+(\partial_y v)^2=1,
\end{align*}
leading to the relation
\begin{align}
    E_1= \norm{\partial_y v \partial_{y} \theta}_2^{2}+\norm{v \partial_y \theta}_2^{2}.
\end{align}
Recall that $p\geq q$ (cf. \eqref{hyp:p_q_fix}) and that $\beta_0\leq \frac{1}{8}$ (cf. \eqref{hyp:beta_0_conditon_error}), then we have 
\begin{align*}
    \frac{\mu\gamma_0}{4}\norm{\partial_y v \partial_{y} \theta}_2^{2}+\frac{\gamma_1\mu }{2}\norm{v \partial_y \theta}_2^{2}&=\frac{16 \beta_0^{3/2}\mu^{1-p}}{4}\norm{\partial_y v \partial_{y} \theta}_2^{2}+\frac{\beta_0^{1/2}\mu^{1-q}}{2}\norm{v \partial_y \theta}_2^{2}\\
    &\geq 4\beta_0^{3/2}\mu^{1-q}\qty[\norm{\partial_y v \partial_{y} \theta}_2^{2}+\norm{v \partial_y \theta}_2^{2}]\\
    &= 4\beta_0^{3/2}\mu^{1-q} E_1.
\end{align*}
Hence we can rewrite \eqref{hyp:Phi_post_non_standard} as
\begin{align}
   \dv{}{\tau}\Phi&+\qty(\frac{\mu}{2}+4\beta_0^{3/2}\mu^{1-q})E_1+\frac{\beta_0 }{8}E_4+\frac{1}{4}B_7E_7\\
    \leq{}&\mu   \gamma_0 E_0+\frac{1}{2}\beta_0^{3}\mu^{\frac{2\ell+p+q}{2}}E_1.
\end{align}
Now we introduce the additional constraint
\begin{align}
\tag{C2}
    \label{hyp:mu:E_1_condition}
    \mu^{\frac{2\ell+p+q}{2}}\leq \mu^{1-q}.
\end{align}
Then we have 
\begin{align*}
   &\dv{}{\tau}\Phi+\qty(\frac{\mu}{2}+3\beta_0^{3/2}\mu^{1-q})E_1+\frac{\beta_0 }{8}E_4+B_7 E_7
    \leq\mu   \gamma_0 E_0.
\end{align*}
Since $q>0$ the leading order coefficient in $\mu$ in front of $E_1$ is $\mu^{1-q}$. Therefore we may drop the comparatively very small term of $\frac{\mu}{2}$. Similarly we recall that $B_7=\beta_0 \varsigma_k\mu^{\frac{2\ell-p-q}{2}}
     +\beta_0^{1/2}\mu^{1-q}$ and that in our regime of $0<q<p<1$ and $0<\ell<1$ we always have
     \begin{align*}
         \mu^{\frac{2\ell-p-q}{2}}>\mu^{1-q}.
     \end{align*}
Hence the leading order coefficient in $\mu$ in $B_7$ is determined by it's first term and we can safely drop the second. We obtain
\begin{align}
\label{hyp:pre_spectral_final}
   &\dv{}{\tau}\Phi+3\beta_0^{3/2}\mu^{1-q}E_1+\frac{\beta_0 }{8}E_4+\frac{\beta_0}{4}\varsigma_k\mu^{\frac{2\ell-p-q}{2}} E_7
    \leq\mu   \gamma_0 E_0.
\end{align}
\paragraph{Step 3: Spectral estimate.}
Through the above procedure, we have now improved the $\mu$-scaling of $E_1$ and controlled the respective term on the right. This leaves us to address the $E_0$-term on the right, while also noting that we require $E_0$ on the left.
We will do this via a spectral gap inequality provided by Lemma \ref{hyp:L:spectral_gap}, which we recall as
	\begin{equation}
		\sigma^{\frac{1}{2}} E_0 \lesssim C_{s} [\sigma E_1 + E_4]\,.
	\end{equation}
 In order to apply this to \eqref{hyp:pre_spectral_final} we observe that
 \begin{align}
 \label{hyp:SG}
   2\beta_0^{3/2}\mu^{1-q}E_1+\frac{\beta_0 }{16}E_4=\frac{\beta_0 }{16}\qty[32 \beta_0^{1/2}\mu^{1-q}E_1+E_4]\gtrsim \frac{\beta_0^{5/4}}{C_{s}} \mu^{(1-q)/2}E_0,
 \end{align}
 where we choose $\sigma =32 \beta_0^{1/2}\mu^{1-q}$.\\
 Now in order to absorb the right-hand side into this term we need that
 \begin{align}
 \label{hyp:SG_helper}
     \mu \gamma_0= 16\beta_0^{3/2}\mu^{1-p}\leq \frac{\beta_0^{5/4}}{2C_{s}} \mu^{(1-q)/2}.
 \end{align}
 This can only be achieved under the condition that
 \begin{align}
 \tag{C3}
     \label{hyp:mu:E_0_condition}
     \mu^{(1-q)/2}\geq \mu^{1-p}.
 \end{align}
 We keep this as a further constraint and assume it to be true for now. We then need 
 \begin{align*}
     16\beta_0^{3/2}\mu^{1-p} \leq\frac{\beta_0^{5/4}}{2C_{s}} \mu^{1-p} 
     \implies\beta_0^{1/4}\leq \frac{1}{32 C_{s}},
 \end{align*}
 which can simply be achieved by picking 
 \begin{align}
     \label{hyp:beta_0_condition_spectral}
     \beta_0\leq \frac{1}{32^4 C_{s}^4}.
 \end{align}
 Then we can indeed apply \eqref{hyp:SG} to \eqref{hyp:pre_spectral_final} in order to obtain
 \begin{align*}
    &\dv{}{\tau}\Phi+ \frac{\beta_0^{5/4}}{C_{s}} \mu^{(1-q)/2}E_0+\beta_0^{3/2}\mu^{1-q}E_1+\frac{\beta_0}{4}\varsigma_k\mu^{\frac{2\ell-p-q}{2}} E_7
    \leq \mu   \gamma_0 E_0
 \end{align*}
 and by \eqref{hyp:SG_helper} we can now absorb the right-hand side into the left to get
 \begin{align}\label{hyp:post_spectral}
    &\dv{}{\tau}\Phi+ \frac{\beta_0^{5/4}}{2C_{s}} \mu^{(1-q)/2}E_0+\beta_0^{3/2}\mu^{1-q}E_1+\frac{\beta_0 }{16}E_4+\frac{\beta_0}{4}\varsigma_k\mu^{\frac{2\ell-p-q}{2}} E_7
    \leq 0.
 \end{align}
\paragraph{Step 3: Final Optimisation.}
In view of \eqref{hyp:post_spectral}, the coercivity estimate \eqref{hyp:coercivity} can be used to derive the desired bound for $\Phi$. This requires an optimisation of the parameters subject to the constraints imposed throughout the proof.\\
As detailed in Section~\ref{hyp:A:optimisation} of the Appendix, \eqref{hyp:post_spectral} can be rewritten as
\begin{align} &\dv{}{\tau}\Phi+ 
    \frac{\beta_0^{5/4}}{12C_{s}}\varsigma_k\mu^{p}\qty[ 4E_0 + 5\alpha_0 E_1+ 6\gamma_0 E_4 +5\gamma_1 E_7]\leq 0. 
\end{align} 
The admissible range of $\ell$ and explicit formulas for $p,q$ are then given by \eqref{hyp:y_final_range} as 
\[ \ell\in\Big(\frac13,\frac34\Big)\ ,\qquad p=\frac{1+2\ell}{5},\qquad q=\frac{3-4\ell}{5}. \]
Lastly, we use the coercivity of the functional \eqref{hyp:coercivity} to immediately get
\begin{align} &\dv{}{\tau}\Phi+  \frac{\beta_0^{5/4}}{12C_{s}}\varsigma_k\mu^{p}\Phi\leq 0. 
\end{align}

\section{Large translation speeds}
\label{sec:lc}
In this section we prove Theorem~\ref{lc:thm:large_c}. To this end, we compare the solution of the advection--diffusion equation~\eqref{adv}, which we recall here as
\begin{equation}\label{lc:eq:f} 
    \partial_t \Theta + \sin(y-ct)\,\partial_x \Theta = \nu \Delta \Theta,
\end{equation}
with the solution $\Theta_H$ of the corresponding heat equation
\begin{equation}\label{lc:eq:fH} 
    \partial_t \Theta_H = \nu \Delta \Theta_H,
\end{equation}
subject to the same initial datum $\Theta(\cdot,0) = \Theta_H(\cdot,0) = \Theta_0$. 

We introduce the deviation $w := \Theta - \Theta_H$, which, by linearity, satisfies the forced diffusion equation
\begin{equation}\label{lc:eq:w} 
    \partial_t w - \nu \Delta w = -\sin(y-ct)\,\partial_x \Theta, 
    \qquad w(\cdot,0) = 0.
\end{equation}

\subsection{Proof of Theorem \ref{lc:thm:large_c}}
Testing \eqref{lc:eq:w} with $w$ and integrating over $\T^2\times(0,t)$ yields
\begin{equation}\label{lc:eq:energy-id-int}
    \frac12\|w\|_2^2+\nu\int_0^t \|\nabla w\|_2^2ds
    =-\int_0^t\int_{\T^2}\sin(y-cs)\,\partial_x \Theta\,w\,dx\,dy\, ds.
\end{equation}
We now need to control the right-hand side of the above equation, for which we recall that $\partial_s\cos(y-cs)= c \sin(y-cs)$ and hence via integration by parts we find
\begin{align*}
-\int_0^t&\int_{\T^2}\sin(y-cs)\,\partial_x \Theta\,w\,dx\,dy\, ds\\
=&-\frac{1}{c}\int_0^t\int_{\T^2}\partial_s\cos(y-cs)\,\partial_x \Theta\,w\,dx\,dy\, ds\\
=&\frac{1}{c}\int_0^t\int_{\T^2}\cos(y-cs)\partial_s(\partial_x \Theta\,w )\,dx\,dy\, ds\\
&-\frac{1}{c}\int_{\T^2}\cos(y-ct)\,\partial_x \Theta\,w\,dx\,dy\\
=&\frac{1}{c}\int_0^t\int_{\T^2}\cos(y-cs)(\partial_x\partial_s \Theta\,w +\partial_x \Theta\,\partial_sw )\,dx\,dy\, ds\\
&-\frac{1}{c}\int_{\T^2}\cos(y-ct)\,\partial_x \Theta\,w\,dx\,dy.
\end{align*}

Differentiating \eqref{lc:eq:f} with respect to $x$ we have
\[
\partial_x\partial_s \Theta+\sin(y-cs)\,\partial_x^2 \Theta=\nu\Delta \partial_x \Theta.
\]
Thus, using \eqref{lc:eq:w}, we find
\begin{align*}
    \frac{1}{c}\int_0^t&\int_{\T^2}\cos(y-cs)(\partial_x\partial_s \Theta\,w +\partial_x \Theta\,\partial_sw )\,dx\,dy\, ds\\
    &-\frac{1}{c}\int_{\T^2}\cos(y-ct)\,\partial_x \Theta\,w\,dx\,dy.\\
    =&\frac{1}{c}\int_0^t\int_{\T^2}\cos(y-cs)\qty(-\sin(y-cs )\partial_x^2 \Theta +\nu \Delta \partial_x \Theta)w\,dx\,dy\, ds\\
    &+\frac{1}{c}\int_0^t\int_{\T^2}\cos(y-cs)\qty(-\sin(y-cs )\partial_x \Theta +\nu \Delta w)\partial_x \Theta\,dx\,dy\, ds\\
    &-\frac{1}{c}\int_{\T^2}\cos(y-ct)\,\partial_x \Theta\,w\,dx\,dy.\\
    \leq{}& 
    -\frac{\nu}{c}\int_0^t\int_{\T^2}\nabla(\cos(y-cs)w)\cdot \nabla \partial_x \Theta\,dx\,dy\, ds\tag{I}\\
    &-\frac{\nu}{c}\int_0^t\int_{\T^2}\nabla(\cos(y-cs)\partial_x \Theta) \cdot\nabla w\,dx\,dy\, ds\tag{II}\\
    &+\frac{1}{c}\int_0^t\int_{\T^2}|\partial_x^2 \Theta ||w|\,dx\,dy\, ds\tag{III}\\
    &+\frac{1}{c}\int_0^t\int_{\T^2}|\partial_x \Theta |^2\,dx\,dy\, ds\tag{IV}\\
    &-\frac{1}{c}\int_{\T^2}\cos(y-ct)\,\partial_x \Theta\,w\,dx\,dy.
    \tag{BT}
\end{align*}
Let us estimate the terms (I)-(III) and the boundary term (BT) individually.\\
\\
For (I) we note that
\begin{align*}
\nabla&\big(\cos(y-cs)\,w\big)\cdot \nabla(\partial_x \Theta)
=
\begin{pmatrix}
\partial_x\big(\cos(y-cs)\,w\big) \\
\partial_y\big(\cos(y-cs)\,w\big)
\end{pmatrix}
\cdot
\begin{pmatrix}
\partial_x^2 \Theta \\
\partial_x\partial_y \Theta
\end{pmatrix}
\\[4pt]
&\quad= -\sin(y-cs) w \partial_x\partial_y \Theta+\cos(y-cs)\,\partial_y w\partial_x\partial_y \Theta+
\cos(y-cs)\,\partial_x w\,\partial_x^2 \Theta,
\end{align*}
and 
\begin{align*}
\nabla&\big(\cos(y-cs)\,\partial_x \Theta\big)\cdot \nabla w
=
\begin{pmatrix}
\partial_x\big(\cos(y-cs)\,\partial_x \Theta\big)\\[2pt]
\partial_y\big(\cos(y-cs)\,\partial_x \Theta\big)
\end{pmatrix}
\cdot
\begin{pmatrix}
\partial_x w\\[2pt]
\partial_y w
\end{pmatrix}
\\[6pt]
&\quad=-\sin(y-cs)\,\partial_x \Theta \partial_y w+ \cos(y-cs)\,\partial_x\partial_y \Theta\partial_y w
+\cos(y-cs)\,\partial_x^2 \Theta\,\partial_x w.
\end{align*}
Now by using the Cauchy-Schwarz and Young's inequalities we find
\begin{align*}
   |(I)|\leq{}& \frac{\nu}{c} \int_0^t\int_{\T^2}|w||\partial_x\partial_y\Theta|\,dx\,dy\, ds
   +\frac{\nu}{c} \int_0^t\int_{\T^2}|\partial_y w||\partial_x\partial_y\Theta|\,dx\,dy\, ds\\
   &+\frac{\nu}{c} \int_0^t\int_{\T^2}|\partial_x w||\partial_{x}^2\Theta|\,dx\,dy\, ds\\
  \leq{}& \frac{\nu}{2c}\int_0^t \norm{w}^2_2 \,ds+\frac{\nu}{2c}\int_0^t \norm{\partial_x\partial_y\Theta}^2_2 \, ds\\
  &+\frac{\nu}{2c}\int_0^t \norm{\partial_y w}^2_2 \,ds+\frac{\nu}{2c}\int_0^t \norm{\partial_x\partial_y\Theta}^2_2 \, ds\\
  &+\frac{\nu}{2c}\int_0^t \norm{\partial_x w}^2_2 \,ds+\frac{\nu}{2c}\int_0^t \norm{\partial_{x}^2\Theta}^2_2 \, ds,\\
     |(II)|\leq{}& \frac{\nu}{c} \int_0^t\int_{\T^2}|\partial_y w||\partial_{x}\Theta|\,dx\,dy\, ds
   +\frac{\nu}{c} \int_0^t\int_{\T^2}|\partial_y w||\partial_x\partial_y\Theta|\,dx\,dy\, ds\\&+\frac{\nu}{c} \int_0^t\int_{\T^2}|\partial_x w||\partial_{x}^2\Theta|\,dx\,dy\, ds\\
   \leq{}&\frac{\nu}{2c}\int_0^t \norm{\partial_y w}^2_2 \,ds+\frac{\nu}{2c}\int_0^t \norm{\partial_{x}\Theta}^2_2 \, ds\\&
   +\frac{\nu}{2c}\int_0^t \norm{\partial_y w}^2_2 \,ds+\frac{\nu}{2c}\int_0^t \norm{\partial_x\partial_y\Theta}^2_2 \, ds\\
   &+\frac{\nu}{2c}\int_0^t \norm{\partial_x w}^2_2 \,ds+\frac{\nu}{2c}\int_0^t \norm{\partial_{x}^2\Theta}^2_2 \, ds,\\
    |(III)|\leq{}& \frac{1}{c}\qty(\int_0^t\int_{\T^2}|w|^2\,dx\,dy\, ds)^{1/2}\qty(\int_0^t\int_{\T^2}|\partial_x^2 \Theta|^2\,dx\,dy\, ds)^{1/2}\\
    ={}&\frac{1}{c^{1/2}}\qty(\int_0^t\int_{\T^2}|w|^2\,dx\,dy\, ds)^{1/2}\frac{1}{c^{1/2}}\qty(\int_0^t\int_{\T^2}|\partial_x^2 \Theta|^2\,dx\,dy\, ds)^{1/2}\\
    \leq{}& \frac{1}{2c}\int_0^t\int_{\T^2}|w|^2\,dx\,dy\, ds+\frac{1}{2c}\int_0^t\int_{\T^2}|\partial_x^2 \Theta|^2\,dx\,dy\, ds,\\
|\mathrm{BT}|
\le& \frac{1}{c}\,\|\partial_x \Theta\|_{2}\,\|w\|_{2}\\
\le& \frac{1}{2c}\|w\|_2^2 + \frac{1}{2c}\|\partial_x \Theta\|_2^2.
\end{align*}
All together then gives us an estimate for the RHS of \eqref{lc:eq:energy-id-int} and the combined estimate
\begin{align*}
     \frac12\|w\|_2^2&+\nu\int_0^t \|\nabla w\|_2^2ds\\
    \leq{}& \frac{1}{2c}\int_0^t\norm{w}^2_2\, ds+\frac{1}{2c}\int_0^t\norm{\partial_x^2 \Theta}^2_2\, ds
    +\frac{1}{c}\int_0^t\norm{\partial_x \Theta}^2_2\, ds\\
    &+\frac{\nu}{2c}\int_0^t \norm{w}^2_2 \,ds+\frac{\nu}{2c}\int_0^t \norm{\partial_x\partial_y\Theta}^2_2 \, ds
  +\frac{\nu}{2c}\int_0^t \norm{\partial_y w}^2_2 \,ds\\&+\frac{\nu}{2c}\int_0^t \norm{\partial_x\partial_y\Theta}^2_2 \, ds
  +\frac{\nu}{2c}\int_0^t \norm{\partial_y w}^2_2 \,ds+\frac{\nu}{2c}\int_0^t \norm{\partial_{x}\Theta}^2_2 \, ds\\
&   +\frac{\nu}{2c}\int_0^t \norm{\partial_y w}^2_2 \,ds+\frac{\nu}{2c}\int_0^t \norm{\partial_x\partial_y\Theta}^2_2 \, ds
   +\frac{\nu}{c}\int_0^t \norm{\partial_x w}^2_2 \,ds\\&+\frac{\nu}{c}\int_0^t \norm{\partial_{x}^2\Theta}^2_2 \, ds+\frac{1}{2c}\|w\|_2^2 + \frac{1}{2c}\|\partial_x \Theta\|_2^2.
\end{align*}
Now using the fact that $\norm{\partial_y w}^2_2 \leq \norm{\nabla w}^2_2,\, \norm{\partial_x w}^2_2 \leq \norm{\nabla w}^2_2 $ we can write the above estimate as
\begin{align*}
   &\qty(\frac12-\frac{1}{2c})\|w\|_2^2+\nu\qty(1-\frac{5}{2}\frac{1}{c})\int_0^t \|\nabla w\|_2^2ds\\
   &\quad \leq \frac{\nu}{2c}\int_0^t \norm{w}^2_2 \,ds+\frac{1}{2c}\int_0^t\norm{w}^2_2\, ds+\frac{1}{2c}\int_0^t\norm{\partial_x^2 \Theta}^2_2\, ds\\
    &\qquad+\frac{1}{c}\int_0^t\norm{\partial_x \Theta}^2_2\, ds + \frac{1}{2c}\|\partial_x \Theta\|_2^2\\
    &\qquad+C_1 \frac{\nu}{c}\int_0^t \qty(\norm{\partial_x\partial_y\Theta}^2_2 +\norm{\partial_{x}^2\Theta}^2_2 +\norm{\partial_{x}\Theta}^2_2) \, ds.
\end{align*}
Since we are interested in the large $c$ regime we can safely assume that
\begin{align*}
    c>\frac{5}{2},
\end{align*} 
which guarantees that the second term is positive and we can therefore drop it.\\
Hence we can write the overall estimate as
\begin{align}
    \label{lc:to-gronwall}
    B \|w\|_2^2 \leq A \int_0^t \norm{w}^2_2 \,ds+  F_{\nu}(t),
\end{align}
where
\begin{align}
\label{lc:A_B}
    B=&\qty(\frac12-\frac{1}{2c}), \quad A=\qty(\frac{\nu}{2c}+\frac{1}{2c}),\\
    F_{\nu}(t)&=\frac{1}{c}\int_0^t\norm{\partial_x \Theta}^2_2\, ds + \frac{1}{2c}\|\partial_x \Theta\|_2^2\\
    &+C_1 \frac{\nu}{c}\int_0^t\qty( \norm{\partial_x\partial_y\Theta}^2_2 +\norm{\partial_{x}^2\Theta}^2_2 +\norm{\partial_{x}\Theta}^2_2 )\, ds.
\end{align}

We split the estimate of $F_{\nu}(t)$ in several steps.
\begin{enumerate}
    \item For $\displaystyle\int_0^t\norm{ \partial_{x}\Theta}^2_2 \, ds$:\\
    Testing the PDE \eqref{lc:eq:f} with $\partial_x^2 \Theta$ and integrating over $\T^2$ yields
    \begin{align*}
           \int_{\T^2}\partial_t \Theta\,\partial_x^2 \Theta\,dx\,dy
    ={}&\nu\int_{\T^2}\Delta f\,\partial_x^2 \Theta\,dx\,dy-\int_{\T^2}\sin(y-ct)\partial_x \Theta\,\partial_x^2 \Theta\,dx\,dy,
    \end{align*}
    that is 
    \begin{align*}
    -&\int_{\T^2}\partial_t \partial_x \Theta\,\partial_x \Theta\,dx\,dy\\
    =&\nu\int_{\T^2}[(\partial_x^2 \Theta)^2+\partial_x^2 f\,\partial_y^2 \Theta+]\,dx\,dy-\int_{\T^2}\sin(y-ct)\frac{\partial_x (\partial_x \Theta)^2}{2}\,dx\,dy.
    \end{align*}
    Noting that the second term on the right vanishes by periodicity, we get
    \begin{align*}
       \frac12 \dv{}{t}\int_{\T^2} |\partial_x \Theta|^2\,dx\,dy
    ={}&-\nu\int_{\T^2}[(\partial_x^2 \Theta)^2+\partial_x^2 \Theta\,\partial_y^2 \Theta]\,dx\,dy.
    \end{align*}
    Using the  Young's inequality
     \begin{align}
     \label{lc:es:1}
        \frac{1}{2}\dv{}{t}\norm{\partial_x \Theta}^2_2
\leq  \frac32 \nu\norm{\partial_x^2 \Theta}^2_2+ \frac12\nu\norm{\partial_y^2 \Theta}^2_2.
    \end{align}
 \item For $\displaystyle\int_0^t\norm{ \partial_{x}^2 \Theta}^2_2 \, ds$:\\
 We start by differentiating the PDE \eqref{lc:eq:f} twice w.r.t. to $x$ in order to get
 \begin{align*}
     \partial_t \partial_x^2 \Theta=\nu\Delta \partial_x^2 \Theta-\sin(y-ct)\,\partial_x^3 \Theta.
 \end{align*}
  Testing this with $\partial_x^2 \Theta$ and integrating over $\T^2$ yields
  \begin{align*}
       \int_{\T^2}&\partial_t \,\partial_x^2 \Theta\,\partial_x^2 \Theta\,dx\,dy\\
    &=\nu\int_{\T^2}\Delta \partial_x^2 \Theta\,\partial_x^2 \Theta\,dx\,dy-\int_{\T^2}\sin(y-ct)\partial_x^3 \Theta\,\partial_x^2 \Theta\,dx\,dy.
  \end{align*}
  Thanks to the periodicity
        \begin{align}
            \frac12 \dv{t}  \|\partial_x^2 \Theta\|^2_2={}&-\nu 
\|\partial_y \partial_x^2 \Theta\|^2_2-\nu \|\partial_x^3 \Theta\|^2_2\le 0.\label{lc:es:2}
  \end{align}
  \item For $\displaystyle\int_0^t \norm{\partial_x\partial_y\Theta}^2_2\, ds$: \\
 Differentiating  \eqref{lc:eq:f} once w.r.t. to $y$ and $x$ we get
 \begin{align*}
     \partial_t \partial_x\partial_y \Theta={}&\nu\Delta \partial_x\partial_y \Theta-\partial_y(\sin(y-ct)\,\partial_x^2 \Theta)\\
    ={}&\nu\Delta \partial_x\partial_y \Theta-\cos(y-ct)\,\partial_x^2 \Theta-\sin(y-ct)\,\partial_y\partial_x^2 \Theta.
 \end{align*}
  Testing this with $\partial_x\partial_y \Theta$ and integrating over $\T^2$, integtegrating by parts, and using the periodicity   yield

  \begin{align*}
       \frac12 \dv{}{t} \int_{\T^2}|\,\partial_x\partial_y \Theta|^2\,dx\,dy
    ={}&\nu\int_{\T^2} \partial_{x}^3\partial_y \Theta\,\partial_x\partial_y \Theta\,dx\,dy+\nu\int_{\T^2} \partial_{x}\partial_y^3 \Theta\,\partial_x\partial_y \Theta\,dx\,dy\\
    &-\int_{\T^2}\cos(y-ct)\partial_x^2 \Theta\,\partial_x\partial_y \Theta\,dx\,dy\\
    &-\int_{\T^2}\sin(y-ct)\partial_x \frac{(\partial_x\partial_y \Theta)^2}{2}\,dx\,dy\\
    ={}&-\nu\int_{\T^2} |\partial_x^2\partial_y \Theta|^2\,dx\,dy-\nu\int_{\T^2} |\partial_x\partial_y^2\Theta|^2\,dx\,dy\\
    &-\int_{\T^2}\cos(y-ct)\partial_x^2 \Theta\,\partial_x\partial_y \Theta\,dx\,dy.
  \end{align*}
  By bounding $\cos(y-ct)\leq 1$ and applying Young's inequality,
  \begin{align*}
       \frac12 \dv{}{t} \int_{\T^2}|\,\partial_x\partial_y \Theta|^2\,dx\,dy\le{}&-\nu\int_{\T^2} |\partial_x^2\partial_y \Theta|^2\,dx\,dy-\nu\int_{\T^2} |\partial_x\partial_y^2\Theta|^2\,dx\,dy\\
    &+\frac12 \int_{\T^2}|\partial_x^2 \Theta|^2\,dx\,dy+\frac12 \int_{\T^2}|\partial_x\partial_y \Theta|^2\,dx\,dy,
  \end{align*}
and then  
\begin{align}
  \label{lc:es:3}
       \frac12 \dv{}{t} \|\,\partial_x\partial_y \Theta\|_2^2\le \frac12 \|\partial_x^2 \Theta\|_2^2+\frac12 \|\partial_x\partial_y \Theta\|^2_2.
  \end{align}
  \item Lastly we need to control $\norm{ \partial_{y}^2\Theta}^2_2$. 
  Differentiating \eqref{lc:eq:f} twice w.r.t. to $y$ 
 \begin{align*}
     \partial_t \partial_y^2 \Theta={}&\nu\Delta \partial_y^2 x-\partial_y^2(\sin(y-ct)\,\partial_x \Theta)\\
     ={}&\nu \partial_y^4 \Theta+\nu \partial_x^2\partial_y^2 \Theta
     +\sin(y-ct)\,\partial_x \Theta\\
     &-2\cos(y-ct)\,\partial_x\partial_y \Theta-\sin(y-ct)\,\partial_y^2 \partial_x \Theta.
 \end{align*}
  Testing this with $\partial_y^2 \Theta$, integrating over $\T^2$,  and integrating by parts yields
  \begin{align*}
       \frac12 \dv{}{t}&\int_{\T^2}|\partial_y^2 \Theta|^2\,dx\,dy\\
    =&-\nu\int_{\T^2}|\partial_y^3 \Theta|^2 \,dx\,dy
    -\nu\int_{\T^2} |\partial_y^2\partial_x \Theta|^2\,dx\,dy\\
    &+\int_{\T^2}\sin(y-ct)\partial_x \Theta\,\partial_y^2 \Theta\,dx\,dy-2\int_{\T^2}\cos(y-ct)\partial_x\partial_y \Theta\,\partial_y^2 \Theta\,dx\,dy\\
    &-\int_{\T^2}\sin(y-ct)\partial_x \frac{(\partial_{y}^2 \Theta)^2}{2}\,dx\,dy.
  \end{align*}
  The last term again vanishes by periodicity and the terms scaling with $\nu$ carry a negative sign can therefore be dropped. Then by employing Young's inequality and again bounding $\cos(y-ct)\leq 1, \,\sin(y-ct)\leq 1$ on the leftover terms we obtain
  \begin{align}
  \label{lc:es:4}
       \frac12 \dv{t} \|\partial_y^2 \Theta\|^2_2
    \le \frac12 \norm{\partial_x \Theta}^2+\frac32 \norm{\partial_y^2 \Theta}^2+\norm{\partial_x\partial_y \Theta}^2.
  \end{align}
\end{enumerate}
Now combining all four estimates \eqref{lc:es:1}, \eqref{lc:es:2}, \eqref{lc:es:3}, \eqref{lc:es:4} we get 
\begin{equation}
\label{eq:GMC}
\begin{split}
    \frac12 \dv{t} &\qty( \|\partial_x \Theta\|^2_2
    +\|\partial_x^2 \Theta\|^2_2
    +\|\partial_x\partial_y \Theta\|^2_2
    +\|\partial_y^2 \Theta\|^2_2) \\
    \leq{}& 
    \frac{3\nu+1}{2}\,\|\partial_x^2 \Theta\|_2^2
    + \frac{\nu+3}{2}\,\|\partial_y^2 \Theta\|_2^2
    \\&+ \frac{3}{2}\,\|\partial_x\partial_y \Theta\|_2^2
    + \frac{1}{2}\,\|\partial_x \Theta\|_2^2.
\end{split}
\end{equation}
Setting
\begin{align*}
    \Psi(t)
    :=\|\partial_x \Theta\|^2_2
     +\|\partial_x^2 \Theta\|^2_2
     +\|\partial_x\partial_y \Theta\|^2_2
     +\|\partial_y^2 \Theta\|^2_2,
\end{align*}
\eqref{eq:GMC} reads
\begin{align*}
    \frac12 \Psi'(t)
    \le{}& \frac{3\nu+1}{2}\,\|\partial_x^2 \Theta\|_2^2
    + \frac{\nu+3}{2}\,\|\partial_y^2 \Theta\|_2^2
    \\&+ \frac{3}{2}\,\|\partial_x\partial_y \Theta\|_2^2
    + \frac{1}{2}\,\|\partial_x \Theta\|_2^2.
\end{align*}
For $0<\nu\le 1$ we have
\[
\frac{3\nu+1}{2} \le 2,
\qquad
\frac{\nu+3}{2} \le 2,
\qquad
\frac{3}{2} \le 2,
\qquad
\frac{1}{2} \le 2,
\]
and hence
\[
\frac12 \Psi'(t)
\le 
2\,\Psi(t),
\]
that gives
\begin{equation*}
    \Psi(t)\leq  e^{4 t}\Psi_0,
\end{equation*}
where $\Psi_0:=\Psi(0)$.

\paragraph{Bounding $w$.}
Now we go back to \eqref{lc:to-gronwall} and recall it as
\begin{align*}
    B \|w\|_2^2 \leq A \int_0^t \norm{w}^2_2 \,ds+  F_{\nu}(t).
\end{align*}
and that
\begin{align*}
     F_{\nu}(t)={}&\frac{1}{c}\int_0^t\norm{\partial_x \Theta}^2_2\, ds + \frac{1}{2c}\|\partial_x \Theta\|_2^2\\&
    +C_1 \frac{\nu}{c}\int_0^t\qty( \norm{\partial_x\partial_y\Theta}^2_2 +\norm{\partial_{x}^2\Theta}^2_2 +\norm{\partial_{x}\Theta}^2_2) \, ds\\
    \leq{}&\frac{1}{c}\int_0^t\norm{\partial_x \Theta}^2_2\, ds + \frac{1}{2c}\|\partial_x \Theta\|_2^2+C_1 \frac{\nu}{c}\int_0^t \Psi(s) \, ds\\
    \leq{}&\frac{1}{c}\int_0^t\norm{\partial_x \Theta}^2_2\, ds + \frac{1}{2c}\|\partial_x \Theta\|_2^2+C_1 \frac{\nu}{c}\int_0^te^{4 s}\Psi_0 \, ds.
\end{align*}
Since 
\begin{align*}
     \Psi(t)={}&\|\partial_x \Theta\|^2_2
     +\|\partial_x^2 \Theta\|^2_2+\|\partial_x\partial_y \Theta\|^2_2+\|\partial_y^2 \Theta\|^2_2\\
     \leq{}& \|\nabla^2 \Theta\|^2_2+\|\partial_x \Theta\|^2_2,
\end{align*}
we obtain
\begin{align*}
     F_{\nu}(t)\le{}&\frac{1}{c}\int_0^t\norm{\partial_x \Theta}^2_2\, ds+ \frac{1}{2c}\|\partial_x \Theta\|_2^2\\&+C_1 \frac{\nu}{c}\int_0^t e^{4 s}\qty(\|\nabla^2 \Theta_0\|^2_2+\|\partial_x \Theta_0\|^2_2 \,) ds\\
     \le{}&\frac{1}{c}\int_0^t\norm{\partial_x \Theta}^2_2\, ds + \frac{1}{2c}\|\partial_x \Theta\|_2^2\\&+\frac{C_1}{4} \frac{\nu}{c}\qty(\|\nabla^2 \Theta_0\|^2_2+\|\partial_x \Theta_0\|^2_2) (e^{4 t}-1).
\end{align*}
So overall we have
\begin{align}
\label{lc:pre-gronwall}
    B \|w\|_2^2 \leq{}& A \int_0^t \norm{w}^2_2 \,ds+  \frac{1}{c}\int_0^t\norm{\partial_x \Theta}^2_2\, ds + \frac{1}{2c}\|\partial_x \Theta\|_2^2\\&+\frac{C_1}{4}\frac{\nu}{c}\qty(\|\nabla^2 \Theta_0\|^2_2+\|\partial_x \Theta_0\|^2_2) (e^{4 t}-1).
    \notag
\end{align}
\paragraph{Control of the remaining derivatives}
We now need to control the last remaining terms on the right–hand side involving $\partial_x \Theta$. From the definition of $\Psi$ we immediately have
\[
\|\partial_x \Theta\|_2^2 \le \Psi(t), \qquad
\int_0^t \|\partial_x \Theta\|_2^2\,ds \le \int_0^t \Psi(s)\,ds.
\]
Using the bound $\Psi(s)\le e^{4 s}\Psi_0$ for $0\le s\le t$ we obtain
\begin{align*}
\int_0^t \|\partial_x \Theta\|_2^2\,ds
&\le \Psi_0 \int_0^t e^{4 s}\,ds
= \frac{\Psi_0}{4}\bigl(e^{4 t}-1\bigr),\\
\|\partial_x \Theta\|_2^2 \le \Psi(t) &\le e^{4 t}\Psi_0.
\end{align*}
Consequently, the additional $\partial_x \Theta$–contributions in \eqref{lc:pre-gronwall} satisfy
\[
\frac{1}{c}\int_0^t\|\partial_x \Theta\|_2^2\,ds
+ \frac{1}{2c}\|\partial_x \Theta\|_2^2
\le
\frac{\Psi_0}{c4}\bigl(e^{4 t}-1\bigr)
+ \frac{\Psi_0}{2c}e^{4 t}
\le \frac{C}{c}\,\Psi_0\,e^{4 t},
\]
for some constant $C>0$ independent of $t$ and $c$. Since $\Psi_0\lesssim
\|\nabla^2 \Theta_0\|^2_2+\|\partial_x \Theta_0\|^2_2$, we can absorb these terms into the same type of bound as the last term in \eqref{lc:pre-gronwall}. Thus there exists a constant $C>0$, independent of $t$ and $c$, such that
\begin{align}
\label{lc:pre-gronwall_2}
    B \|w\|_2^2
    \leq{}& A \int_0^t \norm{w}^2_2 \,ds
    \\&\notag+C \qty(
    \frac{1}{c}+\frac{\nu}{c})\qty(\|\nabla^2 \Theta_0\|^2_2+\|\partial_x \Theta_0\|^2_2) e^{4 t}.
\end{align}

\paragraph{Final estimate.}
From \eqref{lc:A_B} recall that
\[
    B = \frac12 - \frac{1}{2c}, 
    \qquad
    A = \frac{\nu}{2c} + \frac{1}{2c},
\]
and therefore we have $B>0$ for $c>1$. Set
\begin{equation}
\label{lc:u_F}
u(t):=\|w(\cdot,t)\|_2^2,
\qquad
F_0:=
C \Bigl(
    \frac{1}{c}+\frac{\nu}{c}\Bigr)
    \Bigl(\|\nabla^2 \Theta_0\|^2_2+\|\partial_x \Theta_0\|^2_2\Bigr),
\end{equation}
so that \eqref{lc:pre-gronwall_2} can be written as
\[
B\,u(t) \le A \int_0^t u(s)\,ds + F_0e^{4 t},\qquad t\ge0.
\]
Dividing by $B>0$ gives
\[
u(t)\le \alpha(t)+\beta\int_0^t \,u(s)\,ds,\qquad t\ge0,
\]
with
\[
\alpha(t):=\frac{F_0}{B}e^{4 t},
\qquad
\beta:=\frac{A}{B}.
\]
Note that we have $\beta\ge0$, and $\alpha$ is non-decreasing in $t$. Hence we can apply the inhomogeneous Grönwall inequality to obtain
\[
u(t)\le \alpha(t)e^{\beta t}
= \frac{F_0}{B}
   \exp\Bigl(\bigl(4+\tfrac{A}{B}\bigr)t\Bigr),
\qquad t\ge0.
\]
By invoking the definitions of \eqref{lc:u_F} we conclude that
\begin{align}
\label{lc:post-gronwall}
\|w\|_2^2
\le{}& \frac{C}{B}
\Bigl(
    \frac{1}{c}+\frac{\nu}{c}\Bigr)
    \Bigl(\|\nabla^2 \Theta_0\|^2_2+\|\partial_x \Theta_0\|^2_2\Bigr)
    \exp\Bigl(\bigl(4+\tfrac{A}{B}\bigr)t\Bigr).
\end{align}
Lastly we note that $0<B< \frac{1}{2}$, $c>1$ and $\nu \ll 1$, therefore we have the bound
\[
4+\frac{A}{B}=4 +\frac1B \qty(\frac{\nu}{2c} + \frac{1}{2c} )\leq  C.
\]
Therefore we find
\begin{align}
\label{lc:post-opt}
\|w\|_2^2
\le{}& C
   \Bigl(
    \frac{1}{c}+\frac{\nu}{c}\Bigr)
    \Bigl(\|\nabla^2 \Theta_0\|^2_2+\|\partial_x \Theta_0\|^2_2\Bigr)
    e^{C t}.
\end{align}

\appendix
\section{Appendix}
\subsection{Mixing}
In this appendix we provide a detailed proof of \eqref{mix:Gy_est_pre} from Section \ref{sec:mix}. The notation follows the conventions of that section.
\label{mix:Appendix-A}
\paragraph{Proof of \eqref{mix:Gy_est_pre}.}
We follow the method of non-stationary phase as outlined in \cite{Stein,tao2007}.
We recall that on the integration domain of $I_\mathcal{D}$ we have the condition
\begin{align}
\label{mix:A:cos_bound}
  \Bigl|\cos\Bigl(\frac{ct}{2}-y\Bigr)\Bigr|\ge\delta,
\end{align}
and that
\[
  \partial_y\psi(y,t)
  = \frac{2}{c}\sin\Bigl(\frac{ct}{2}\Bigr)
                     \cos\Bigl(\frac{ct}{2}-y\Bigr).
\]
We now need to assume that $0<t\le T$ with $T\le \frac{\pi}{c}$, so that
$\frac{ct}{2}\in(0,\frac{\pi}{2}]$. On this interval we have the inequality
\[
  \sin z \;\ge\; \frac{2}{\pi}\,z,
  \qquad z\in\Bigl[0,\frac{\pi}{2}\Bigr],
\]
hence
\[
  \sin\Bigl(\frac{ct}{2}\Bigr)
  \;\ge\; \frac{2}{\pi}\,\frac{ct}{2}
  \;=\; \frac{c}{\pi}\,t.
\]
Therefore using \eqref{mix:A:cos_bound}, we have
\begin{align*}
  |\partial_y\psi(y,t)|
  \;=\; \frac{2}{c}\Bigl|\sin\Bigl(\frac{ct}{2}\Bigr)\Bigr|
       \Bigl|\cos\Bigl(\frac{ct}{2}-y\Bigr)\Bigr|
  \;\ge\; \frac{2}{c}\cdot \frac{c}{\pi}t\cdot\delta
  \;=\; \frac{2}{\pi}\,t\delta,
\end{align*}
so in particular
\begin{equation}\label{mix:A:dypsi-lower}
  \sup_{y \in \bigcup_j [y_j^-, y_j^+]}\frac{1}{|\partial_y\psi(y,t)|}
  \;\le\; \frac{\pi}{2}\,\frac{1}{t\delta}.
\end{equation}
We now recall \eqref{mix:IG_Y_IBP} as
\begin{align*}
   \sum_j &\int_{y_j^-}^{y_j^+} e^{-ik \psi(y,t)}\phi(y)\,dy\\
   &= \sum_j \int_{y_j^-}^{y_j^+}
        \frac{\phi(y)}{-ik\,\partial_y \psi(y,t)}\,
        \partial_y\bigl(e^{-ik \psi(y,t)}\bigr)\,dy \\
   &= \frac{1}{ik} \sum_j \left( \int_{y_j^-}^{y_j^+}
        e^{-ik \psi(y,t)}\,
        \partial_y\Bigl(\frac{\phi(y)}{\partial_y \psi(y,t)}\Bigr)\,dy-\left[\frac{\phi(y)e^{-ik \psi(y,t)}}{\partial_y \psi(y,t)}\right]_{y_j^-}^{y_j^+} \right),
\end{align*}
therefore
\begin{align*}
 &\abs{\sum_j \int_{y_j^-}^{y_j^+} e^{-ik \psi}\phi(y)\,dy}\\
&\quad \le \frac{1}{|k|}\sum_j \int_{y_j^-}^{y_j^+}
     \Bigl|\partial_y\Bigl(\frac{\phi(y)}{\partial_y\psi(y,t)}\Bigr)\Bigr|\,dy
     + \frac{1}{|k|}\left|\sum_j \left[\frac{\phi(y) e^{-ik \psi(y,t)}}{\partial_y \psi(y,t)}\right]_{y_j^-}^{y_j^+}\right|\\
 &\quad = \frac{1}{|k|}\sum_j \int_{y_j^-}^{y_j^+}
     \biggl|\frac{\partial_y\phi(y)}{\partial_y\psi(y,t)}
           -\phi(y)\,\frac{\partial_y^2\psi(y,t)}{(\partial_y\psi(y,t))^2}\biggr|\,dy
     \\&\quad\quad+ \frac{1}{|k|}\left|\sum_j \left[\frac{\phi(y) e^{-ik \psi(y,t)}}{\partial_y \psi(y,t)}\right]_{y_j^-}^{y_j^+}\right|\\
&\quad \le \frac{1}{|k|}\sum_j \int_{y_j^-}^{y_j^+}
     \biggl|\frac{\partial_y\phi(y)}{\partial_y\psi(y,t)}\biggr|\,dy
           \\&\quad\quad+\frac{1}{|k|}\sum_j \int_{y_j^-}^{y_j^+}\biggl|\phi(y)\,\frac{\partial_y^2\psi(y,t)}{(\partial_y\psi)^2}\biggr|\,dy
     + \frac{1}{|k|}\left|\sum_j \left[\frac{\phi(y) e^{-ik \psi(y,t)}}{\partial_y \psi}\right]_{y_j^-}^{y_j^+}\right|\\
 &\quad  =: \frac{1}{|k|}(J_1+J_2+J_3).
\end{align*}
\emph{Estimate of $J_1$.}
Using \eqref{mix:A:dypsi-lower} and the one–dimensional estimate
$\int_{\T}|\partial_y\phi|dy\le C\|\phi\|_{H^1(\T)}$, we obtain
\begin{align*}
  J_1=&\sum_j \int_{y_j^-}^{y_j^+}
     \biggl|\frac{\partial_y\phi(y)}{\partial_y\psi(y,t)}\biggr|\,dy
  \le \sup_{y \in \bigcup_j [y_j^-, y_j^+]}\frac{1}{|\partial_y\psi(y,t)|}
      \sum_j \int_{y_j^-}^{y_j^+}|\partial_y\phi(y)|dy
  \\\le& \frac{\pi}{2}\,\frac{1}{t\delta}
      \int_{\T}|\partial_y\phi(y)|dy
  \le C\,\frac{1}{t\delta}\,\|\phi(y)\|_{H^1(\T)},
\end{align*}
with an absolute constant $C>0$. Since $\|\eta\|_{H^1_y}=1$ 
we have
\[
  J_1
  \le C\,\frac{1}{t\delta}\,\|\widehat{\Theta}_0(k,\cdot)\|_{H^1_y}.
\]
\emph{Estimate of $J_2$.}
Note that
\[
  J_2=\sum_j \int_{y_j^-}^{y_j^+}\biggl|\phi(y)\,\frac{\partial_y^2\psi(y,t)}{(\partial_y\psi(y,t))^2}\biggr|\,dy
  \le \|\phi(y)\|_{L^\infty}
      \sum_j \int_{y_j^-}^{y_j^+}\frac{|\partial_y^2\psi(y,t)|}{|\partial_y\psi(y,t)|^2}\,dy
      \]
Hence we need to estimate the integral on the right-hand side. From the explicit formulas
\[
  \partial_y\psi(y,t)
  = \frac{2}{c}\sin\Bigl(\frac{ct}{2}\Bigr)
                     \cos\Bigl(\frac{ct}{2}-y\Bigr),\qquad
  \partial_y^2\psi(y,t)
  = \frac{2}{c}\sin\Bigl(\frac{ct}{2}\Bigr)
                      \sin\Bigl(\frac{ct}{2}-y\Bigr),
\]
we compute
\[
  \frac{|\partial_y^2\psi(y,t)|}{|\partial_y\psi(y,t)|^2}
  \leq \frac{c}{2\bigl|\sin(\frac{ct}{2})\bigr|}
    \frac{1}
         {\cos^2\Bigl(\frac{ct}{2}-y\Bigr)}.
\]
Using again the fact that $\sin(\frac{ct}{2})\ge \frac{c}{\pi}t$, we bound the integrals:
\begin{align*}
   \sum_j \int_{y_j^-}^{y_j^+}\frac{|\partial_y^2\psi(y,t)|}{|\partial_y\psi(y,t)|^2}\,dy
   &\le \frac{\pi}{2t}\sum_j \int_{y_j^-}^{y_j^+} \cos^{-2}\Bigl(\frac{ct}{2}-y\Bigr)\,dy
   \\&=  \frac{\pi}{2t}\sum_j \Bigl[-\tan\Bigl(\frac{ct}{2}-y\Bigr)\Bigr]_{y_j^-}^{y_j^+}.
\end{align*}
By definition, the boundaries $y_j^\pm$ satisfy $\bigl|\cos(\frac{ct}{2}-y_j^\pm)\bigr| = \delta$. Consequently, at any endpoint, the primitive is bounded by
\[
  \Bigl|\tan\Bigl(\frac{ct}{2}-y_j^\pm\Bigr)\Bigr| = \frac{\bigl|\sin(\frac{ct}{2}-y_j^\pm)\bigr|}{\bigl|\cos(\frac{ct}{2}-y_j^\pm)\bigr|} \le \frac{1}{\delta}.
\]
We conclude
\[
  \sum_j \int_{y_j^-}^{y_j^+}\frac{|\partial_y^2\psi(y,t)|}{|\partial_y\psi(y,t)|^2}\,dy
  \le \frac{C}{t\delta}.
\]
Then using $H^1(\T)\hookrightarrow L^\infty(\T)$, we get
\[
  J_2
  \le \|\phi\|_{L^\infty}
      \sum_j \int_{y_j^-}^{y_j^+}\frac{|\partial_y^2\psi(y,t)|}{|\partial_y\psi(y,t)|^2}\,dy
  \le C\,\frac{1}{t\delta}\,\|\widehat{\Theta}_0(k,\cdot)\|_{H^1_y}.
\]

\emph{Estimate of $J_3$.}
We simply evaluate the boundary terms explicitly:
\[
  J_3=\left|\sum_j \left[\frac{\phi(y)e^{-ik \psi(y,t)}}{\partial_y \psi(y,t)}\right]_{y_j^-}^{y_j^+}\right|
  \;\le\; \sum_{y\in \{y_j^\pm\}} \left|\frac{e^{-ik \psi(y,t)}\phi(y)}{\partial_y \psi(y,t)}\right|
  \;\le\; \sum_{y\in \{y_j^\pm\}} \frac{|\phi(y)|}{|\partial_y \psi(y,t)|}.
\]
Using the bound \eqref{mix:A:dypsi-lower} on the denominator and a $L^\infty$-bound on the numerator:
\[
  J_3
  \le  \frac{C}{t\delta} \|\phi\|_{L^\infty}.
\]
Using the embedding $H^1(\T)\hookrightarrow L^\infty(\T)$, we conclude
\[
  J_3 \le C\,\frac{1}{t\delta}\,\|\widehat{\Theta}_0(k,\cdot)\|_{H^1_y}.
\]

Combining the bounds for $J_1$, $J_2$ and $J_3$ gives
\begin{align}\label{mix:A:final}
  \abs{\sum_j \int_{y_j^-}^{y_j^+} e^{-ik \psi(y,t)}\phi(y)\,dy}
  \le \frac{1}{|k|}(J_1+J_2+J_3)
  \le \frac{C}{|k|t\delta}\,\|\widehat{\Theta}_0(k,\cdot)\|_{H^1_y},
\end{align}
for all $k\in\mathbb Z\setminus\{0\}$ and all $t\in(0,T]$ with
$T\le \frac{\pi}{c}$. The constant $C>0$ is absolute and does not depend
on $t$, $c$, $k$ or $T$, which is precisely \eqref{mix:Gy_est_pre}.
\subsection{Hypocoercivity}
This appendix collects technical details and auxiliary lemmas for Section \ref{sec:hyp}. The notation follows the conventions of that section.
\subsubsection*{Auxiliary Lemmas}
\begin{lemma}
	\label{hyp:L:energy}
	Let $\theta(k,y,\tau)$ be a sufficiently smooth solution of \eqref{hyp:eq}. Then, the following identities hold:
\begin{enumerate}
		\item $\displaystyle \frac{1}{2}\dv{}{\tau}E_0 = -\mu E_1,$
		\item $\displaystyle\frac{1}{2}\dv{}{\tau}E_1 = -\mu E_2 - E_3,$
		\item $\displaystyle\dv{}{\tau}E_3 = -E_4 - 2\mu\Re\inner{i\partial_y v\partial_y \theta}{\partial_y^2 \theta} - \mu\Re\inner{i\partial_y^2 v\theta }{\partial_y^2 \theta}-\varsigma\Re \inner{i\partial_{y}^2 v \theta}{\partial_{y} \theta},$
		\item $\displaystyle\frac{1}{2}\dv{}{\tau}E_4 = -\mu \norm{\partial_y v\partial_y \theta}_2^2 - 2\mu \Re \inner{\partial_y v\partial_y^2 v \theta}{\partial_y \theta}+\varsigma E_6,$
        \item $\displaystyle\dv{}{\tau} E_6= -\varsigma E_4+\varsigma E_7 -4 \mu E_6-2 \mu \inner{v \partial_y \theta}{\partial_y v \partial_y \theta},  $
        \item $\displaystyle \frac{1}{2}\dv{}{\tau}E_7=-\varsigma E_6-\mu \norm{v \partial_y \theta}_2^{2}+\mu E_4 -\mu E_7$.
	\end{enumerate}
\end{lemma}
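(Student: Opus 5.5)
The plan is to verify each identity by direct computation: differentiate the quantity in $\tau$, substitute \eqref{hyp:eq} in the form
\[
\partial_\tau\theta=\varsigma\partial_y\theta-iv\theta+\mu\partial_y^2\theta ,
\]
and integrate by parts on $\T$, where there are no boundary terms. Smoothness of $\theta$ is assumed in the statement. For the later application with $L^2$ data, parabolic smoothing for $\tau>0$ plus a density argument would justify this.

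Three structural facts do most of the work. Here $w$ denotes a real, smooth weight, which will be $1$, $v'$ or $v$.
\begin{itemize}
\item The potential term is skew: $\Re\langle -iv\theta\, w, w\theta\rangle=0$.
\item The transport term contributes $2\Re\langle w\varsigma\partial_y\theta,w\theta\rangle=\varsigma\int w^2\partial_y|\theta|^2=-2\varsigma\int ww'|\theta|^2$.
\item The diffusion contributes $2\mu\Re\langle w\partial_y^2\theta,w\theta\rangle=-2\mu\|w\partial_y\theta\|_2^2-2\mu\int ww'\partial_y|\theta|^2$. The last integral is $+2\mu\int (ww')'|\theta|^2$.
\end{itemize}
Throughout I use $v=\sin y$, so $v''=-v$ and $(v')^2-v^2=\cos 2y$.

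With $w=1$ this gives item 1 immediately. For item 2, differentiate \eqref{hyp:eq} in $y$, so that $\partial_y\theta$ satisfies the same equation plus the commutator source $-iv'\theta$. Repeating the computation with $w=1$ gives $-\mu E_2$ from diffusion, $0$ from transport and the potential, and $-\Re\langle iv'\theta,\partial_y\theta\rangle=-E_3$ from the commutator.

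For item 4 take $w=v'$. Transport gives $-2\varsigma\int v'v''|\theta|^2=2\varsigma E_6$. Diffusion gives $-2\mu\|v'\partial_y\theta\|_2^2-4\mu\Re\langle v'v''\theta,\partial_y\theta\rangle$. For item 7 take $w=v$. Transport gives $-2\varsigma E_6$. Diffusion gives $-2\mu\|v\partial_y\theta\|^2+2\mu\int((v')^2-v^2)|\theta|^2=-2\mu\|v\partial_y\theta\|^2+2\mu(E_4-E_7)$. Halving each of these yields the stated identities.

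Item 6 concerns $E_6=\int vv'|\theta|^2$, and the same calculus applies with $w^2$ replaced by $vv'$.
\begin{itemize}
\item Transport: $-\varsigma\int (vv')'|\theta|^2=-\varsigma\int((v')^2-v^2)|\theta|^2=-\varsigma E_4+\varsigma E_7$.
\item Diffusion: $-2\mu\int vv'|\partial_y\theta|^2+\mu\int (vv')''|\theta|^2$. Since $(vv')''=(\cos 2y/1)'\cdot\tfrac12\cdot 2$ gives $(\tfrac12\sin 2y)''=-2\sin 2y=-4vv'$, this equals $-2\mu\langle v\partial_y\theta,v'\partial_y\theta\rangle-4\mu E_6$.
\end{itemize}

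The step I expect to be most error-prone is item 3, $E_3=\Re\langle iv'\theta,\partial_y\theta\rangle$, because it mixes both factors. I will compute
\[
\tfrac{d}{d\tau}E_3=\Re\langle iv'\partial_\tau\theta,\partial_y\theta\rangle+\Re\langle iv'\theta,\partial_y\partial_\tau\theta\rangle ,
\]
integrate the second term by parts onto the first factor, and then collect contributions by type.
\begin{itemize}
\item The potential part should produce exactly $-\|v'\theta\|^2=-E_4$, since the term $\Re\langle iv'\theta, -iv'\theta\rangle$ survives while the $v\,\partial_y$ cross terms cancel by skewness.
\item The transport part should collapse, after integrating by parts, to $\varsigma\Re\langle -iv''\theta,\partial_y\theta\rangle$, which is the stated $-\varsigma\Re\langle i\partial_y^2v\,\theta,\partial_y\theta\rangle$.
\item The diffusion part should reduce, after moving one derivative, to $-2\mu\Re\langle iv'\partial_y\theta,\partial_y^2\theta\rangle-\mu\Re\langle iv''\theta,\partial_y^2\theta\rangle$.
\end{itemize}
I will track signs carefully under conjugate-linearity in the second slot and use $\Re\langle i f,f\rangle=0$ to discard purely imaginary self-pairings.
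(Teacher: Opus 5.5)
Your proposal is correct and is essentially the paper's proof. Both substitute $\partial_\tau\theta=\varsigma\partial_y\theta-iv\theta+\mu\partial_y^2\theta$, integrate by parts on $\T$, discard the skew pairings, and use $v''=-v$ (so that $(vv')'=(v')^2-v^2$ and $(vv')''=-4vv'$); your weighted identity for $\frac{d}{d\tau}\int m|\theta|^2$ just packages the paper's separate computations for $E_0,E_4,E_6,E_7$, and all six formulas you would obtain check out.

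One small inconsistency is in item 3. Your account of the potential part (the term $\Re\langle iv'\theta,-iv'\theta\rangle$ survives and the $v\partial_y$ cross terms cancel) is the bookkeeping of the paper's route, which uses the differentiated equation for $\partial_\tau\partial_y\theta$. If you instead integrate the second term by parts first, as you announce, the cross terms add to $2\Re\langle vv'\theta,\partial_y\theta\rangle=-E_4+E_7$, and the $E_7$ is cancelled by $-\Re\langle iv''\theta,-iv\theta\rangle=-E_7$. The result is the same $-E_4$.
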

\begin{proof}
Since the first four identities follow from standard energy estimates (see, e.g., \cite{Bedrossian2017,cotizelatigallay}), we only sketch their derivation. The last two balances involve the non-standard terms of the functional and rely more heavily on specific properties of the flow~$v$, so we provide additional details.

	In this proof we will repeatedly use the antisymmetric property of the advection terms under the $L^2 (\mathbb{T})$ inner product as
	\begin{equation}
\label{hyp:anitsymmetricprop}
\begin{aligned}
  \Re \inner{iv(y) f}{f}&=0, \quad\forall f\in L^2 (\mathbb{T}), \\
  \Re \inner{\partial_y f}{f}&=0, \quad\forall f\in L^2 (\mathbb{T}).
\end{aligned}
\end{equation}
    Further note that for our specific flow we have the property
    \begin{align}
    \label{hyp:A:sin_prop}
        \partial_y^2 v=-v.
    \end{align}
	We also note that boundary terms vanish due to periodicity of the domain. The first identity can be obtained by testing equation \eqref{hyp:eq} with $\theta$ and integrating over $\mathbb{T}$, we have
	\begin{align*}
		\frac{1}{2} \frac{d}{d t} E_0 &= \Re \langle \partial_\tau \theta, \theta \rangle_y = \Re \langle \mu \partial_y^2 \theta - i v \theta+ \varsigma\partial_y \theta, \theta \rangle_y = - \mu E_1,
	\end{align*}
	where we have used \eqref{hyp:anitsymmetricprop}. The second identity follows similarly by testing equation \eqref{hyp:eq} with $-\partial_y^2\theta$, which gives
	\begin{align*}
		\frac{1}{2} \frac{d}{d t} E_1 &= \Re \langle \partial_\tau \partial_y \theta, \partial_y \theta \rangle_y = \Re \big\langle \big[ \mu \partial_y^3 \theta - i  v \partial_y \theta - i  \partial_y v \theta +\varsigma \partial_y^2 \theta\big], \partial_y \theta \big\rangle_y \\
		&= - \mu E_2 -  E_3. 
	\end{align*}
	For the third identity we obtain
	\begin{align*}
		\dv{}{\tau}E_3 =& \Re\langle i\partial_y v \partial_\tau \theta, \partial_y \theta\rangle_y + \Re\langle i\partial_y v\theta, \partial_y \partial_\tau \theta\rangle_y \\
		=&\Re [\inner{v\partial_y v \theta}{\partial_y \theta}-\inner{\partial_y v \theta}{\partial_y v \theta} -\inner{\partial_y v \theta}{v \partial_y \theta}]\\
		&+\mu \Re [\inner{i \partial_y v \partial_y^2\theta}{\partial_y \theta}+ \inner{i  \partial_y v \theta}{\partial_y^3 \theta} ]+\Re \varsigma[\inner{i\partial_y v \partial_y \theta}{\partial_y \theta}+\inner{i\partial_y v \theta}{\partial_y^2 \theta}] \\
		=&  -E_4 - \mu \Re \langle i  \partial_y^2 v \theta, \partial_y^2 \theta \rangle_y - 2 \mu \Re \langle i  \partial_y v \partial_y \theta, \partial_y^2 \theta \rangle_y -\varsigma\Re\inner{i\partial_{y}^2 v \theta}{\partial_y  \theta}.
	\end{align*}
	The fourth identity is then obtained as follows
\begin{align*}
    \frac{1}{2}\dv{}{\tau}E_4 
    &= \Re \langle \partial_y v \partial_\tau \theta , \partial_y v \theta \rangle_y 
    = \Re \big\langle \partial_y v \big[ \mu \partial_y^2 \theta - i  v \theta +\varsigma\partial_y \theta \big], \partial_y v \theta \big\rangle_y \\
    &= -\mu \norm{\partial_y v\,\partial_y \theta}_2^{2}
    - 2\mu \Re \inner{\partial_y v\,\partial_y^2 v \,\theta}{\partial_y \theta}
    + \varsigma \Re \inner{\partial_y \theta}{(\partial_y v)^2 \theta},
\end{align*}
where we have integrated the $\mu$-term by parts and used property \eqref{hyp:anitsymmetricprop} for the $iv\theta$-term. For the $\varsigma$-term, we integrate by parts to find
\begin{equation*}
    \varsigma \Re \inner{\partial_y \theta}{(\partial_y v)^2 \theta} 
    = -\frac{\varsigma}{2}\Re \inner{\theta}{\partial_y\big((\partial_y v)^2\big)\theta}
    = -\varsigma \Re \inner{\theta}{(\partial_y v)(\partial_y^2 v)\theta}.
\end{equation*}
Finally, using property \eqref{hyp:A:sin_prop} then yields
\begin{align*}
    \frac{1}{2}\dv{}{\tau}E_4 
    &=-\mu \norm{\partial_y v\,\partial_y \theta}^2_2
    -2\mu \Re \inner{\partial_y v\,\partial_y^2 v \,\theta}{\partial_y \theta}
    +\varsigma E_6.
\end{align*}
    For the fifth identity, we use \eqref{hyp:eq} to compute
    \begin{align*}
        \dv{}{\tau}E_6
        &= \Re\inner{v\partial_y v\partial_\tau\theta}{\theta} +\Re\inner{v\partial_y v\theta}{\partial_\tau\theta}= 2\Re\inner{v\partial_y v\partial_\tau\theta}{\theta} \\
        &= 2\Re\inner{v\partial_y v\big(\mu\partial_y^2\theta - i v\theta + \varsigma\partial_y\theta\big)}{\theta} \\
        &= 2\mu\Re\inner{v\partial_y v\partial_y^2\theta}{\theta}
           + 2\varsigma\Re\inner{v\partial_y v\partial_y\theta}{\theta},
    \end{align*}
    where the $iv\theta$-term vanishes by \eqref{hyp:anitsymmetricprop}. 
    Integrating by parts one gets for the $\mu$-term
   \begin{align*}
    2\mu\,\Re\inner{v\partial_y v\,\partial_y^2\theta}{\theta}
    = -2\mu\,\inner{v\partial_y v\,\partial_y\theta}{\partial_y\theta}
       + \mu\,\inner{(v\partial_y v)''\theta}{\theta}.
       \end{align*}
       Now noting that by \eqref{hyp:A:sin_prop} we have
       \[
\partial_y^2(v\partial_y v) = \partial_y\big((\partial_y v)^2 + v\,\partial_y^2 v\big)
= \partial_y\big((\partial_y v)^2 - v^2\big)
= 2\,\partial_y v\,\partial_y^2 v - 2v\,\partial_y v
= -4\,v\,\partial_y v,
\] we get
\begin{align*}
    2\mu\,\Re\inner{v\partial_y v\,\partial_y^2\theta}{\theta}
    &= -2\mu\,\inner{v\partial_y v\,\partial_y\theta}{\partial_y\theta}
       + \mu\,\inner{(v\partial_y v)''\theta}{\theta} \\
    &= -2\mu\,\Re\inner{v\partial_y\theta}{\partial_y v\,\partial_y\theta}
       -4\mu E_6.
\end{align*}
For the $\varsigma$-term we integrate by parts yet again and obtain
        \[
        2\varsigma\,\Re\inner{v\,\partial_y v\,\partial_y\theta}{\theta}
        = -\varsigma\,\inner{\partial_y(v\,\partial_y v)\theta}{\theta}
        = -\varsigma\,\inner{((\partial_y v)^2 - v^2)\theta}{\theta}
        = -\varsigma E_4 + \varsigma E_7.
    \]
    Hence
    \[
        \dv{}{\tau}E_6
        = -\varsigma E_4 + \varsigma E_7
          -4\mu E_6
          -2\mu\,\Re\inner{v\partial_y\theta}{\partial_y v\,\partial_y\theta}.
    \]
For the sixth identity, we use \eqref{hyp:eq}
to compute
    \begin{align*}
        \frac{1}{2}\dv{}{\tau}E_7
        &= \Re\inner{v\partial_\tau\theta}{v\theta} \\
        &= \Re\inner{v\big(\mu\partial_y^2\theta - i v\theta + \varsigma\partial_y\theta\big)}{v\theta} \\
        &= \mu\,\Re\inner{v^2\partial_y^2\theta}{\theta}
           + \varsigma\Re\inner{v^2\partial_y\theta}{\theta},
    \end{align*}
    again, using \eqref{hyp:anitsymmetricprop} to discard the $iv\theta$–term. 
    Integrating by parts we obtain for the $\mu$-term
  \begin{align*}
    \mu\,\Re\inner{v^2\partial_y^2\theta}{\theta}
    &= -\mu\,\inner{v^2\partial_y\theta}{\partial_y\theta}
       - \mu\,\Re\inner{\partial_y(v^2)\partial_y\theta}{\theta}.
\end{align*}
Using an additional integration by parts on the last term, we obtain
\[
    \Re\inner{\partial_y(v^2)\partial_y\theta}{\theta}
    = -\frac12\,\inner{\partial_y^2(v^2)\theta}{\theta}.
\]
Hence, noting that via \eqref{hyp:A:sin_prop}
    \[
\partial_y^2(v^2) = \partial_y\big(2v\,\partial_y v\big)
       = 2\big[(\partial_y v)^2 + v\,\partial_y^2 v\big]
       = 2\big((\partial_y v)^2 - v^2\big),
\]
we get
\begin{align*}
    \mu\,\Re\inner{v^2\partial_y^2\theta}{\theta}
    &= -\mu\,\inner{v^2\partial_y\theta}{\partial_y\theta}
       + \frac{\mu}{2}\,\inner{(v^2)''\theta}{\theta} \\
    &= -\mu\|v\partial_y\theta\|_2^{2}
       + \mu\,\inner{\big((\partial_y v)^2 - v^2\big)\theta}{\theta} \\
    &= -\mu\|v\partial_y\theta\|_2^{2} + \mu E_4 - \mu E_7.
\end{align*}
An integration by parts of the $\varsigma$-term gives
        \[
        \varsigma\,\Re\inner{v^2\partial_y\theta}{\theta}
        = -\frac{c}{2}\,\inner{\partial_y(v^2)\theta}{\theta}
        = -\varsigma\,\inner{v\,\partial_y v\,\theta}{\theta}
        = -\varsigma E_6.
    \]

    Therefore
    \[
        \frac{1}{2}\dv{}{\tau}E_7
        = -\varsigma E_6 -\mu\|v\partial_y\theta\|_2^{2}
          +\mu E_4 -\mu E_7.
    \]

\end{proof}
\begin{lemma}\cite[Proposition 2.7]{Bedrossian2017}
	\label{hyp:L:spectral_gap}
	There exists a constant $C_{s} \geq 1$ such that for all $\sigma \in (0,1]$,
	\begin{equation}
		\sigma^{\tfrac{1}{2}} E_0 \lesssim C_{s} [\sigma E_1 + E_4]\,.
	\end{equation}
\end{lemma}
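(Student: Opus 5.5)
The inequality is a weighted Poincaré-type bound that rests only on the fact that $\partial_y v=\cos y$ vanishes at finitely many points ($y=\pm\pi/2$), and only to first order there. I would prove it by splitting $\T$ into a neighbourhood of these zeros and its complement. The width of the neighbourhood is chosen to balance $E_4$ against $E_1$, with $\delta:=\sigma^{1/4}\in(0,1]$. The key quantitative input is the elementary bound
\[
|\cos y|\ge \tfrac{2}{\pi}\operatorname{dist}(y,\{\pm\pi/2\}),
\]
which expresses the simplicity of the critical points of $v=\sin y$.

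\emph{Step 1 (away from the critical points).} Let $A:=\{y:\operatorname{dist}(y,\{\pm\pi/2\})\ge\delta\}$. On $A$ we have $|\cos y|\ge 2\delta/\pi$, so
\[
\int_A|\theta|^2\,dy\le \frac{\pi^2}{4\delta^2}\int_A|\cos y\,\theta|^2\,dy\le C\delta^{-2}E_4 .
\]

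\emph{Step 2 (near the critical points).} Let $I:=(\pi/2-\delta,\pi/2+\delta)$; the other zero is treated in the same way. Let $J:=[\pi/2+\delta,\pi/2+2\delta]$ be an adjacent interval, which lies in $A$ because $\delta\le1$. For $y\in I$ and $y'\in J$ the fundamental theorem of calculus and Cauchy--Schwarz give
\[
|\theta(y)|^2\le 2|\theta(y')|^2+2\Big(\int_I\cup J|\partial_y\theta|\Big)^2\le 2|\theta(y')|^2+6\delta E_1 .
\]
Here $\theta(k,\cdot,\tau)\in H^1$ for $\tau>0$, which suffices since the lemma is applied pointwise in time. Averaging over $y'\in J$ and then integrating over $y\in I$ yields
\[
\int_I|\theta|^2\le 4\int_J|\theta|^2+12\delta^2E_1\le C\delta^{-2}E_4+C\delta^2E_1 ,
\]
where the bound on $\int_J|\theta|^2$ comes from Step 1.

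\emph{Step 3 (conclusion).} Adding the two regions gives $E_0\le C(\delta^{-2}E_4+\delta^2E_1)$. With $\delta^2=\sigma^{1/2}$ this reads
\[
E_0\le C(\sigma^{-1/2}E_4+\sigma^{1/2}E_1),
\]
and multiplying by $\sigma^{1/2}$ gives $\sigma^{1/2}E_0\le C_s(\sigma E_1+E_4)$. Enlarging the constant if necessary ensures $C_s\ge1$, and $C_s$ is independent of $k,\mu,\tau$ because it depends only on the profile $\cos y$.

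The only delicate point is Step 2. The comparison interval $J$ must have length comparable to $\delta$ and lie where $|\cos y|\gtrsim\delta$, so that averaging costs only $\delta^{-1}$ in the $L^2$ mass; the linear vanishing of $\cos y$ is exactly what makes $\delta\sim\sigma^{1/4}$ the balancing scale. This choice produces the exponent $1/2$; a degenerate critical point would lead to a different exponent.
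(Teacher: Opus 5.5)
Your proof is correct, and it differs from what the paper does. The paper gives no argument of its own and simply imports Proposition~2.7 of Bedrossian--Coti Zelati. That result is stated for general profiles with finitely many critical points of maximal order $n_0$, with weight $\sigma^{n_0/(n_0+1)}$, which is $\sigma^{1/2}$ here because the zeros of $\cos y$ are simple. The standard proof of that proposition controls the mass near the critical points globally rather than locally:
\begin{itemize}
\item it bounds that mass by the measure of the neighbourhood times $\|\theta\|_{L^\infty}^2$;
\item it then uses the one-dimensional interpolation $\|\theta\|_{L^\infty}^2\lesssim\|\theta\|_2^2+\|\theta\|_2\|\partial_y\theta\|_2$;
\item it finishes with an absorption step and Young's inequality.
\end{itemize}
You replace this with a purely local comparison: the fundamental theorem of calculus plus averaging over an adjacent interval $J$ on which $|\cos y|\gtrsim\delta$. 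This needs no absorption and gives explicit absolute constants, for instance $E_0\le 3\pi^2\delta^{-2}E_4+24\delta^2E_1$. The citation buys generality. Your argument buys a self-contained proof of the clean inequality $\sigma^{1/2}E_0\le C_s(\sigma E_1+E_4)$, with no extra implicit constant hidden in the $\lesssim$ of the stated lemma. That is exactly the form the paper relies on when it later fixes $\beta_0\le(32C_s)^{-4}$. Your argument also extends directly to critical points of order $n_0$ by taking $\delta=\sigma^{1/(2n_0+2)}$, so in substance it loses no generality. One typo: $\int_I\cup J$ should read $\int_{I\cup J}$.
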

\subsubsection*{Detailed optimisation}
\label{hyp:A:optimisation}
We recall that in order to apply coercivity \eqref{hyp:coercivity}, we need an inequality of the form
\begin{align}
\label{hyp:ineq_needed}
    \dv{}{\tau}\Phi+\frac{1}{8}\lambda_\mu\qty(4E_0+5\alpha_0  E_1+6\gamma_0, E_4+5\gamma_1 E_7)\leq 0
\end{align}
where $\lambda_\mu$ is the rate that will be determined below. Form \eqref{hyp:param_choice} we have
\begin{align*}
\alpha_0=\beta_0^{1/2} \mu^{p},\quad\gamma_0=16\frac{\beta_0^{3/2}}{\mu^{p}},\quad \gamma_1=\frac{\beta_0^{1/2}}{\mu^{q}}.
\end{align*}
We perform the optimisation procedure in two steps, we first address the critical scaling in $\mu$ and then tune the scaling in $\beta_0$ and the constants. 

\paragraph{$\mu$-Optimisation:}
arTo streamline the presentation, we temporarily suppress all constants (including $\beta_0$) and extract from
\eqref{hyp:post_spectral} an optimisation problem in~$\mu$.
Ignoring prefactors, \eqref{hyp:post_spectral} has the form
\begin{align}
\label{hyp:post_spectral_mu_free}
    \dv{}{\tau}\Phi
    + \mu^{\frac{1-q}{2}} E_0
    + \mu^{1-q} E_1
    + E_4
    + \mu^{\frac{2\ell-p-q}{2}} E_7
    \le 0 .
\end{align}
To recover the $\mu$--weights appearing in \eqref{hyp:ineq_needed} via \eqref{hyp:param_choice},
we aim to factor out a single rate $\lambda_\mu \sim \mu^{\frac{1-q}{2}}$ and bound
\begin{align*}
    \mu^{\frac{1-q}{2}}
    \Bigl(E_0+\mu^{p}E_1+\mu^{-p}E_4+\mu^{-q}E_7\Bigr)
    \;\le\;
    \mu^{\frac{1-q}{2}}E_0+\mu^{1-q}E_1+E_4+\mu^{\frac{2\ell-p-q}{2}}E_7 .
\end{align*}
Termwise, this requires
\begin{align*}
    \mu^{\frac{1-q}{2}+p}\le \mu^{1-q},\qquad
    \mu^{\frac{1-q}{2}-p}\le 1,\qquad
    \mu^{\frac{1-q}{2}-q}\le \mu^{\frac{2\ell-p-q}{2}} .
\end{align*}
In addition, we must enforce the $\mu$--constraints introduced during the proof,
namely \eqref{hyp:mu:E_6_condition}, \eqref{hyp:mu:E_1_condition}, and \eqref{hyp:mu:E_0_condition}:
\begin{align*}
    \mu^{2(\ell-p)}\le \mu^{\frac{2\ell-p-q}{2}},\qquad
    \mu^{\frac{2\ell+p+q}{2}}\le \mu^{1-q},\qquad
    \mu^{\frac{1-q}{2}}\ge \mu^{1-p}.
\end{align*}
Since $0<\mu<1$, we use $\mu^{A}\le \mu^{B}\iff A\ge B$ and $\mu^{A}\ge \mu^{B}\iff A\le B$
to rewrite these as constraints on exponents:
\[
\begin{alignedat}{2}
    \text{(A)}\;& 2\ell-p+2q\le 1,
    \qquad&
    \text{(D)}\;& 2\ell+q\ge 3p,\\[2pt]
    \text{(B)}\;& p\le \frac{1-q}{2},
    \qquad&
    \text{(E)}\;& 2\ell+p+3q\ge 2,\\[2pt]
    \text{(C)}\;& \frac{1-q}{2}\le p,
    \qquad&
    \text{(F)}\;& 2p\le 1+q.
\end{alignedat}
\]
Moreover, we retain the order assumptions used in the proof (cf.\ \eqref{hyp:p_q_fix} and \eqref{hyp:pq_y_fix}),
\[
\text{(O)}\qquad 0<q<p\le \ell<1 .
\]

\medskip
\noindent\emph{Step 1: Fixing $p$.}
From the two-sided bound (B)–(C) we obtain
\begin{equation}
\label{hyp:mu_opt_p}
    p=\frac{1-q}{2}.
\end{equation}

\noindent\emph{Step 2: Fixing $\ell$.}
Insert \eqref{hyp:mu_opt_p} into (A) and (D):
\begin{align*}
\text{(A)}\;&\ 2\ell-\frac{1-q}{2}+2q\le 1
\ \Longleftrightarrow\ 4\ell+5q\le 3,\\[0.3em]
\text{(D)}\;&\ 2\ell+q\ge 3\frac{1-q}{2}
\ \Longleftrightarrow\ 4\ell+5q\ge 3.
\end{align*}
Hence $4\ell+5q=3$, i.e.
\begin{equation}
\label{hyp:mu_opt_ell}
    \ell=\frac{3-5q}{4}.
\end{equation}

\noindent\emph{Step 3: Admissible $q$--range.}
Using (O) with \eqref{hyp:mu_opt_p} gives
\[
q<p=\frac{1-q}{2}
\ \Longleftrightarrow\
3q<1
\ \Longleftrightarrow\
q\in\Bigl(0,\frac13\Bigr).
\]
For such $q$, \eqref{hyp:mu_opt_ell} yields $\ell\in(1/3,3/4)$ and also $p\le \ell$.
Finally, with \eqref{hyp:mu_opt_p}--\eqref{hyp:mu_opt_ell}, the remaining constraints (E) and (F)
reduce to identities/redundancies:
\[
2\ell+p+3q=2,
\qquad
2p\le 1+q \ \Longleftrightarrow\ q\ge 0.
\]
Therefore the feasible set is the segment
\[
q\in\Bigl(0,\frac13\Bigr),\qquad
p=\frac{1-q}{2},\qquad
\ell=\frac{3-5q}{4}.
\]
Eliminating $q$ gives the parametrisation in terms of~$\ell$:
\begin{equation}
\label{hyp:y_final_range}
\ell\in\Bigl(\tfrac13,\tfrac34\Bigr),\qquad
p=\frac{1+2\ell}{5},\qquad
q=\frac{3-4\ell}{5}.
\end{equation}

With \eqref{hyp:y_final_range}, we have $\frac{1-q}{2}=p$, so \eqref{hyp:post_spectral} can be rewritten as
\begin{align}
\label{hyp:post_mu}
    \dv{}{\tau}\Phi
    + \mu^p\Bigl[
      \frac{\beta_0^{3/2}}{2C_{s}}\,E_0
      + \mu^{p}\beta_0^{3/2}\,E_1
      + \mu^{-p}\frac{\beta_0}{16}\,E_4
      + \frac14\,\mu^{-q}\beta_0\varsigma_k\,E_7
    \Bigr]
    \le 0 .
\end{align}

\paragraph{$\beta_0$-Optimisation:}
Optimising in $\beta_0$ and the constants is now straight forward. We first consider only the bracket of \eqref{hyp:post_mu}. We need to move the factor of $\beta_0^{5/4}$, in front of $E_0$, out of the bracket.
\begin{align*}
    &\mu^{p}\qty[ \frac{\beta_0^{5/4}}{2C_{s}}E_0 + \mu^p \beta_0^{3/2}E_1+ \mu^{-p}\frac{\beta_0 }{16} E_4 + \frac{1}{4}\mu^{-q}\beta_0\varsigma_k E_7]\\
    \geq&\beta_0^{5/4}\mu^{p}\qty[ \frac{1}{2C_{s}}E_0 + \beta_0^{1/4}\mu^p E_1+ \frac{1 }{16\beta_0^{1/4}} \mu^{-p}E_4 + \frac{1}{4\beta_0^{1/4} }\mu^{-q}\varsigma_k E_7].
 \end{align*}
Now recall that $\beta_0$ is small (see \eqref{hyp:beta_0_condition_spectral}) and that $C_{s}\geq 1$, hence we have
\begin{align*}
    &\beta_0^{5/4}\mu^{p}\qty[ \frac{1}{2C_{s}}E_0 + \beta_0^{1/4}\mu^p E_1+ \frac{1 }{16\beta_0^{1/4}} \mu^{-p}E_4 + \frac{1}{4\beta_0^{1/4} }\mu^{-q}\varsigma_k E_7]\\
        \geq{}& \frac{1}{2C_{s}}\beta_0^{5/4}\mu^{p}\qty[ E_0 + \beta_0^{1/2}\mu^p E_1+ 16\beta_0^{3/2} \mu^{-p}E_4 +\beta_0^{1/2}\mu^{-q}\varsigma_k E_7]\\
          ={}&\frac{1}{2C_{s}}\beta_0^{5/4}\mu^{p}\qty[ E_0 + \alpha_0 E_1+ \gamma_0 E_4 +\varsigma_k\gamma_1 E_7]\\
    \geq{}&\frac{1}{2C_{s}}\beta_0^{5/4}\varsigma_k\mu^{p}\qty[ E_0 + \alpha_0 E_1+ \gamma_0 E_4 +\gamma_1 E_7]\\
    \geq{}&\frac{1}{12C_{s}}\beta_0^{5/4}\varsigma_k\mu^{p}\qty[ 4E_0 + 5\alpha_0 E_1+ 6\gamma_0 E_4 +5\gamma_1 E_7],
\end{align*}
where in the penultimate step we used $\varsigma_k\leq \varsigma_1$ and absorbed a factor of $\max(\varsigma_1,1)$ into $C_s$. Therefore, as \eqref{hyp:post_mu} holds, we obtain
\begin{align}\label{hyp:post_opt}
    &\dv{}{\tau}\Phi+ 
    \frac{\beta_0^{5/4}}{12C_{s}}\varsigma_k\mu^{p}\qty[ 4E_0 + 5\alpha_0 E_1+ 6\gamma_0 E_4 +5\gamma_1 E_7]\leq 0.
 \end{align}

  \bibliographystyle{abbrv}
\bibliography{bibliography}

@article{cotizelati2024,
  title={Mixing in incompressible flows: transport, dissipation, and their interplay},
  author={Coti Zelati, M. and Crippa, G. and Iyer, G. and Mazzucato, A. L.},
  journal={Notices of the American Mathematical Society},
  volume={71},
  number={5},
  pages={593--604},
  year={2024},
  doi = {10.1090/noti2929}
}

@article{pierrehumbert1994tracer,
  title={Tracer microstructure in the large-eddy dominated regime},
  author={Pierrehumbert, R. T.},
  journal={Chaos, Solitons \& Fractals},
  volume={4},
  number={6},
  pages={1091--1110},
doi={10.1016/0960-0779(94)90139-2},
  year={1994},
  publisher={Elsevier}
}

@article{thiffeault2012,
  title={Using multiscale norms to quantify mixing and transport},
  author={Thiffeault, J.-L.},
  journal={Nonlinearity},
  volume={25},
  number={2},
  pages={R1},
doi= {10.1088/0951-7715/25/2/R1},
  year={2012},
  publisher={IOP Publishing}
}

@ARTICLE{Bedrossian2017,
    title={Enhanced Dissipation, Hypoellipticity, and Anomalous Small Noise Inviscid Limits in Shear Flows},year={2017},author={J. Bedrossian and M. {Coti Zelati}},doi={10.1007/s00205-017-1099-y},pmid={null},pmcid={null},mag_id={2963856888},journal={Archive for Rational Mechanics and Analysis},volume={224}
}

@article{Coble2024,
  author = {Coble, D. and He, S.},
  title = {A note on enhanced dissipation of time-dependent shear flows},
  journal = {Communications in Mathematical Sciences},
  volume = {22},
  number = {6},
  pages = {1685--1700},
  year = {2024},
  month = {7},
  doi = {10.4310/CMS.2024.v22.n6.a10},
  publisher = {International Press of Boston, Inc.},
}

@article{cotizelatigallay,
author = {Coti Zelati, M. and Gallay, T.},
title = {Enhanced dissipation and Taylor dispersion in higher-dimensional parallel shear flows},
journal = {Journal of the London Mathematical Society},
volume = {108},
number = {4},
pages = {1358-1392},
doi = {10.1112/jlms.12782},
year = {2023}
}

@article{cotizelati2019,
  title={On the relation between enhanced dissipation timescales and mixing rates},
  author={Coti Zelati, M. and Delgadino, M. G. and Elgindi, T. M.},
  journal={Communications on Pure and Applied Mathematics},
  volume={73},
  number={6},
  pages={1205--1244},
doi = {10.1002/cpa.21831},
  year={2020},
  publisher={Wiley Online Library}
}

@Book{villani,
  author    = {Villani, C.},
  publisher = {American Mathematical Society},
  title     = {Hypocoercivity},
  year      = {2009},
  number    = {950},
  series    = {Memoirs of the American Mathematical Society},
doi = {10.1090/S0065-9266-09-00567-5},
}

@ARTICLE{Wei2019,
  author    = {Wei, D. and Zhang, Z.},
  title     = {Enhanced dissipation for the Kolmogorov flow via the hypocoercivity method},
  journal   = {Science China Mathematics},
  volume    = {62},
  year      = {2019},
  pages     = {1219--1232},
  doi       = {10.1007/s11425-018-9508-5},
}

@book{Stein,
 ISBN = {9780691032160},
 author = {E. M. Stein},
 publisher = {Princeton University Press},
 title = {Harmonic Analysis (PMS-43): Real-Variable Methods, Orthogonality, and Oscillatory Integrals. (PMS-43)},
 urldate = {2024-10-30},
 year = {1993},
doi={10.1515/9781400883929},
}

@Article{Zelati2021,
  author   = {Coti Zelati, M. and Drivas, T. D.},
  journal  = {Annali della Scuola Normale Superiore di Pisa. Classe di Scienze. Serie V},
  title    = {A stochastic approach to enhanced diffusion},
  year     = {2021},
  issn     = {0391-173X},
  number   = {2},
  pages    = {811--834},
  volume   = {22},
  doi      = {10.2422/2036-2145.201911_013},
  language = {English},
  zbl      = {1491.35351},
  zbmath   = {7417788},
}

@article{elgindi2025,
  title={Optimal enhanced dissipation and mixing for a time-periodic, Lipschitz velocity field on $\mathbb{T}^2$},
  author={Elgindi, T. M. and Liss, K. and Mattingly, J. C.},
  journal={Duke Mathematical Journal},
  volume={174},
  number={7},
  pages={1209--1260},
  year={2025},
  doi = {10.1215/00127094-2024-0057}
}

@article{Bedrossian2021,
  author    = {Bedrossian, J. and Blumenthal, A. and Punshon-Smith, S.},
  title     = {Almost-sure enhanced dissipation and uniform-in-diffusivity exponential mixing for advection-diffusion by stochastic Navier-Stokes},
  journal   = {Probability Theory and Related Fields},
  volume    = {179},
  number    = {3--4},
  pages     = {777--834},
  year      = {2021},
  doi       = {10.1007/s00440-020-01010-8},
  mrnumber  = {4242626}
}

@article{cooperman2025,
  title={A Harris theorem for enhanced dissipation, and an example of Pierrehumbert},
  author={Cooperman, W. and Iyer, G. and Son, S.},
  journal={arXiv preprint arXiv:2403.19858},
  year={2025},
  doi = {10.48550/arXiv.2403.19858}
}

@article{Gess2025,
  author    = {Gess, B. and Yaroslavtsev, I.},
  title     = {Stabilization by transport noise and enhanced dissipation in the Kraichnan model},
  journal   = {Journal of Evolution Equations},
  volume    = {25},
  pages     = {42},
  year      = {2025},
  doi       = {10.1007/s00028-025-01066-w}
}

@article{Seis2025,
  title={Exponential mixing by random cellular flows},
  author={Navarro-Fernández, V. and Seis, C.},
  journal={arXiv preprint arXiv:2502.17273},
  year={2025},
  doi = {10.48550/arXiv.2502.17273}
}

@article{mazzucato2026,
  title={Enhanced dissipation by advection and applications to PDEs},
  author={Mazzucato, A. L. and Feng, Y. and Nobili, C.},
  journal={Applied Mathematical Modelling},
  volume={150},
  pages={116310},
  year={2026},
  doi = {10.1016/j.apm.2025.116310}
}

@article{liu2004,
title={Strange eigenmodes and decay of variance in the mixing of diffusive tracers},
author={Liu, W. and Haller, G.},
journal={Physica D: Nonlinear Phenomena},
volume={188},
number={1},
pages={1--39},
year={2004},
doi = {10.1016/S0167-2789(03)00287-2}
}

@article{vanneste2007,
title={Fast scalar decay in a shear flow: modes and pseudomodes},
author={Vanneste, J. and Byatt-Smith, J. G.},
journal={Journal of Fluid Mechanics},
volume={572},
pages={219--229},
year={2007},
doi = {10.1017/S0022112006003661}
}

@article{giona2004,
title={Universality and imaginary potentials in advection--diffusion in closed flows},
author={Giona, M. and Cerbelli, S. and Vitacolonna, V.},
journal={Journal of Fluid Mechanics},
volume={513},
pages={221--237},
year={2004},
doi = {10.1017/S002211200400984X}
}

@article{reddy1994,
title={Pseudospectra of the Convection-Diffusion Operator},
author={Reddy, S. C. and Trefethen, L. N.},
journal={SIAM Journal on Applied Mathematics},
volume={54},
number={6},
pages={1634--1649},
year={1994},
doi = {10.1137/S0036139993246982}
}

@article{benthaus2026,
  title={Enhanced dissipation via time-modulated velocity fields},
  author={Benthaus, J. and Nobili, C.},
  journal={Evolution Equations and Control Theory},
  volume={15},
  pages={21--50},
  year={2026},
  doi = {10.3934/eect.2025051}
}

@article{batchelor1959,
title={Small-scale variation of convected quantities like temperature in turbulent fluid: Part 1. General discussion and the case of small conductivity},
author={Batchelor, G. K.},
journal={Journal of Fluid Mechanics},
volume={5},
number={1},
pages={113--133},
year={1959},
doi = {10.1017/S002211205900009X}
}

@misc{tao2007,
  title={MATH 247B: Fourier analysis},
  author={Tao, T.},
  howpublished={Course notes, University of California, Los Angeles},
  year={2007},
  url = {https://www.math.ucla.edu/~tao/247b.1.07w/}
}

@article{he2025,
  title={Time-dependent flows and their applications in parabolic-parabolic Patlak-Keller-Segel systems Part I: Alternating flows},
  author={He, S.},
  journal={Journal of Functional Analysis},
  volume={288},
  number={5},
  pages={110786},
  year={2025},
  doi = {10.1016/j.jfa.2024.110786}
}
\end{document}